    \numberwithin{equation}{section}
    \numberwithin{table}{section}
    \numberwithin{figure}{section}
\newtheorem{thm}{Theorem}[section]
\newtheorem{lemma}{Lemma}[section]
\newtheorem{rem}{Remark}[section]
\theoremstyle{definition}
\newtheorem{defn}{\bf Definition}[section]
\newcommand{\re}[1]{(\ref{#1})}
\def \ri {{\rm i}}
\def \af {\alpha}
\def \swf {\bs\psi^{\alpha,n}}
\def \bx{\bs x}
\def \bY{\bs Y}
\def\Div{\nabla\cdot}
\def\Curl{\nabla\times}
\def\e{\mathrm{e}}
\def\d{\mathrm{d}}
\def\RR{\mathbb{R}}
\def\sph{\mathbb{S}}
\def\ball{\mathbb{B}}
\def\s{\sigma}
\def\NN{\mathbb{N}}
\def\tr{\mathsf{t}}
\def\P{{\bs   P}}
\newcommand{\bs}[1]{\boldsymbol{#1}}
\begin{document}

\graphicspath{{figs/}}

\title[divergence free ball PSWFs]
{Vectorial ball Prolate spheroidal wave functions   \\
 with the divergence free constraint}
\author[
	J. Zhang,\; H. Li,
	]{
		\;\; Jing Zhang${}^1$, \;\; Guidoum Ikram${}^{1}$ \;\; Huiyuan Li${}^{2}$
		}
	\thanks{${}^1$School of Mathematics and Statistics \& Hubei Key Laboratory of Mathematical Sciences,  Central China Normal University, Wuhan 430079, China. The work of the first and the second authors  is partially supported by the National Natural Science Foundation of China (NSFC 11671166) and the Fundamental Research Funds for the Central Universities (CCNU19TS033, CCNU19TD010).\\
		\indent ${}^{2}$State Key Laboratory of Computer Science/Laboratory of Parallel Computing,  Institute of Software, Chinese Academy of Sciences, Beijing 100190, China. Email: huiyuan@iscas.ac.cn. The research of the third author is partially   supported by the National Natural Science Foundation of China (NSFC 11871455 and NSFC 11971016).\\
}
\keywords{Generalized prolate spheroidal wave functions, arbitrary unit ball, Sturm-Liouville differential equation, finite Fourier transform,  Bouwkamp  spectral-algorithm}
 \subjclass[2010]{42B37, 33E30, 33C47, 42C05, 65D20, 41A10}




\begin{abstract}
 In this paper, we introduce one family of  vectorial  prolate spheroidal wave functions  of real order $\af>-1$ on the  unit ball in $\mathbb{R}^3$,
 which satisfy the divergence free constraint, thus are termed as divergence free vectorial ball PSWFs. They are  vectorial eigenfunctions of  an  integral operator related to the finite Fourier transform, and solve the  divergence free constrained maximum concentration problem in three dimensions, i.e.,  to what extent  can the total energy of a band-limited divergence free vectorial function  be concentrated on the unit ball?  Interestingly, any optimally concentrated  divergence free vectorial functions, when
represented in  series in  vector spherical harmonics,  shall be also concentrated in one of the three vectorial spherical harmonics modes. Moreover,
divergence free ball PSWFs are exactly the vectorial eigenfunctions of the second order Sturm-Liouville differential operator which  defines the scalar ball PSWFs.
Indeed,  the divergence free vectorial ball PSWFs possess a simple and close relation with the scalar ball PSWFs such that  they share the same merits.  Simultaneously, it turns out that the divergence free ball PSWFs solve  another  second order Sturm-Liouville eigen equation defined through the curl operator $\nabla\times $
instead of  the gradient operator $\nabla$.
\end{abstract}

\maketitle


\section{Introduction}
In the early 1960s, Slepian, Landau and Pollak answered an open question: to
what extent  are functions, which are confined to a finite bandwidth,
also concentrated in the time domain? (cf. \cite{Moo.M04,Slep61}). Any square integrable function $f(\xi)$  is bandlimited, if its Fourier transform $\psi(t)$ has a finite support
$[-c,c]$ such that
\begin{equation}\label{Fourierfinite}
f(\xi)=\int_{-1}^1 \psi(t)e^{\ri c\xi t}\,dt,\quad \xi\in(-\infty,\infty).
\end{equation}
The related issue is to what extent that the energy of such  $f(\xi)$ can be maximally concentrated on finite interval $I=(-1,1)$, that is,
\begin{align*}
 \max_{f} \Big\{{\displaystyle\int_{I} |f(\xi)|^2d\xi}\bigg/{\displaystyle\int_{\RR}
 |f(\xi)|^2\d{\xi}}\Big\}.
\end{align*}
The above problem is equivalent to
\begin{align*}
   \max_{\psi} \Big\{{\displaystyle \int_{I} \int_{I}  \frac{\sin c(x-t)}{\pi(x-t)}
 \psi(x)\cdot \overline{\psi(t)} \d{x} \d{t}}\bigg/{\displaystyle\int_{I} |\psi(t)| ^2\d t}\Big\}.
\end{align*}
It actually comes down to the study of the  following integral equation:
\begin{align}\label{Qpsi0}
\int_{I}\frac{\sin c(x-t)}{\pi(x-t)} \psi(t) \d {t} = \mu\,\psi(x),\;\;x\in I.
\end{align}
 Eigenfunctions, $\psi_n(x;c)$, $n=1,2,\dots$, of the integral equation \eqref{Qpsi0},   therein referred to as prolate
spheroidal wave functions (PSWFs),  are  discovered coincidentally to be  the eigenfunctions  of  an integral operator related to the finite Fourier transform:
\begin{equation}\label{pswf0int}
\lambda_n(c)\psi_n(x;c) =\int_{-1}^1 \e^{\ri cxt}\psi_n(t;c) \d t,\quad c>0,\;\;x\in I.
\end{equation}
 From this perspective,  PSWFs are initially defined as  the bandlimited functions most concentrated on the finite interval $I$. On the other hand, the PSWFs
 are exactly eigenfunctions of the second-order
singular Sturm-Liouville differential equation:
\begin{equation}\label{Pswfzero}
\partial_x \big((1-x^2)\partial_x \psi_n(x;c)\big)+\big(\chi_n(c)-c^2x^2\big)\psi_n(x;c)=0,\quad c>0,\quad x\in I,
\end{equation}
which naturally form an orthogonal basis of the $L^2$ space.

There has been abundant literature  addressing this research topic in more than 50 years past. (cf.\cite{Rokhlin.07,Rokhlin.12,Botezatu.16,Bonami.17,Landa.17}) Indeed, PSWFs of order zero and  multidimensional extensions (cf. \cite{Khalid.16, Simons.11,ZhangLi.18}) have
enjoyed applications in a wide range of science and engineering (cf. \cite{Jackson.91,Mathews.16,Thomson.76}). Notably, they are also well suited to approximate bandlimited functions, and have been proven to be a useful basis for spectral method, which  enjoy a much higher resolution for
highly oscillatory waves over the Legendre polynomial
based methods (cf. \cite{Boyed.04,Chen.05,ZhangWang.17}). These attractive properties have motivated us to use PSWFs as
basis functions in the study of  the acoustic wave equation  and of
the Maxwell system with large wave number, the two most common wave equations encountered in
physics or in engineering. It is well known that a physically realizable time-harmonic electromagnetic
field in a linear, isotropic, homogeneous medium must be divergence free (cf. \cite{Cockburn.05,Li.05,Brenner.07}).  One may ask whether there
are some kinds  of vectorial PSWFs that can be  optimally concentrated  within a given finite domain and  satisfy the divergence free constraint. i.e.,$\nabla\cdot {\bs\psi}(\bx)=0$.

To answer this question, we shall consider in this paper the concentration problem on the unit ball $\ball:=\big\{\bx\in \RR^3:\|\bx\|{ \leq}1\big\}$.
By extending the finite Fourier transform \eqref{Fourierfinite}  to  the one for vectorial functions on the unit ball $\ball,$
\begin{equation}\label{inbdlim}
\int_{\ball}\e^{-\ri c \langle \bx, \bs \tau\rangle}{\bs \psi}(\bs \tau;c)(1-|\bs \tau|^2)^{\af}\d\bs \tau=\lambda{\bs \psi}(\bx;c),\quad \bx\in\ball,\;\; c>0,\;\af>-1,
\end{equation}
we aim at  finding  some kinds
of  divergence free vectorial eigenfunctions of the above equation.
For this purpose, we first show that any divergence free vectorial function takes the form
\begin{equation*}
{\bs\psi}(\bx)=  \bx \times\nabla \phi(\bx)+ \nabla\times(\bx \times\nabla) \theta(\bx).
\end{equation*}
Further  we identify that all divergence free eigenfunctions of \eqref{inbdlim} are constituted only by  vectorial functions
 of  the form $\bx \times\nabla \phi(\bx)  $.
In the sequel,
with the  help of spherical harmonics $Y_{\ell}^n(\hat{\bx})$
in the spherical-polar coordinates $\bx = r\bs{\hat x}$ with $r\ge 0$ and $\bs{\hat x}\in \sph^{2}$, we  define the divergence free ball PSWFs as the band-limited vectorial functions
$$ \swf_{k,\ell}(\bx;c)= \bx \times\nabla [\phi^n_{\ell}(r;c) Y_\ell^{n}(\hat{\bx})]
=  \phi^n_{\ell}(r;c)\, \bx \times\nabla Y_\ell^{n}(\hat{\bx}), \quad 1\le\ell\le 2n+1,\,k,n\ge 0,$$ which satisfy the integral equation \eqref{inbdlim}
and are optimally concentrated on the ball.

 Recalling  that  vectorial spherical harmonics  fall into three types of  modes, $\hat{\bx} Y_{\ell}^n(\hat{\bx}) $, $r \nabla Y_{\ell}^n(\hat{\bx}) $ and $\bx \times\nabla  Y_\ell^{n}(\hat{\bx})  $,
we discover the interesting phenomenon that  any optimally concentrated   band limited and  divergence free
functions  will  certainly concentrate on the one of  the three mode types.

Simultaneously, it turns out that the divergence free ball PSWFs are exactly the eigenfunctions of the second order
Sturm-Liouville differential operator,
 \begin{equation}
 \begin{split}
\big[-(1-&\|\bx\|^2)^{-\af}\nabla\cdot \big((1-\|\bx\|^2)^{\af+1}\nabla\big)-\Delta_0+ c^2 \|\bx\|^2\big]{\bs \psi}(\bx; c)
\\
=\,&\chi\, {\bs \psi}(\bx; c),\qquad  \bx\in\ball, \  \alpha>-1,
\end{split}
\label{ball_pswf}
\end{equation}
where $\Delta_0 = (\bx \times \nabla) \cdot (\bx \times \nabla) $ is the Laplace-Beltrami operator.
This eigen-equation extends the differential property of the one dimensional PSWFs defined by  Slepian.
More astonishingly,   we find the divergence free ball PSWFs solve  the  following eigen-equation composed by the curl operator $\nabla\times $
instead of  the gradient operator $\nabla$,
 \begin{equation}
 \begin{split}
\big[(1-&\|\bx\|^2)^{-\af}\nabla\times \big((1-\|\bx\|^2)^{\af+1}\nabla\times \big)-\Delta_0+ c^2 \|\bx\|^2\big]{\bs \psi}(\bx; c)
\\
=\,&(\chi+2\alpha+2)\, {\bs \psi}(\bx; c),\qquad  \bx\in\ball, \  \alpha>-1.
\end{split}
\label{ball_pswf_curl}
\end{equation}

Moreover, we explore  their connections with scalar  ball PSWFs. The scalar  ball PSWFs, denoted by  $\psi_{k,\ell}^{\af,n}(\bx;c)$, $1\le \ell\le 2n+1,\,k,n\ge 0,$  are  bandlimited functions  share the same merit of divergence free vectorial ball PSWFs such that they are scalar eigenfunctions of the integral equation \eqref{inbdlim} and the differential equation \eqref{ball_pswf} (cf. \cite{ZhangLi.18}). As a result, in the spherical-polar coordinates, divergence free vectorial ball PSWFs
have the simple representation by the scalar ball PSWFs,
\begin{equation}\label{VPSWF}
\begin{split}
\swf_{k,\ell}(\bx;c)&=(\bx\times\nabla)\psi_{k,\ell}^{\af,n}(\bx;c), \quad 1\le  \ell\le 2n+1,\;\; k\in {\mathbb N}_0,\;n\in {\mathbb N}.
\end{split}
\end{equation}

We organise the remainder of the paper as follows. In Section \ref{Sect:2}, we collect some relevant properties of the  Jacobi Polynomials, spherical harmonics and
ball polynomials to be used throughout the paper. In Section \ref{sect3}, we define  the divergence free ball PSWFs as the vectorial eigenfunctions of the integral operators.
In Section \ref{sec4}, we study the  divergence free ball PSWFs as the vectorial eigenfunctions of the Sturm-Liouville differential equation on an  unit
ball and present their analytic properties. In Section \ref{sec5},  we describe an efficient method for computing the  divergence free ball PSWFs using the differential operator.
Flow field diagrams are presented to  illustrate  the geometrical properties of some  divergence free ball PSWFs.

\section{mathematical preliminary}\label{Sect:2}
In this section, we review the Jacobi Polynomials, and introduce the spherical harmonics and ball polynomials to facilitate the discussions in the forthcoming sections (cf.\cite{Dai2013,STW11}).
\subsection{Notations and vector calculus} We begin by introducing some conventions  and notations.
Denote by $\NN_0$ and $\NN$  the collection of nonnegative integers and  positive  integers, respectively.
 Let $\mathbb{R}^d$ ($d\in \NN$) be the  $d$-dimensional Euclidean space.
 Throughout this paper, we shall always use bold letters such as $\bx$ and $\bs y $ to denote column vectors.
For instance,  we write $\bx =
(x_1,x_2,\cdots,x_d)^{\tr}\in  \mathbb{R}^d$ as a column vector, where $(\cdot)^{\tr}$ denotes matrix or vector transpose. The inner product of $\bx,\bs y\in \mathbb{R}^d$ is denoted by $\bx\cdot\bs y$ or $\langle\bx,\bs y\rangle:=   \bx^{\tr} \bs y =\sum^d_{i=1} x_iy_i$, and the
norm of $\bx$ is denoted by $\|\bx\| := \sqrt{ \langle\bx, \bx\rangle}=\sqrt{\bx^{\tr}\bx}$.
The unit sphere $\mathbb{S}^{2}$ and the unit ball $\ball$ of $\mathbb{R}^3$ are respectively defined by
\begin{equation*}
\mathbb{S}^{2}:=\big\{\hat \bx\in \RR^3: \|\hat \bx\|=1\big\},\quad \ball:=\big\{\bx\in \RR^3: r=\|\bx\|{ \leq}1\big\}.
\end{equation*}
For   each $ \bx\in \RR^3$, we introduce its polar-spherical coordinates $(r,\hat \bx)$ such that $r=\|\bx\|$ and $ \bx =r\hat\bx:=r(\hat x_1,\hat x_2,\hat x_3)\in \mathbb{S}^{2}.$ Define the inner product of $L^2(\mathbb{S}^{2})$ as
\begin{equation*}
        ( f, g )_{\mathbb{S}^{2}}: = \int_{\mathbb{S}^{2}} f(\hat \bx ) g(\hat \bx) \d\s(\hat \bx),
\end{equation*}
where $d \s$ is the surface measure.

Define the spherical gradient operator $\nabla_0$ and the Laplace-Beltrami operator $\Delta_0$,
\begin{align}
\label{opert-1b}
& \nabla_0=\|\bx\|\, [\nabla-{\hat \bx}(\hat{\bx}\cdot\nabla)]=r\nabla - \bx\partial_r,
\\
\label{Lap-Bel}
& \Delta_0=\nabla_0 \cdot \nabla_0 = \|\bs x\|^2 \Delta - ({\bs x}\cdot\nabla)({\bs x}\cdot\nabla+1).
 \end{align}
 Indeed, $\nabla_0$ and $\Delta_0$ represent the spherical components of $\nabla$ and $\Delta$, respectively. Hence,
 \begin{align}
 \label{opert-1a} &\hat {\bs x} \cdot \nabla_0 = 0.
 \end{align}

Further, denote  by  ${\bs a} \times {\bs b}$  the cross product  of two vectors ${\bs a}, {\bs b} \in\RR^3$.
We   introduce the curl  operator ${\rm curl} = \nabla \times  $ and the divergence operator  ${\rm div} = \nabla \cdot $  for vectorial functions in $\RR^3$.
We are interested in the  vector calculus  involving
\begin{equation*}
\label{xnabla}
\bx \times \nabla =  ( x_2\partial_{x_3} -x_3\partial_{x_2},  x_3\partial_{x_1} -x_1\partial_{x_3}, x_1\partial_{x_2} -x_2\partial_{x_1} )^{\tr},
\end{equation*}
which is closely related to $\nabla_0$ and $\Delta_0$. The following two lemmas on $\bx \times \nabla$ will be used frequently  in the paper.
Their proofs  will be postponed to  Appendix \ref{AppFirst}.

\begin{lemma}
\label{calculus}
It holds that
\begin{subequations}
\begin{align}
\label{opert-1D}& {\bs x}\times \nabla =  -\nabla \times {\bs x} =\hat {\bs x}\times \nabla_0,
\\
\label{opert-1d}
& \nabla \cdot( {\bs x}\times \nabla) = {\bs x} \cdot( {\bs x}\times \nabla)= 0,
\\
\label{opert-1j}
&   ({\bx}  \times  \nabla) \cdot ( {\bs x}\times \nabla)=\Delta_0 ,
\\
\label{D0xn}
&\Delta_0(\bx\times\nabla)  = (\bx\times\nabla)\Delta_0.
\end{align}
\end{subequations}
\end{lemma}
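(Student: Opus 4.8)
The plan is to verify all four identities by a direct computation in Cartesian components, using the Einstein summation convention and the Levi--Civita symbol $\epsilon_{ijk}$, so that $(\bx\times\nabla)_i = \epsilon_{ijk}x_j\partial_k$. It is convenient to set $\bL := \bx\times\nabla$, with components $L_i = \epsilon_{ijk}x_j\partial_k$ regarded as a first-order differential operator, and to recall the contraction identity $\epsilon_{ijk}\epsilon_{ilm} = \delta_{jl}\delta_{km}-\delta_{jm}\delta_{kl}$.

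First I would dispatch \eqref{opert-1D}. For $\bx\times\nabla = -\nabla\times\bx$, apply $\nabla\times(\bx\,\cdot)$ to a scalar $f$: $(\nabla\times\bx)_i f = \epsilon_{ijk}\partial_j(x_k f) = \epsilon_{ijk}\delta_{jk}f + \epsilon_{ijk}x_k\partial_j f = -(\bx\times\nabla)_i f$, since $\epsilon_{ijk}\delta_{jk}=0$ and $\epsilon_{ijk}x_k\partial_j = -\epsilon_{ikj}x_k\partial_j$. For $\bx\times\nabla = \hat\bx\times\nabla_0$, substitute $\nabla_0 = r\nabla-\bx\partial_r$ from \eqref{opert-1b} and use $\hat\bx\times\bx = \hat\bx\times(r\hat\bx) = \bs 0$ together with $r\hat\bx = \bx$. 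Identity \eqref{opert-1d} is then immediate: $\bx\cdot(\bx\times\nabla) = \epsilon_{ijk}x_i x_j\partial_k = 0$ and $\nabla\cdot(\bx\times\nabla) = \epsilon_{ijk}\partial_i(x_j\partial_k) = \epsilon_{ijk}\delta_{ij}\partial_k + \epsilon_{ijk}x_j\partial_i\partial_k = 0$, both vanishing because $\epsilon_{ijk}$ is antisymmetric in a pair of indices on which the remaining factor is symmetric.

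For \eqref{opert-1j} I would expand $(\bx\times\nabla)\cdot(\bx\times\nabla) = \epsilon_{ijk}\epsilon_{ilm}\,x_j\partial_k(x_l\partial_m)$ and apply the contraction identity, obtaining $x_j\partial_k(x_j\partial_k) - x_j\partial_k(x_k\partial_j)$. Carrying out the product rule, using $\partial_k x_k = 3$ and the elementary relation $x_j x_k\partial_j\partial_k = (\bx\cdot\nabla)^2 - (\bx\cdot\nabla)$, the first term equals $(\bx\cdot\nabla) + \|\bx\|^2\Delta$ and the second equals $(\bx\cdot\nabla)^2 + 2(\bx\cdot\nabla)$. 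Subtracting yields $\|\bx\|^2\Delta - (\bx\cdot\nabla)\big((\bx\cdot\nabla)+1\big)$, which is precisely $\Delta_0$ by the definition \eqref{Lap-Bel}.

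Finally, for the commutation \eqref{D0xn} I would first establish the bracket relation $[L_i,L_j] = -\epsilon_{ijk}L_k$: from $[x_a\partial_b, x_c\partial_d] = \delta_{bc}x_a\partial_d - \delta_{ad}x_c\partial_b$ (the two second-order terms cancelling) and another application of the contraction identity, one reads this off directly. Then, using $\Delta_0 = L_kL_k$ from \eqref{opert-1j}, $[\Delta_0, L_i] = L_k[L_k,L_i] + [L_k,L_i]L_k = -\epsilon_{kim}\big(L_kL_m + L_mL_k\big) = 0$, since $L_kL_m+L_mL_k$ is symmetric in $(k,m)$ while $\epsilon_{kim}$ is antisymmetric. (Alternatively, one may use \eqref{Lap-Bel} and check that $\bL$ commutes separately with $\Delta$, with multiplication by $\|\bx\|^2$, and with $\bx\cdot\nabla$, the first by rotation invariance of the Laplacian and the last two by the same antisymmetry underlying \eqref{opert-1d}.) The only real obstacles are computational: the cancellation of the second-derivative contributions in \eqref{opert-1j} and the verification of $[L_i,L_j] = -\epsilon_{ijk}L_k$ needed for \eqref{D0xn}; everything else is routine bookkeeping with $\epsilon_{ijk}$.
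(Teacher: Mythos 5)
Your computations are all correct, and for \eqref{opert-1D}, \eqref{opert-1d} and \eqref{opert-1j} you are doing essentially what the paper does --- the paper verifies \eqref{opert-1D} by the same substitution $\nabla_0=r\nabla-\bx\partial_r$ and $\hat\bx\times\bx=\bs 0$, gets \eqref{opert-1d} from the same antisymmetry (phrased as the analogy with $\bs a\cdot(\bs a\times\bs b)=0$), and reduces \eqref{opert-1j} to $\|\bx\|^2\Delta-(\bx\cdot\nabla)^2-\bx\cdot\nabla$ exactly as you do; your Levi--Civita bookkeeping just makes these steps fully explicit. The genuine divergence is in \eqref{D0xn}. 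The paper does \emph{not} use the bracket relation $[L_i,L_j]=-\epsilon_{ijk}L_k$: it instead writes $\Delta_0=\|\bx\|^2\Delta-(\bx\cdot\nabla)(\bx\cdot\nabla+1)$ via \eqref{Lap-Bel} and shows separately that $\bx\times\nabla$ commutes with $\bx\cdot\nabla$, with multiplication by $\|\bx\|^2$, and with $\Delta$ (their \eqref{commute1}--\eqref{commute3}) --- precisely the route you relegate to your parenthetical alternative. Your primary argument, identifying $\Delta_0=L_kL_k$ as the Casimir of the $\mathfrak{so}(3)$ algebra generated by the $L_i$ and killing the commutator by the symmetry/antisymmetry mismatch in $\epsilon_{kim}(L_kL_m+L_mL_k)$, is slicker and more conceptual; what the paper's decomposition buys instead is that the three intermediate identities \eqref{commute1}--\eqref{commute3} are reused later (they are cited in the proof of Lemma 4.1 to establish $\mathscr{L}_{c,\bx}^{(\af)}(\bx\times\nabla)=(\bx\times\nabla)\mathscr{L}_{c,\bx}^{(\af)}$), so proving them here is not wasted work. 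Either way the lemma stands; if you adopt your Lie-algebraic route you would still want \eqref{commute1}--\eqref{commute3} on record for the later sections.
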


\begin{lemma}
\label{calculus2}
It holds that
\begin{subequations}
\begin{align}
\label{opert-1g} & \bx \times( {\bs x}\times \nabla) = \bs x  (\bs x \cdot \nabla ) -\|\bx \|^2\, \nabla = -\|\bx \|\,\nabla_0,
\\
\label{opert-1h}
 & \nabla \cdot (\bx \times( {\bs x}\times \nabla)) = -\Delta_0   ,
\\
\label{opert-1E} &\nabla \times( {\bs x}\times \nabla) = \bs x \Delta - (\bs x \cdot \nabla +2) \nabla,
\\
\label{opert-1i}
&\nabla \cdot(\nabla \times( {\bs x}\times \nabla))=0 ,
\\
\label{opert-1c}
&\bx\cdot(\nabla \times( {\bs x}\times \nabla))=\Delta_0.
\end{align}
\end{subequations}
\end{lemma}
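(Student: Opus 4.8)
I would read every identity as an equality of differential operators acting on an arbitrary smooth scalar field $f$, adopting throughout the convention that in an expression of the form $\bs a\times(\bs b\times\nabla)$ the gradient $\nabla$ stands rightmost, so that it differentiates $f$. The only point that needs care is operator ordering: I would apply the vector triple product identity in the ordered form $\bs a\times(\bs b\times\nabla)=\bs b\,(\bs a\cdot\nabla)-(\bs a\cdot\bs b)\,\nabla$, where $\bs a\cdot\nabla=\sum_j a_j\partial_j$ is the scalar operator that acts first and the vector $\bs b$ multiplies afterwards, and I would expand each product of first--order operators by the Leibniz rule before contracting Levi--Civita symbols through $\sum_k\epsilon_{ijk}\epsilon_{klm}=\delta_{il}\delta_{jm}-\delta_{im}\delta_{jl}$.

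First I would establish the two ``generating'' identities \eqref{opert-1g} and \eqref{opert-1E}. For \eqref{opert-1g}, setting $\bs a=\bs b=\bx$ in the ordered triple product gives $\bx\times(\bx\times\nabla)=\bx\,(\bx\cdot\nabla)-\|\bx\|^2\nabla$; substituting $\bx\cdot\nabla=r\partial_r$ and $\nabla_0=r\nabla-\bx\partial_r$ from \eqref{opert-1b} makes the right--hand side collapse to $r\bx\partial_r-r^2\nabla=-r\nabla_0=-\|\bx\|\,\nabla_0$. (Equivalently, one may write $\bx\times\nabla=\hat\bx\times\nabla_0$ by \eqref{opert-1D} and $\bx=r\hat\bx$, apply the rule to $r\,\hat\bx\times(\hat\bx\times\nabla_0)$, and note that the surviving term $\hat\bx(\hat\bx\cdot\nabla_0)$ vanishes by \eqref{opert-1a}.) For \eqref{opert-1E} I would work componentwise: $[\nabla\times(\bx\times\nabla)]_i=\sum_{j,k,l,m}\epsilon_{ijk}\epsilon_{klm}\,\partial_j(x_l\partial_m)=\sum_j\big(\partial_j(x_i\partial_j)-\partial_j(x_j\partial_i)\big)$, and expanding the two operator products by the Leibniz rule and collecting terms gives $x_i\Delta-(\bx\cdot\nabla+2)\partial_i$, which is exactly the $i$th component of $\bx\Delta-(\bx\cdot\nabla+2)\nabla$.

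The remaining three identities then follow quickly. Identity \eqref{opert-1i} is just the vanishing of the divergence of a curl, applied to the smooth field $(\bx\times\nabla)f$. For \eqref{opert-1c} I would dot \eqref{opert-1E} with $\bx$ and use the operator identity $\sum_{i,j}x_ix_j\partial_i\partial_j=(\bx\cdot\nabla)^2-(\bx\cdot\nabla)$ to obtain $\bx\cdot(\nabla\times(\bx\times\nabla))=\|\bx\|^2\Delta-(\bx\cdot\nabla)(\bx\cdot\nabla+1)$, which is $\Delta_0$ by \eqref{Lap-Bel}. For \eqref{opert-1h} I would either take $\nabla\cdot$ of \eqref{opert-1g} directly, using $\nabla\cdot(\bx(\bx\cdot\nabla))=(\bx\cdot\nabla)^2+3(\bx\cdot\nabla)$ and $\nabla\cdot(\|\bx\|^2\nabla)=\|\bx\|^2\Delta+2(\bx\cdot\nabla)$ and then \eqref{Lap-Bel}, or more slickly invoke $\nabla\cdot(\bx\times\bs v)=\bs v\cdot(\nabla\times\bx)-\bx\cdot(\nabla\times\bs v)=-\bx\cdot(\nabla\times\bs v)$ with $\bs v=(\bx\times\nabla)f$ together with \eqref{opert-1c}. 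I do not expect a genuine obstacle here: the content is elementary vector calculus, and the only thing that can go wrong is inconsistent bookkeeping of which symbols are multiplications and which are derivatives when expanding operator products and contracting $\epsilon$--symbols; the relations \eqref{opert-1b}--\eqref{Lap-Bel} and Lemma \ref{calculus} supply everything else.
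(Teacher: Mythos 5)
Your proposal is correct and follows essentially the same route as the paper: establish \eqref{opert-1g} via the ordered triple-product identity and reduce it with \eqref{opert-1b}, establish \eqref{opert-1E} by an elementary expansion (the paper invokes the double-curl identity $\nabla\times\nabla\times=\nabla\,\nabla\cdot-\Delta$ applied to $-\nabla\times\nabla\times\bx$ where you contract Levi--Civita symbols, but the outcome and bookkeeping are identical), and then derive \eqref{opert-1h}, \eqref{opert-1i} and \eqref{opert-1c} from these together with \eqref{Lap-Bel}. The only cosmetic differences are that the paper obtains \eqref{opert-1h} by computing $\nabla\cdot(-r\nabla_0)$ in the splitting $\nabla=\tfrac1r\nabla_0+\hat\bx\,\partial_r$ and verifies \eqref{opert-1i} by an explicit divergence of $\bx\Delta-(\bx\cdot\nabla+2)\nabla$ rather than by citing $\nabla\cdot\nabla\times=0$ directly; your alternatives are valid and, for \eqref{opert-1i}, slightly cleaner.
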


The following lemma is a direct consequence of Proposition 1.8.4 in  \cite{Dai2013}.
\begin{lemma}\label{lemInbyP}
For $f,g\in C^1(\sph^{d-1})$ and $1\leq i\neq j\leq d$,
\begin{equation}\label{InbyP}
\int_{\sph^{d-1}}f(\hat  \bx) [\hat \bx \times \nabla_0 g(\hat \bx)]d\sigma(\hat \bx)=-\int_{\sph^{d-1}} [\hat \bx \times \nabla_0 f(\hat \bx)] g(\hat \bx)d\sigma(\hat \bx).
\end{equation}
\end{lemma}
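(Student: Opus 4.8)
The plan is to deduce the vector identity \eqref{InbyP} from the scalar skew-adjointness of the elementary angular derivatives. By \eqref{opert-1D} the operator $\hat\bx\times\nabla_0$ agrees with $\bx\times\nabla$, whose Cartesian entries are, up to sign, the first-order operators $D_{i,j}:=\hat x_i\,\partial_{\hat x_j}-\hat x_j\,\partial_{\hat x_i}$ with $1\le i\ne j\le d$ (in $\RR^3$ these are exactly the three cyclic pairs). Thus it suffices to prove, for each such pair and all $f,g\in C^1(\sph^{d-1})$,
\begin{equation*}
\int_{\sph^{d-1}} (D_{i,j}f)(\hat\bx)\, g(\hat\bx)\, d\sigma(\hat\bx) = -\int_{\sph^{d-1}} f(\hat\bx)\,(D_{i,j}g)(\hat\bx)\, d\sigma(\hat\bx),
\end{equation*}
which is precisely the content of Proposition 1.8.4 in \cite{Dai2013}; feeding this back componentwise into $\hat\bx\times\nabla_0$ then yields \eqref{InbyP}.

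For completeness I would also record the short self-contained argument behind the scalar identity. First, $D_{i,j}$ is tangential to the sphere, since $D_{i,j}\|\bx\|^2 = 2(\hat x_i\hat x_j-\hat x_j\hat x_i)=0$; hence it acts intrinsically on $C^1(\sph^{d-1})$ and, being a first-order operator, obeys the Leibniz rule $D_{i,j}(fg)=(D_{i,j}f)g+f(D_{i,j}g)$. Second, $\int_{\sph^{d-1}} D_{i,j}F\,d\sigma = 0$ for every $F\in C^1(\sph^{d-1})$: writing $R_{i,j}(t)$ for the rotation by angle $t$ in the $(x_i,x_j)$-plane one has $D_{i,j}F(\hat\bx)=\tfrac{d}{dt}\big|_{t=0}F(R_{i,j}(t)\hat\bx)$, and since $d\sigma$ is invariant under $R_{i,j}(t)$ the map $t\mapsto\int_{\sph^{d-1}}F(R_{i,j}(t)\hat\bx)\,d\sigma(\hat\bx)$ is constant; differentiating at $t=0$ (legitimate by compactness of $\sph^{d-1}$ and $F\in C^1$) gives the claim. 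Applying this with $F=fg$ and invoking the Leibniz rule produces exactly the displayed skew-adjointness.

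The passage back to \eqref{InbyP} is then pure bookkeeping: the $m$-th component of $\hat\bx\times\nabla_0$ is $\pm D_{i,j}$ for the appropriate index pair, so the scalar identity applied in each component and reassembled into a vector is the assertion. I do not expect any genuine obstacle; the only point that is not entirely mechanical is (i) checking, via \eqref{opert-1D} and the tangency $D_{i,j}\|\bx\|^2=0$, that one may legitimately work intrinsically on $\sph^{d-1}$ with functions that need not extend off the sphere, together with (ii) the differentiation-under-the-integral step in the rotation-invariance argument — both routine. If one is content to cite \cite{Dai2013} for the scalar case, the proof is immediate.
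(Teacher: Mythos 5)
Your proposal is correct and follows exactly the route the paper takes: the paper offers no proof beyond stating that the lemma is a direct consequence of Proposition 1.8.4 in the cited reference, which is precisely the componentwise scalar skew-adjointness of the angular derivatives $D_{i,j}$ that you invoke and reassemble into the cross-product identity. Your additional self-contained verification of the scalar fact via tangency, the Leibniz rule, and rotation invariance of $d\sigma$ is a sound filling-in of that citation, not a different method.
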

%
\subsection{Jacobi Polynomials} We now briefly review some relevant properties of Jacobi Polynomials.
For real $\af,\beta>-1$, the normalized Jacobi polynomials, denoted by $\{J_{k}^{(\af,\beta)}(\eta)\}_{k\ge 0},$  are orthonormal with respect to the
Jacobi weight function $\omega^{\af,\beta}(\eta)=(1-\eta)^{\af}(1+\eta)^{\beta}$ over $I:=(-1,1)$,
\begin{equation}\label{Jacobiorth}
\int_{-1}^{1} {J}_{k}^{(\af,\beta)}(\eta){J}_{l}^{(\af,\beta)}(\eta)\omega_{\af,\beta} (\eta)
\d{\eta}=2^{\alpha+\beta+2}\delta_{kl}.
\end{equation}
They satisfy  the
three-term recurrence relation:
\begin{equation}\label{Jacobi}
\begin{split}
&\eta {J}_k^{(\af,\beta)}(\eta)=a_k^{(\af,\beta)} {J}_{k+1}^{(\af,\beta)}(\eta)+b_k^{(\af,\beta)} {J}_{k}^{(\af,\beta)}(\eta)+a_{k-1}^{(\af,\beta)} {J}_{k-1}^{(\af,\beta)}(\eta),
\\
& J_{0}^{(\af,\beta)}(\eta)=\frac{1}{h^{(\alpha,\beta)}_0},
\quad
J_{1}^{(\af,\beta)}(\eta)=  \frac{1}{2h^{(\alpha,\beta)}_1}
  \big( (\af+\beta+2)\eta+(\af-\beta)\big),
\end{split}
\end{equation}
where $\eta\in I$, and
\begin{align*}
&a_k^{(\af,\beta)}=\sqrt{ \frac{4(k+1)(k+\alpha+1)(k+\beta+1)(k+\alpha+\beta+1)}{(2k+\alpha+\beta+1)(2k+\alpha+\beta+2)^2(2k+\alpha+\beta+3) } }
,
\\
&b_k^{(\af,\beta)} = \frac{\beta^2-\alpha^2}{(2k+\alpha+\beta)(2k+\alpha+\beta+2)},
  \\
  & h_k^{(\af,\beta)} = \sqrt{\frac{\Gamma(k+\alpha+1)\Gamma(k+\beta+1)}{2(2k+\alpha+\beta+1) \Gamma(k+1) \Gamma(k+\alpha+\beta+1)}   }.
\end{align*}
The Jacobi polynomials are the eigenfunctions of the
Sturm-Liouville problem
\begin{equation*}
\mathscr{L}_{\eta}^{(\af,\beta)} J_{k}^{(\af,\beta)}(\eta):=
-\frac{1}{\omega_{\af,\beta}(\eta)}
\partial_{\eta}\big(\omega_{\af+1,\beta+1}(\eta)\partial_{\eta}J_{k}^{(\af,\beta)}(\eta)\big)=
 \lambda_k^{(\af,\beta)}J_k^{(\af,\beta)}(\eta),\quad  \eta\in I,
\end{equation*}
and the corresponding  eigenvalues are $\lambda_k^{(\af,\beta)}=k(k+\af+\beta+1).$

\subsection{Spherical harmonics and ball polynomials}
We first  define  the trivariate  harmonic polynomials  of total degree $n\in \NN_0$ through  the spherical coordinates
$\bx=(r\sin \theta \cos \phi, r\sin \theta \sin \phi, r\cos \theta)^{\tr}$,
\begin{align*}
 Y^n_1(\bx) = \frac{r^n}{\sqrt{8\pi}}J^{(0,0)}_{n} (\cos \theta ), \qquad  Y^n_{2\ell}(\bx) &= \frac{ r^n}{2^{\ell+1} \sqrt{ \pi}} (\sin\theta)^{\ell}  J^{(\ell,\ell)}_{n-\ell} (\cos \theta ) \cos \ell \phi, \quad   1\le \ell \le n,
 \\
Y^n_{2\ell+1}(\bx)  &=  \frac{ r^n}{2^{\ell+1} \sqrt{ \pi}}  (\sin\theta)^{\ell}  J^{(\ell,\ell)}_{n-\ell} (\cos \theta ) \sin \ell \phi, \quad 1\le  \ell \le  n.
\end{align*}

Indeed,  a polynomial  $Y$ is referred to as a harmonic polynomial of (total) degree $n$ if  $Y$ is homogeneous of degree $n$ and satisfies  the harmonic equation  (cf. \cite{Dai2013}),
\begin{equation*}
 Y(\bx) = r^n Y(\hat \bx), \qquad   \Delta Y(\bx) =0.
\end{equation*}

Obviously,  harmonic polynomials  $Y_\ell^n$, $ 1\le \ell\le 2n+1,\, n\in \NN_0$,  are uniquely determined by their restrictions on the unit sphere, $Y_\ell^n |_{\mathbb{S}^2}$,
$ 1\le \ell\le 2n+1,\, n\in \NN_0$,  while the laters  are   exactly   the well-known spherical harmonics.
Hereafter for notational convenience,  we introduce the index set
 \begin{equation*}
 \Upsilon_{\!0}=\{(\ell,n)\in \NN_0^2\,:\, 1\le\ell\le 2n+1\},\qquad  \Upsilon=\{(\ell,n)\in \NN^2\,:\, 1\le\ell\le 2n+1\}.
 \end{equation*}
With a little abuse, we shall use the same notation $Y_\ell^n$   both for a  harmonic polynomial and for  its corresponding spherical harmonic function.

In spherical polar coordinates, the Laplace operator can be written as
\begin{equation*}
   \Delta = \frac{d^2}{d r^2} + \frac{2}{r} \frac{d}{dr} + \frac{1}{r^2} \Delta_0.
\end{equation*}
Thus, the spherical
harmonics are  eigenfunctions of the Laplace-Beltrami operator,
\begin{equation} \label{eq:LaplaceBeltrami}
        \Delta_0  Y_\ell^n(\hat \bx) = - n(n+1) Y_\ell^n(\hat \bx), \qquad  (\ell,n) \in \Upsilon_{\!0},
\end{equation}
In view of \eqref{eq:LaplaceBeltrami} and \eqref{Jacobiorth}, we have the orthogonality:
\begin{equation*}
 ( Y_\ell^n, Y_\iota^m)_{\mathbb{S}^{2}}=\delta_{nm} \delta_{\ell\iota},
 \quad  (\ell,n),(\iota,m) \in \Upsilon_{\!0}.
 \end{equation*}

With the spherical harmonics, for any $\alpha>-1$, we define the ball polynomials as
\begin{equation*}
P_{k,\ell}^{\af,n}(\bx)={J}_{k}^{(\af,n+\frac{1}2)}(2\|\bx\|^2-1) Y_{\ell}^{n}(\bx),
\quad \bx\in \ball, \;\;  (\ell,n) \in \Upsilon_{\!0},\;k\in {\mathbb N}_0.
\end{equation*}
Note that the total degree of  $P_{k,\ell}^{\af,n}(\bs x)$ is $n+2k$.
The ball polynomials are mutually orthogonal
with respect to the weight function $\varpi_{\af}(\bx):=(1-\|\bx\|^2)^{\af}$ (cf.  {\cite[Propostion 11.1.13]{Dai2013}}):
\begin{equation*}
(P_{k,\ell}^{\af,n},P_{j,\iota}^{\af,m} )_{\varpi_{\af}} =\delta_{nm}\delta_{kj}\delta_{\ell \iota},
 \quad (\ell,n),(\iota,m) \in \Upsilon_{\!0},\;\; k,j\in {\mathbb N}_0,
\end{equation*}
where the inner product $(\cdot,\cdot)_{\varpi_{\af}} $ is defined by
\begin{equation*}
(f,g)_{\varpi_{\af}}:=\int_{\ball}f(\bx)g(\bx)\varpi_{\af}(\bx)\,\d\bx.
\end{equation*}


\begin{lemma}[{\cite[Theorem 11.1.5]{Dai2013}}]\label{dxbop} The ball orthogonal polynomials 
are the eigenfunctions of the differential operator:
\begin{equation*}
\mathscr{L}_{\bx}^{(\af)}P_{k,\ell}^{\af,n}(\bs x):=\left(-\Delta+\nabla \cdot \bx(2\af+\bx\cdot\nabla)-6\af\right) P_{k,\ell}^{\af,n}(\bs x)=\gamma_{n+2k}^{(\af)}P_{k,\ell}^{\af,n}(\bs x),
\end{equation*}
where $\gamma_m^{(\af)}:=m(m+2\af+3).$
\end{lemma}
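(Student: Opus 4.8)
The plan is to separate variables in polar-spherical coordinates and thereby reduce the three-dimensional eigenvalue equation to the one-dimensional Jacobi Sturm--Liouville problem already recorded in the preliminaries.

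First I would rewrite $\mathscr{L}_{\bx}^{(\af)}$ in a more transparent form. Since $\nabla\cdot\bx=3$ on $\RR^3$, the product rule gives $\nabla\cdot\big(\bx(2\af+\bx\cdot\nabla)g\big)=6\af g+(2\af+3)(\bx\cdot\nabla)g+(\bx\cdot\nabla)^2g$, so the term $-6\af$ is absorbed and
\[
\mathscr{L}_{\bx}^{(\af)}g=-\Delta g+(\bx\cdot\nabla)^2g+(2\af+3)(\bx\cdot\nabla)g.
\]
Substituting the spherical identities $\bx\cdot\nabla=r\partial_r$ and $\Delta=\partial_r^2+\tfrac2r\partial_r+\tfrac1{r^2}\Delta_0$ turns this into $\mathscr{L}_{\bx}^{(\af)}=(r^2-1)\partial_r^2+\big((2\af+4)r-\tfrac2r\big)\partial_r-\tfrac1{r^2}\Delta_0$.

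Next I would write $P_{k,\ell}^{\af,n}(\bx)=R_{k,n}(r)\,Y_\ell^n(\hat\bx)$ with $R_{k,n}(r)=r^nJ_k^{(\af,n+\frac12)}(2r^2-1)$, apply the operator, and invoke $\Delta_0 Y_\ell^n=-n(n+1)Y_\ell^n$ from \eqref{eq:LaplaceBeltrami}; the spherical-harmonic factor drops out and the assertion becomes the purely radial identity
\[
(r^2-1)R_{k,n}''+\big((2\af+4)r-\tfrac2r\big)R_{k,n}'+\tfrac{n(n+1)}{r^2}R_{k,n}=\gamma_{n+2k}^{(\af)}R_{k,n}.
\]
I would then substitute $q(t)=J_k^{(\af,n+\frac12)}(t)$ with $t=2r^2-1$ and pull out the factor $r^n$: the terms proportional to $r^{n-2}$ cancel identically (so the apparent singularity at $r=0$ is spurious), the terms proportional to $r^nq$ contribute exactly $n(n+2\af+3)$, and, after replacing $r^2$ by $(t+1)/2$, the remainder collapses to
\[
4(t^2-1)q''+\big[(4n+4\af+10)\,t+(4\af-4n-2)\big]q'=\big(\gamma_{n+2k}^{(\af)}-n(n+2\af+3)\big)q.
\]
Dividing by $4$, this is precisely the Jacobi Sturm--Liouville equation $\mathscr{L}_t^{(\af,\,n+\frac12)}q=\lambda_k^{(\af,\,n+\frac12)}q$ with $\lambda_k^{(\af,\,n+\frac12)}=k(k+\af+n+\tfrac32)$ recorded in the preliminaries; hence it only remains to verify the elementary arithmetic identity $\gamma_{n+2k}^{(\af)}=4k(k+\af+n+\tfrac32)+n(n+2\af+3)=(n+2k)(n+2k+2\af+3)$.

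The whole argument is elementary; the only point needing care is the change-of-variables bookkeeping in the last step---carrying the $r^n$ prefactor through two differentiations and matching the $(r^2-1)$ radial weight against the $(1-t^2)$ Jacobi weight so that the coefficients land exactly on $\mathscr{L}_t^{(\af,\beta)}$ with $\beta=n+\tfrac12$ rather than on some shifted operator. Alternatively one may simply cite \cite[Theorem~11.1.5]{Dai2013}, but the separation-of-variables derivation above is self-contained given the material assembled in Section~\ref{Sect:2}.
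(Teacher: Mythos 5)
Your proof is correct, and it is worth noting that the paper itself offers no proof of this lemma at all: it is imported verbatim as \cite[Theorem~11.1.5]{Dai2013}. What you have written is therefore a genuine addition rather than a variant of the paper's argument. Your route is the natural one and is fully consistent with the machinery the paper does assemble: the reduction $\mathscr{L}_{\bx}^{(\af)}=-\Delta+(\bx\cdot\nabla)(\bx\cdot\nabla+2\af+3)$ is exactly the $c=0$ case of the identity \eqref{Lc} derived later in the paper, and your spherical form $(r^2-1)\partial_r^2+\big((2\af+4)r-\tfrac2r\big)\partial_r-\tfrac1{r^2}\Delta_0$ agrees with the third line of \eqref{Ldef} at $d=3$. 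I checked the bookkeeping in the radial reduction: with $R=r^nq(2r^2-1)$ one gets $R'=nr^{n-1}q+4r^{n+1}q'$ and $R''=n(n-1)r^{n-2}q+(8n+4)r^nq'+16r^{n+2}q''$, the $r^{n-2}q$ coefficient is $-n(n-1)-2n+n(n+1)=0$ as you assert, the zeroth-order contribution is $n(n-1)+(2\af+4)n=n(n+2\af+3)$, and the first- and second-order terms land precisely on $4$ times the Jacobi operator $\mathscr{L}_t^{(\af,\beta)}$ with $\beta=n+\tfrac12$, since $(\af+\beta+2,\af-\beta)=(\af+n+\tfrac52,\af-n-\tfrac12)$ matches $\tfrac14(4n+4\af+10,\,4\af-4n-2)$. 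The closing arithmetic $\gamma_{n+2k}^{(\af)}-n(n+2\af+3)=4k(k+\af+n+\tfrac32)$ also checks out. What your approach buys is a self-contained verification using only Section~\ref{Sect:2}; what the citation buys the authors is brevity and the general-$d$ statement of Dai--Xu, of which your computation is the $d=3$ instance.
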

The Sturm-Liouville operator $\mathscr{L}_{\bs x}^{(\af)}$ takes different
forms, which serve as
preparations for the study of vectorial ball polynomials in the forthcoming sections.
\begin{thm}[{\cite[Theorem 2.2]{Dai2013}}]\label{thmdx}
For $\alpha>-1$, it holds that
\begin{equation}
\begin{split}
\mathscr{L}_{\bx}^{(\af)} =& -(1-\|\bx\|^2)^{-\af}\nabla\cdot({\bs {\rm I}}-\bx\bx^{\tr})(1-\|\bx\|^2)^{\af}\nabla
\\
=& -(1-\|\bx\|^2)^{-\af} \nabla\cdot  (1-\|\bx\|^2)^{\af+1}\nabla - \Delta_0
\\
=&-(1-r^2)\partial^2r-\frac{d-1}{r}\partial r+(2\af+4)r\partial r-\frac{1}{r^2} \Delta_0,
\end{split}
\label{Ldef}
\end{equation}
where $\Delta_0$ is the spherical part of $\Delta$ and involves only derivatives in $\hat \bx.$
\end{thm}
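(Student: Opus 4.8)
The plan is to bring each of the three expressions in \eqref{Ldef}, as well as the defining formula for $\mathscr L_{\bx}^{(\af)}$ from Lemma \ref{dxbop}, to one common \emph{normal form} --- an operator written purely in terms of the commuting quantities $\Delta$, $\bx\cdot\nabla$, $(\bx\cdot\nabla)^2$ and $\|\bx\|^2$ --- after which the theorem reduces to matching coefficients. The only tools needed are: the Leibniz rule $\nabla\cdot(f\bs V)=\nabla f\cdot\bs V+f\,\nabla\cdot\bs V$ for a scalar $f$ and a vector field $\bs V$; the radial derivative $\nabla(1-\|\bx\|^2)^{\gamma}=-2\gamma(1-\|\bx\|^2)^{\gamma-1}\bx$; and $\nabla\cdot(\bx\,g)=d\,g+\bx\cdot\nabla g$ for a scalar $g$, here with $d=3$. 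From the last identity one gets $\nabla\cdot\!\big(\bx(\bx\cdot\nabla u)\big)=d\,(\bx\cdot\nabla)u+(\bx\cdot\nabla)^2u$, and since $\bx\bx^{\tr}\nabla u=\bx(\bx\cdot\nabla u)$ one also gets $\bx\cdot\!\big[(\bs I-\bx\bx^{\tr})\nabla u\big]=(1-\|\bx\|^2)(\bx\cdot\nabla)u$.

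First I expand $\mathscr L_{\bx}^{(\af)}u=-\Delta u+\nabla\cdot\!\big[\bx(2\af+\bx\cdot\nabla)u\big]-6\af u$. Applying the identities above, $\nabla\cdot\!\big[\bx(2\af+\bx\cdot\nabla)u\big]=2\af\big(3u+(\bx\cdot\nabla)u\big)+3(\bx\cdot\nabla)u+(\bx\cdot\nabla)^2u$, and the $6\af u$ produced here cancels the $-6\af u$; this is precisely where the dimension $d=3$ enters, via $6\af=2d\af$. Hence
\[
\mathscr L_{\bx}^{(\af)}=-\Delta+(\bx\cdot\nabla)^2+(2\af+3)(\bx\cdot\nabla),
\]
which I take as the target normal form. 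For the first line of \eqref{Ldef} I write $(\bs I-\bx\bx^{\tr})(1-\|\bx\|^2)^{\af}\nabla u=(1-\|\bx\|^2)^{\af}\big[\nabla u-\bx(\bx\cdot\nabla u)\big]$; the Leibniz rule splits $\nabla\cdot$ of this into a weight-derivative term $-2\af(1-\|\bx\|^2)^{\af}(\bx\cdot\nabla)u$ and a term $(1-\|\bx\|^2)^{\af}\big[\Delta u-3(\bx\cdot\nabla)u-(\bx\cdot\nabla)^2u\big]$, and multiplying by $-(1-\|\bx\|^2)^{-\af}$ recovers the normal form.

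For the second line, $\nabla\cdot\!\big[(1-\|\bx\|^2)^{\af+1}\nabla u\big]=-2(\af+1)(1-\|\bx\|^2)^{\af}(\bx\cdot\nabla)u+(1-\|\bx\|^2)^{\af+1}\Delta u$, so that
\[
-(1-\|\bx\|^2)^{-\af}\nabla\cdot\!\big[(1-\|\bx\|^2)^{\af+1}\nabla u\big]=2(\af+1)(\bx\cdot\nabla)u-\Delta u+\|\bx\|^2\Delta u;
\]
substituting $\|\bx\|^2\Delta=\Delta_0+(\bx\cdot\nabla)^2+(\bx\cdot\nabla)$ from \eqref{Lap-Bel} and then subtracting $\Delta_0$ again gives the normal form. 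For the third line, insert the polar-coordinate expressions $\Delta=\partial_r^2+\tfrac{d-1}{r}\partial_r+\tfrac1{r^2}\Delta_0$ and $\bx\cdot\nabla=r\partial_r$ (hence $(\bx\cdot\nabla)^2=r^2\partial_r^2+r\partial_r$) into the normal form and collect the $\partial_r^2$, $\partial_r$ and $\Delta_0$ contributions; this produces exactly $-(1-r^2)\partial_r^2-\tfrac{d-1}{r}\partial_r+(2\af+4)r\partial_r-\tfrac1{r^2}\Delta_0$.

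The argument is entirely mechanical and there is no genuine obstacle beyond careful bookkeeping. The two spots that demand attention are: the cancellation of the constant term in the first step, which relies on $6\af$ being exactly $2d\af$ with $d=3$ (in general dimension the additive constant in Lemma \ref{dxbop} would have to be $2d\af$ for the statement to hold verbatim); and using the Laplace--Beltrami identity \eqref{Lap-Bel} in the correct direction, namely to replace $\|\bx\|^2\Delta$ by $\Delta_0+(\bx\cdot\nabla)^2+(\bx\cdot\nabla)$, when reducing the second form.
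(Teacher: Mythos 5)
Your proposal is correct, and all the identities you invoke check out: the normal form you reach, $\mathscr L_{\bx}^{(\af)}=-\Delta+(\bx\cdot\nabla)^2+(2\af+3)(\bx\cdot\nabla)$, is exactly the operator the paper itself records later in the proof of Lemma \ref{Lxprop} (equation \eqref{Lc} with $c=0$), and your reduction of the second expression is essentially the same two-line computation performed there. The main difference is one of scope: the paper does not prove Theorem \ref{thmdx} at all, citing it from Dai--Xu, whereas you supply a complete and self-contained verification that also covers the first expression (with the projector $\bs I-\bx\bx^{\tr}$) and the polar-coordinate form, and you correctly isolate the one place where the ambient dimension matters, namely the cancellation of the constant $6\af=2d\af$ with $d=3$. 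One cosmetic remark: $\Delta$ and $\bx\cdot\nabla$ do not commute ($[\Delta,\bx\cdot\nabla]=2\Delta$), so calling them ``commuting quantities'' is inaccurate; this is harmless here because your argument never uses commutativity --- you bring every expression to the same fixed ordered combination of $\Delta$, $(\bx\cdot\nabla)^2$ and $\bx\cdot\nabla$ and compare --- but the wording should be fixed.
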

\subsection{Vector spherical harmonics.}\label{secP}
Now, we  introduce the definition of vectorial spherical harmonics (cf. \cite{Bar85}),
\begin{subequations}
\begin{align}
\label{y-1}&\bY_{\ell}^{n,1}(\hat{\bx})=\hat{\bx}Y_{\ell}^{n}(\hat{\bx}), \\
\label{y-2}&\bY_{\ell}^{n,2}(\hat{\bx})=r\nabla Y_{\ell}^{n}(\hat{\bx})=\nabla_0 Y_{\ell}^{n}(\hat{\bx}),\\
\label{y-3}&\bY_{\ell}^{n,3}(\hat{\bx})=\bx\times\nabla Y_{\ell}^{n}(\hat{\bx})=\hat{\bx}\times\nabla_0 Y_{\ell}^{n}(\hat{\bx}).
\end{align}
\end{subequations}
and the relations among
scalar and vector spherical harmonics (cf. \cite{Bar85}):
\begin{subequations}
\begin{align}
\label{div-1}  & \nabla\cdot(f(r) {\bs Y}_\ell^{n,1}(\hat{\bs x}))=(\partial_r+\frac{2}{r})f(r) Y_\ell^{n}(\hat{\bs x}), \\
\label{div-2} & \nabla\cdot(f(r) {\bs Y}_\ell^{n,2}(\hat{\bs x}))=-\frac{n(n+1)}{r} f(r) Y_\ell^{n}(\hat{\bs x}),\\
\label{div-3}&  \nabla\cdot(f(r) {\bs Y}_\ell^{n,3}(\hat{\bs x}))=0,
\end{align}
\end{subequations}
and
\begin{subequations}
\begin{align}
\label{curl-1} &\nabla\times(f(r) {\bs Y}_\ell^{n,1}(\hat{\bs x} ))=-\frac{1}{r}f(r) {\bs Y}_\ell^{n,3}(\hat{\bs x} ), \\
\label{curl-2} & \nabla\times(f(r) {\bs Y}_\ell^{n,2}(\hat{\bs x} ))=(\partial_r+\frac{1}{r})f(r) {\bs Y}_\ell^{n,3}(\hat{\bs x} ), \\
\label{curl-3} & \nabla\times(f(r) {\bs Y}_\ell^{n,3}(\hat{\bs x} )) = -\frac{n(n+1)}{r}f(r) {\bs Y}_\ell^{n,1}(\hat{\bs x} ) - (\partial_r+\frac{1}{r})f(r){\bs Y}_\ell^{n,2}(\hat{\bs x} ).
\end{align}
\end{subequations}
Moreover, it can be checked that that the vector spherical harmonics are orthogonal in
the same sense as the spherical harmonics,
\begin{eqnarray*}
 {\bs Y}_\ell^{n,1}(\hat{\bs x} )\cdot {\bs Y}_\ell^{n,2}(\hat{\bs x} )=0, \qquad {\bs Y}_\ell^{n,1}(\hat{\bs x} )\cdot {\bs Y}_\ell^{n,3}(\hat{\bs x} )=0, \qquad {\bs Y}_\ell^{n,2}(\hat{\bs x} )\cdot {\bs Y}_\ell^{n,3}(\hat{\bs x} )=0.
\end{eqnarray*}
Moreover, the following orthogonality  holds  for  $(\ell,n),(\iota,m) \in \Upsilon_{\!0}$ and $ 1\le i,j\le 3 $,
\begin{align*}
\int_{\mathbb{S}^{2}}{\bs Y}_\ell^{n,i}(\hat{\bs x} )\cdot {\bs Y}_\iota^{m,j}(\hat{\bs x} ) \d\s(\hat \bx) = [ \delta_{i,1}+ n(n+1) ( 1- \delta_{i,1})]
\delta_{ij}\delta_{\ell\iota}\delta_{nm}.
\end{align*}

Thanks to the above lemma, it is then straightforward to prove the following result.
\begin{thm}\label{deY3} The vector spherical harmonics $\bY_{\ell}^{n,3}(\hat{\bx})$ are eigenfunctions of $\Delta_0,$
 \begin{equation*}
\Delta_0\bY_{\ell}^{n,3}(\hat{\bx})=-n(n+1)\bY_{\ell}^{n,3}(\hat{\bx}),\quad \forall\; \hat{\bx}\in\mathbb{S}^{2}.
\end{equation*}
\end{thm}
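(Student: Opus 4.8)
The plan is to exploit the commutation relation \eqref{D0xn} from Lemma \ref{calculus}, namely $\Delta_0(\bx\times\nabla)=(\bx\times\nabla)\Delta_0$, together with the fact that the ordinary spherical harmonics $Y_\ell^n$ are eigenfunctions of the Laplace--Beltrami operator with eigenvalue $-n(n+1)$, as recorded in \eqref{eq:LaplaceBeltrami}. Since by definition \eqref{y-3} we have $\bY_\ell^{n,3}(\hat\bx)=\bx\times\nabla Y_\ell^n(\hat\bx)$, applying $\Delta_0$ and commuting it past $\bx\times\nabla$ should immediately yield the claim.

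Concretely, I would argue as follows. Fix $(\ell,n)\in\Upsilon_{\!0}$. Starting from \eqref{y-3} and using \eqref{D0xn},
\begin{equation*}
\Delta_0\bY_\ell^{n,3}(\hat\bx)=\Delta_0\big[(\bx\times\nabla)Y_\ell^n(\hat\bx)\big]
=(\bx\times\nabla)\big[\Delta_0 Y_\ell^n(\hat\bx)\big].
\end{equation*}
Now invoke \eqref{eq:LaplaceBeltrami}, $\Delta_0 Y_\ell^n(\hat\bx)=-n(n+1)Y_\ell^n(\hat\bx)$, and pull the scalar constant $-n(n+1)$ through the linear operator $\bx\times\nabla$:
\begin{equation*}
(\bx\times\nabla)\big[-n(n+1)Y_\ell^n(\hat\bx)\big]
=-n(n+1)\,(\bx\times\nabla)Y_\ell^n(\hat\bx)
=-n(n+1)\,\bY_\ell^{n,3}(\hat\bx).
\end{equation*}
Chaining these two displays gives $\Delta_0\bY_\ell^{n,3}(\hat\bx)=-n(n+1)\bY_\ell^{n,3}(\hat\bx)$ for all $\hat\bx\in\sph^2$, which is exactly the assertion.

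One small point deserving a remark is the interpretation of $\Delta_0$ and $\bx\times\nabla$ acting on a function defined only on $\sph^2$: strictly these operators are defined on functions on $\RR^3$, but by \eqref{opert-1D} we may rewrite $\bx\times\nabla=\hat\bx\times\nabla_0$ and $\Delta_0=\nabla_0\cdot\nabla_0$, both of which involve only tangential derivatives, so everything makes sense intrinsically on the sphere; equivalently one works with the degree-$n$ homogeneous harmonic extension $r^nY_\ell^n(\hat\bx)$ and notes that $\Delta_0$ and $\bx\times\nabla$ commute with multiplication by powers of $r$. I do not anticipate any genuine obstacle here: the entire content of the theorem is the commutation identity \eqref{D0xn}, which was already established (with proof deferred to Appendix \ref{AppFirst}), so the proof is a one-line consequence. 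If anything, the only thing to be careful about is that $\bx\times\nabla$ maps scalar functions to vector fields, so the ``eigenfunction'' statement is an identity of vector fields componentwise, but this is transparent once the linearity of $\bx\times\nabla$ is noted.
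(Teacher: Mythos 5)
Your proposal is correct and follows exactly the paper's own argument: apply \eqref{y-3}, commute $\Delta_0$ past $\bx\times\nabla$ via \eqref{D0xn}, and invoke \eqref{eq:LaplaceBeltrami}. The additional remark about interpreting the operators tangentially on $\sph^2$ is a reasonable clarification but not needed beyond what the paper already does.
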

\begin{proof}

Thanks to \eqref{D0xn}, we have
\begin{equation*}
\begin{split}
\Delta_0\bY_{\ell}^{n,3}(\hat{\bx})&\overset{\re{y-3}}=\Delta_0(\bx\times\nabla)Y_{\ell}^{n}(\hat{\bx})\overset{\re{D0xn}}=(\bx\times \nabla)\Delta_0Y_{\ell}^{n}(\hat{\bx})\\
&\overset{\eqref{eq:LaplaceBeltrami}}=-n(n+1)(\bx\times \nabla)Y_{\ell}^{n}(\hat{\bx})\overset{\re{y-3}}=-n(n+1)\bY_{\ell}^{n,3}(\hat{\bx}).
\end{split}
\end{equation*}
This gives the proof.
\end{proof}
\section{Divergence free ball PSWFs  as  vectorial eigenfunctions of  finite Fourier transform}\label{sect3}
In this section, one may answer this question: whether there
are some kinds  of  band-limited vectorial functions that can be optimally spatially-concentrated
within a given spatial domain and satisfy the divergence free constraint.

\subsection{Scalar ball PSWFs}
Let us first review briefly the  scalar version of this question in arbitrary dimensions.
The optimal concentration problem for scalar functions  is  shown to be closely related to  prolate spheroidal wave functions (PSWFs) of real order $\af>-1$  on the unit ball  (cf.  {\cite{ZhangLi.18}}), which are the eigenfunctions of  a compact (finite) Fourier integral operator ${\mathscr F}_{c}^{(\af)}:{L^2_{\varpi_{\alpha}}(\ball)}\rightarrow
{L^2_{\varpi_{\alpha}}(\ball)}$, defined by
\begin{equation}\label{bdlim}
{\mathscr F}_{c}^{(\af)}[\phi](\bx)=\int_{\ball}\e^{-\ri
c \langle \bx, \bs \tau\rangle}\phi(\bs \tau)\varpi_{\alpha}(\bs \tau)\d\bs \tau,\quad \bx\in\ball,\;\; c>0,\;\af>-1.
\end{equation}

\begin{defn}\label{BPSWFs} {\bf (Ball PSWFs).} For real $\alpha>-1$ and real $c\ge 0,$
the prolate spheroidal wave functions
on a $d$-dimensional unit  ball $\ball^d,$ denoted by $\big\{\psi^{\af,n}_{k,\ell}(\bx; c)\big\}_{(\ell,n)\in \Upsilon_{\!0}}^{k\in {\mathbb N}_0},$ are  eigenfunctions of the integral operator $\mathscr{F}_{c}^{(\af)}$ defined in \eqref{bdlim},
\begin{equation}\label{eigen1}
{\mathscr F}_c^{(\alpha)}[\psi^{(\af,n)}_{k,{\ell}}](\bx;c)={(-\ri)}^{n+2k}\lambda_{n,k}^{(\af)}(c)\, \psi^{(\af,n)}_{k,{\ell}}(\bx;c),\quad
\bx\in \ball,
\end{equation}
where $c$ is the bandwidth parameter and the   modulus of  eigenvalues
 $\big\{\lambda_{n,k}^{(\af)}(c)\big\}_{k,n\in {\mathbb N}_0}$  are arranged for fixed $n$ as
\begin{equation*}\label{orderth}
\lambda_{n,0}^{(\af)}(c)>\lambda_{n,1}^{(\af)}(c)>\cdots>\lambda_{n,k}^{(\af)}(c)>\cdots>0.\;
\end{equation*}

\end{defn}

Note that for $\af= 0$, ${\mathscr F}_{c}^{(0)}$ is reduced to the finite Fourier transform on the  ball.
And $\psi^{0,n}_{k,\ell}(\bx; c)$ are the band-limited functions  most concentrated on the unit ball.

We then define the associated integral operator
${\mathcal Q}_c^{(\af)}: {L^2_{\varpi_{\alpha}}(\ball)}\rightarrow {L^2_{\varpi_{\alpha}}(\ball)},$ defined by
\begin{equation*}
{\mathcal Q}_c^{(\af)}=({{\mathscr F}_c^{(\alpha)}})^{*}\circ {\mathscr F}_c^{(\alpha)},\quad
 c>0,\;\af>-1.
 \end{equation*}
One verifies that
\begin{lemma}[{\cite[Theorem 4.1]{ZhangLi.18}}]
\label{th:Q}
Let $c>0,\af>-1$ and $\phi\in L_{\varpi_{\af}}^2(\ball).$ Then we have
\begin{equation*}
{\mathcal Q}_c^{(\af)}\big[\phi\big](\bx)=\int_{\ball} {\mathcal K}_c^{(\af)}(\bx,\bs \tau)\phi(\bs \tau){\varpi_{\af}}(\bs \tau)\d\bs \tau,\quad
\bx\in \ball,
\end{equation*}
where
\begin{equation*}\label{propOpQ}
{\mathcal K}_c^{(\af)}(\bx,\bs \tau):
=\frac{(2\pi)^{\frac{3}{2}}}{(c\|\bs \tau-\bx\|)^{\frac{1}{2}}}\int_{0}^1s^{\frac{3}{2}}(1-s^2)^{\af}J_{\frac{1}{2}}(cs\|\bs \tau-\bx\|) \d{s}.
\end{equation*}
\end{lemma}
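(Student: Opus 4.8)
The plan is to compute the adjoint $(\mathscr{F}_c^{(\af)})^{*}$ with respect to the weighted inner product $(\cdot,\cdot)_{\varpi_\af}$ and then compose. For $\phi,\psi\in L^2_{\varpi_\af}(\ball)$, I would start from
\[
(\mathscr{F}_c^{(\af)}\phi,\psi)_{\varpi_\af}=\int_\ball\!\Big(\int_\ball \e^{-\ri c\langle\bx,\bs \tau\rangle}\phi(\bs \tau)\varpi_\af(\bs \tau)\,\d\bs \tau\Big)\overline{\psi(\bx)}\,\varpi_\af(\bx)\,\d\bx.
\]
Since the exponential has modulus $1$, $\varpi_\af\in L^1(\ball)$, and $\phi,\psi\in L^2_{\varpi_\af}(\ball)$, the double integral is absolutely convergent by Cauchy--Schwarz, so Fubini applies; exchanging the order of integration and conjugating identifies
\[
(\mathscr{F}_c^{(\af)})^{*}[\psi](\bs \tau)=\int_\ball \e^{\ri c\langle\bx,\bs \tau\rangle}\psi(\bx)\varpi_\af(\bx)\,\d\bx .
\]
Composing $\mathcal Q_c^{(\af)}=(\mathscr{F}_c^{(\af)})^{*}\circ\mathscr{F}_c^{(\af)}$ and invoking Fubini once more (same bound) yields $\mathcal Q_c^{(\af)}[\phi](\bx)=\int_\ball \mathcal K_c^{(\af)}(\bx,\bs \tau)\phi(\bs \tau)\varpi_\af(\bs \tau)\,\d\bs \tau$, where
\[
\mathcal K_c^{(\af)}(\bx,\bs \tau)=\int_\ball \e^{\ri c\langle\bs y,\bx-\bs \tau\rangle}\varpi_\af(\bs y)\,\d\bs y .
\]

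It remains to evaluate this last integral, which is the Fourier transform of the radial weight $\varpi_\af$ restricted to $\ball$, evaluated at $c(\bx-\bs \tau)$; in particular it depends only on $\rho:=\|\bx-\bs \tau\|=\|\bs \tau-\bx\|$. Passing to spherical-polar coordinates $\bs y=s\hat{\bs y}$ and integrating the plane wave over $\mathbb{S}^{2}$ first, I would use the elementary identity $\int_{\mathbb{S}^{2}}\e^{\ri t\langle\hat{\bs y},\hat{\bs \eta}\rangle}\,\d\sigma(\hat{\bs y})=4\pi\,\sin t/t$ (three-dimensional Funk--Hecke / Jacobi--Anger, or a one-variable integral over the polar angle), with $t=cs\rho$ and $\hat{\bs \eta}=(\bx-\bs \tau)/\rho$. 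This gives
\[
\mathcal K_c^{(\af)}(\bx,\bs \tau)=4\pi\int_0^1 (1-s^2)^\af\, s^2\,\frac{\sin(cs\rho)}{cs\rho}\,\d s .
\]
Finally, substituting the closed form $\sin z/z=\sqrt{\pi/(2z)}\,J_{\frac12}(z)$ and checking the constant $4\pi\sqrt{\pi/2}=(2\pi)^{3/2}$ turns this into exactly the asserted expression
\[
\mathcal K_c^{(\af)}(\bx,\bs \tau)=\frac{(2\pi)^{\frac{3}{2}}}{(c\|\bs \tau-\bx\|)^{\frac{1}{2}}}\int_{0}^1 s^{\frac{3}{2}}(1-s^2)^{\af}J_{\frac{1}{2}}(cs\|\bs \tau-\bx\|)\,\d s .
\]

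The argument is essentially routine; the only two points that deserve explicit care are (i) the two applications of Fubini, which are justified by the boundedness of the exponential kernels together with $\varpi_\af\in L^1(\ball)$ and Cauchy--Schwarz, and (ii) the sphere integral yielding the sinc factor. I do not anticipate any genuine obstacle. The content of the lemma is simply to make the kernel of $\mathcal Q_c^{(\af)}$ explicit, so that its symmetry, positivity, and compactness--hence the spectral decomposition underlying Definition \ref{BPSWFs}--become transparent.
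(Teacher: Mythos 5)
Your proposal is correct: the adjoint computation, the two applications of Fubini, the reduction of the kernel to the Fourier transform of $\varpi_\af$ over $\ball$ evaluated at $c(\bx-\bs\tau)$, the spherical integral $\int_{\sph^2}\e^{\ri t\langle\hat{\bs y},\hat{\bs\eta}\rangle}\d\sigma=4\pi\sin t/t$, and the constant check $4\pi\sqrt{\pi/2}=(2\pi)^{3/2}$ via $J_{1/2}(z)=\sqrt{2/(\pi z)}\sin z$ all go through. The paper itself gives no proof of this lemma --- it is imported verbatim from \cite[Theorem 4.1]{ZhangLi.18} --- and your derivation is the standard one specialized to $d=3$, so there is nothing to contrast; the only cosmetic point is that the paper's bilinear form $(\cdot,\cdot)_{\varpi_\af}$ is written without a conjugate, so you should say explicitly that the adjoint is taken in the Hermitian $L^2_{\varpi_\af}$ inner product, which is what $({\mathscr F}_c^{(\af)})^{*}$ means here.
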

The following theorem indicates  that the ball PSWFs  are  eigenfunctions of ${\mathcal F}_c^{(\alpha)}$ and ${\mathcal Q}_c^{(\alpha)}$
simultaneously.
\begin{lemma}[{\cite[Theorem 4.1]{ZhangLi.18}}]\label{lamth}For  $\alpha>-1$ and $c>0,$  the  ball PSWFs
$\big\{\psi^{(\af,n)}_{k,{\ell}}(\bx;c)\big\}_{(\ell,n)\in \Upsilon_{\!0}}^{k\in {\mathbb N}_0}$ are also the eigenfunctions
of ${\mathcal Q}_{c}^{(\af)}:$
\begin{equation*}\label{Qc}
{\mathcal Q}_{c}^{(\af)}[\psi^{(\af,n)}_{k,{\ell}}](\bx;c)=\mu_{n,k}^{(\af)}(c)\,\psi^{(\af,n)}_{k,{\ell}}(\bx;c),
\end{equation*}
and the eigenvalues satisfy
\begin{equation*}\label{mulambda}
\mu_{n,k}^{(\alpha)}(c)=
|\lambda_{n,k}^{(\af)}(c)|^2\,.
\end{equation*}
\end{lemma}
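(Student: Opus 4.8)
The plan is to read the claim straight off the definition $\mathcal{Q}_c^{(\af)}=(\mathscr{F}_c^{(\alpha)})^{*}\circ\mathscr{F}_c^{(\alpha)}$, once we know that the ball PSWFs are eigenfunctions of the adjoint $(\mathscr{F}_c^{(\af)})^{*}$ as well as of $\mathscr{F}_c^{(\af)}$. The first step is to identify $(\mathscr{F}_c^{(\af)})^{*}$ on $L^2_{\varpi_{\af}}(\ball)$. Since the kernel $\e^{-\ri c\langle\bx,\bs\tau\rangle}$ is symmetric under the interchange $\bx\leftrightarrow\bs\tau$, Fubini's theorem shows that $\mathscr{F}_c^{(\af)}$ is complex-symmetric with respect to the bilinear pairing $[\phi,\chi]_{\varpi_{\af}}:=\int_{\ball}\phi\,\chi\,\varpi_{\af}\,\d\bx$; equivalently, $(\mathscr{F}_c^{(\af)})^{*}\chi=\overline{\mathscr{F}_c^{(\af)}\bar\chi}$ for every $\chi\in L^2_{\varpi_{\af}}(\ball)$. (The integral-kernel form of $\mathcal{Q}_c^{(\af)}$ recorded in Lemma \ref{th:Q} is not needed here.)

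The second step uses two elementary facts from \cite{ZhangLi.18}: the ball PSWFs $\psi^{(\af,n)}_{k,{\ell}}(\cdot\,;c)$ are real-valued, and the modulus eigenvalues $\lambda_{n,k}^{(\af)}(c)$ are real and positive. Conjugating the eigen-relation \eqref{eigen1} and using $\overline{\psi^{(\af,n)}_{k,{\ell}}}=\psi^{(\af,n)}_{k,{\ell}}$ then yields
\[
(\mathscr{F}_c^{(\af)})^{*}\psi^{(\af,n)}_{k,{\ell}}=\overline{\mathscr{F}_c^{(\af)}\psi^{(\af,n)}_{k,{\ell}}}=\overline{(-\ri)^{n+2k}\lambda_{n,k}^{(\af)}(c)}\,\psi^{(\af,n)}_{k,{\ell}}=\ri^{\,n+2k}\,\lambda_{n,k}^{(\af)}(c)\,\psi^{(\af,n)}_{k,{\ell}}.
\]
Composing this with \eqref{eigen1} gives $\mathcal{Q}_c^{(\af)}\psi^{(\af,n)}_{k,{\ell}}=(-\ri)^{n+2k}\ri^{\,n+2k}\big(\lambda_{n,k}^{(\af)}(c)\big)^2\psi^{(\af,n)}_{k,{\ell}}$, and since $(-\ri)^{n+2k}\ri^{\,n+2k}=1$ and $\lambda_{n,k}^{(\af)}(c)>0$ the right-hand side is $|\lambda_{n,k}^{(\af)}(c)|^2\,\psi^{(\af,n)}_{k,{\ell}}$, which is exactly the asserted eigen-equation together with $\mu_{n,k}^{(\af)}(c)=|\lambda_{n,k}^{(\af)}(c)|^2$.

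If one prefers not to invoke the realness of the PSWFs, the same conclusion follows from completeness: the family $\{\psi^{(\af,n)}_{k,{\ell}}(\cdot\,;c)\}$ is an orthonormal basis of $L^2_{\varpi_{\af}}(\ball)$ made of eigenfunctions of $\mathscr{F}_c^{(\af)}$, so $\mathscr{F}_c^{(\af)}$ admits the spectral representation $\mathscr{F}_c^{(\af)}\phi=\sum(-\ri)^{n+2k}\lambda_{n,k}^{(\af)}(c)\,(\phi,\psi^{(\af,n)}_{k,{\ell}})_{\varpi_{\af}}\,\psi^{(\af,n)}_{k,{\ell}}$; its adjoint then has the same eigenfunctions with conjugated eigenvalues, and the composition above goes through verbatim. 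The only non-formal input is thus the identification of $(\mathscr{F}_c^{(\af)})^{*}$ together with the fact that the PSWF system diagonalizes it (realness, or completeness) — both already available from \cite{ZhangLi.18} — after which the proof is a one-line composition plus the trivial bookkeeping $(-\ri)^{n+2k}\overline{(-\ri)^{n+2k}}=1$; I expect no obstacle beyond keeping track of which factor carries the phase $(-\ri)^{n+2k}$.
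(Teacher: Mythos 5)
Your argument is correct: identifying $(\mathscr{F}_c^{(\af)})^{*}\chi=\overline{\mathscr{F}_c^{(\af)}\bar\chi}$ from the symmetry of the kernel, using the realness of $\psi^{(\af,n)}_{k,\ell}$ and the positivity of the modulus $\lambda_{n,k}^{(\af)}(c)$ to get $(\mathscr{F}_c^{(\af)})^{*}\psi^{(\af,n)}_{k,\ell}=\ri^{\,n+2k}\lambda_{n,k}^{(\af)}(c)\,\psi^{(\af,n)}_{k,\ell}$, and composing, is exactly the standard Slepian-type argument and the phase bookkeeping $(-\ri)^{n+2k}\ri^{\,n+2k}=1$ is right. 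Note, however, that the paper itself offers no proof of this lemma --- it is imported verbatim from \cite[Theorem 4.1]{ZhangLi.18} --- so there is nothing internal to compare against; your two external inputs (realness of the scalar ball PSWFs and the fact that $\lambda_{n,k}^{(\af)}(c)$ denotes the positive modulus of the $\mathscr{F}_c^{(\af)}$-eigenvalue) are precisely what that reference supplies, and your observation that the explicit kernel of $\mathcal{Q}_c^{(\af)}$ in Lemma \ref{th:Q} is not needed is accurate.
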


\subsection{Divergence of the finite Fourier transform of a divergence free field}
To solve the  optimal concentration problem for band-limited and  divergence free vector fields,
 it is crucial to choose  fields ${\bs E}(\bx;c)$ such that:
\begin{itemize}
\item  They are vectorial eigenfunctions of the (finite) Fourier integral operator,
       i.e., \begin{equation}
       \label{fcbphi}
       {\mathscr F}_{c}^{(\af)}[{\bs E}](\bx;c)=\lambda{\bs E}(\bx;c).\end{equation}
\item  They satisfy the divergence free constraint, i.e.,$\nabla\cdot {\bs E}(\bx;c)=0$.\\
\end{itemize}

To this end, we first represent a vector field  ${\bs E}\in L^2(\RR^3)^3$  as  a series in vector spherical harmonics under  spherical coordinates,
\begin{equation}
\label{E}
     {\bs E}(\bx) =\sum\limits_{n=0}^{\infty}\sum\limits_{\ell=1}^{2n+1}\big[E_{\ell}^{n,1}(r)\bY_\ell^{n,1}(\hat{\bs x})+E_{\ell}^{n,2}(r)\bY_\ell^{n,2}(\hat{\bs x})+E_{\ell}^{n,3}(r)\bY_\ell^{n,3}(\hat{\bs x})\big].
\end{equation}
The divergence of  ${\bs E}$ can be directly obtained  from \eqref{div-1}-\eqref{div-3},
\begin{eqnarray*}
\begin{aligned}
  \nabla\cdot {\bs E} =\sum\limits_{n=0}^{\infty}\sum\limits_{\ell=1}^{2n+1}\Big[  (\partial_r+\frac{2}{r})E_{\ell}^{n,1}(r)-\frac{n(n+1)}{r} E_{\ell}^{n,2}(r) \Big]Y_\ell^{n}(\hat{\bs x}).
\end{aligned}
\end{eqnarray*}
Hence, the divergence free constraint $\nabla\cdot {\bs E}=0$ is equivalent to
\begin{align}
\label{Edivfree}
E_{1}^{0,1}(r) = \frac{C}{r^2}, \quad \text{ and } \quad    E_{\ell}^{n,2}(r) =\frac{1}{n(n+1)} (\partial_r+\frac{1}{r}) [r E_{\ell}^{n,1}(r)], \quad (\ell,n) \in \Upsilon.
\end{align}
Then  \eqref{E}, \eqref{Edivfree}  and \eqref{curl-3}
state that a divergence free field ${\bs E}\in L^2(\RR^3)^3$ or ${\bs E}\in L^2(\mathbb{B})^3$  is necessarily   taking the form,
\begin{align}
\begin{split}
{\bs E}&\,=\sum\limits_{n=1}^{\infty}\sum\limits_{\ell=1}^{2n+1} \Big[ E_{\ell}^{n,3}(r) \bY_{\ell}^{n,3}(\hat\bx)
-\frac{1}{n(n+1)} \nabla\times\big(rE_{\ell}^{n,1}(r)  \bY_{\ell}^{n,3}(\hat \bx)\big) \Big]
\\
&\overset{\eqref{y-3}}=\sum\limits_{n=1}^{\infty}\sum\limits_{\ell=1}^{2n+1}
\Big[ (\bx \times \nabla ) \big(E_{\ell}^{n,3}(r) Y_{\ell}^{n}(\hat \bx)\big)
-   \frac{1}{n(n+1)} \nabla\times (\bx \times \nabla ) \big(rE_{\ell}^{n,1}(r)  Y_{\ell}^{n}(\hat \bx)\big) \Big]
\\
&:= \sum\limits_{n=1}^{\infty}\sum\limits_{\ell=1}^{2n+1} \big[ {\bs E}_{\ell}^{n,3}(\bx) + {\bs E}_{\ell}^{n,1}(\bx)\big].
\end{split}
\label{DFconst}
\end{align}

In the forthcoming discussion, we shall visit the divergence of the finite Fourier transform of a divergence free vectorial  field.
By resorting to the spherical-polar coordinates
$\bx = r\bs{\hat x}$ and $\bs \tau = \tau \bs{\hat \tau}$
 with $r,\tau\ge 0$ and $\bs{\hat x},\bs{\hat \tau}\in \sph^{2}$,  we deduce that
\begin{equation*}
\begin{split}
  [ {\mathscr F}_{c}^{(\af)} & {\bs E}_{\ell}^{n,3} ](\bx)
  =\int_{\ball}\e^{-\ri c \langle \bx, \bs \tau\rangle} {\bs E}_{\ell}^{n,3} ({\bs \tau})\omega_{\alpha}(\bs \tau)\d\bs \tau
  \\
\overset{\eqref{y-1}}=&\,  \int_0^1 (1-\tau^2)^{\alpha}  \tau^{2} E^{n,3}_{\ell} (\tau)\d{\tau}\
\int_{\sph^{2}} \e^{-\ri  c\tau r \langle {\bs{ \hat{x}}}, \hat {\bs \tau} \rangle}(\hat{\bs\tau}\times\nabla_{0,\bs \tau}) Y^{n}_{\ell}(\hat{\bs \tau})
 \d\s({\hat {\bs \tau}})
 \\
\overset{\eqref{InbyP}}=&\,  -\int_0^1 (1-\tau^2)^{\alpha}  \tau^{2}   E^{n,3}_{\ell}(\tau)  \d{\tau}\
\int_{\sph^{2}}[(\hat{\bs\tau}\times\nabla_{0,\bs \tau}) \e^{-\ri  c\tau r \langle {\bs{ \hat{x}}}, \hat {\bs \tau} \rangle}]Y^{n}_{\ell}(\hat{\bs \tau})
 \d\s({\hat {\bs \tau}})
 \\
=&-\int_{\ball}[(\bs\tau\times\nabla_{\bs \tau})\e^{-\ri
c \langle \bx, \bs \tau\rangle}]  E^{n,3}_{\ell}(\tau) Y^{n}_{\ell}(\hat{\bs \tau})\omega_{\alpha}(\bs \tau)\d\bs \tau
\\
=&\int_{\ball}[(\bx\times\nabla_{\bx})\e^{-\ri
c \langle \bx, \bs \tau\rangle}] E^{n,3}_{\ell} (\tau)Y^{n}_{\ell}(\hat{\bs \tau})\omega_{\alpha}(\bs \tau)\d\bs \tau
\\
=\,&(\bx\times\nabla)\int_{\ball}\e^{-\ri
c \langle \bx, \bs \tau\rangle} E^{n,3}_{\ell} (\tau)Y^{n}_{\ell}(\hat{\bs \tau})\omega_{\alpha}(\bs \tau)\d\bs \tau,
\end{split}\label{Freduc}
\end{equation*}
where  $\nabla_{\bs\tau}$ (resp. $\nabla_{\bx}$) is understood as the gradient operator  with respect to the variable
$\bs\tau$ (resp. $\bx$).
In view of \eqref{opert-1d},   the finite Fourier transform of $ {\bs E}_{\ell}^{n,3}(\bx)$ is  always divergence free,
\begin{align}
\label{divfree:E3}
[ \nabla \cdot  {\mathscr F}_{c}^{(\af)} &{\bs E}_{\ell}^{n,3} ](\bx)  = 0.
\end{align}
Meanwhile,
\begin{equation*}
\begin{split}
 [\nabla \cdot  {\mathscr F}_{c}^{(\af)} & {\bs E}_{\ell}^{n,1} ](\bx) =
-\frac1{n(n+1)}\nabla \cdot\int_{\ball}\nabla_{\bs \tau}\times (\bs \tau\times\nabla_{\bs \tau}) \big[\tau E_{\ell}^{n,1}(\tau )  Y_{\ell}^{n}(\hat{\bs \tau}) \big] \e^{-\ri
c \langle \bx, \bs \tau\rangle}\omega_{\alpha}(\bs \tau)\d\bs \tau
\\
=&-\frac1{n(n+1)}\int_{\ball}\nabla_{\bs \tau}\times(\bs \tau\times\nabla_{\bs \tau}) \big[\tau E_{\ell}^{n,1}(\tau )  Y_{\ell}^{n}(\hat{\bs \tau}) \big]  \cdot(-\ri c)\bs \tau\e^{-\ri
c \langle \bx, \bs \tau\rangle}\omega_{\alpha}(\bs \tau)\d\bs \tau
\\
\overset{\eqref{opert-1c}}=&\frac{\ri c}{n(n+1)}\int_{\ball}\Delta_{0,{\bs\tau}} \big[\tau E_{\ell}^{n,1}(\tau )  Y_{\ell}^{n}(\hat{\bs \tau}) \big]  \e^{-\ri
c \langle \bx, \bs \tau\rangle}\omega_{\alpha}(\bs \tau)\d\bs \tau
\\
\overset{\eqref{eq:LaplaceBeltrami}}=&(-\ri c)\int_{\ball} \tau E_{\ell}^{n,1}(\tau )  Y_{\ell}^{n}(\hat{\bs \tau}) \e^{-\ri
c \langle \bx, \bs \tau\rangle}\omega_{\alpha}(\bs \tau)\d\bs \tau.
\end{split}
\end{equation*}
Owing to the compactness of the finite Fourier transform operator ${\mathscr F}_{c}^{(\af)}$,
\begin{align}
\label{divfree:E1}
[\nabla \cdot  {\mathscr F}_{c}^{(\af)}  {\bs E}_{\ell}^{n,1} ](\bx)=0  \quad \text{ is equivalent to }\quad
 E^{n,1}_{\ell}(r)  = 0 =  E^{n,2}_{\ell}(r).
\end{align}
Since the finite Fourier transform of any divergence free vectorial eigenfunction of \eqref{fcbphi} is necessarily divergence free, the equations \eqref{divfree:E3} and \eqref{divfree:E1} reveal that
 a divergence free eigenfunction of \eqref{fcbphi} can only be  expressed as
\begin{align*}
\begin{split}
{\bs E}(\bx;c)&\,= \sum\limits_{n=1}^{\infty}\sum\limits_{\ell=1}^{2n+1} E_{\ell}^{n,3}(r;c) \bY_{\ell}^{n,3}(\hat\bx)
=\bx \times\nabla  \sum\limits_{n=1}^{\infty}\sum\limits_{\ell=1}^{2n+1} E_{\ell}^{n,3}(r;c) Y_{\ell}^{n}(\hat\bx)
\\
&:=\bx \times\nabla  \sum\limits_{n=1}^{\infty}\sum\limits_{\ell=1}^{2n+1} \phi^n_{\ell} (\bx,c).
\end{split}
\end{align*}
Thus  the eigenvalue problem \eqref{fcbphi} can be reduced to the scalar eigenvalue problem
\begin{equation*}
{\mathscr F}_{c}^{(\af)}\phi(\bx;c)=\lambda^{(\af)}\phi(\bx;c).
\end{equation*}
which link up the definition \ref{BPSWFs} of scalar ball PSWFs.
\subsection{Divergence free ball PSWFs}

Based on the previous discussions in this section, we give the following definition.
\vskip 3pt
\begin{defn} {\bf (Divergence free ball PSWFs).} For real $\alpha>-1$ and real $c\ge 0,$
the divergence free ball  prolate spheroidal wave functions
on a unit  ball $\ball,$ denoted by $\big\{\swf_{k,\ell}(\bx; c)\big\}_{(\ell,n)\in \Upsilon}^{k\in {\mathbb N}_0}$  are  eigenfunctions of the integral operator ${\mathscr F}_c^{(\alpha)}$ defined in \eqref{eigen1}, that is,
\begin{equation}\label{eigen2}
{\mathscr F}_c^{(\alpha)}[\swf_{k,{\ell}}](\bx;c)={(-\ri)}^{n+2k}\lambda_{n,k}^{(\af)}(c)\, \swf_{k,{\ell}}(\bx;c),\quad
\bx\in \ball,
\end{equation}
where $\big\{\lambda_{n,k}^{(\af)}(c)\big\}_{k,n\in \NN}$  are modulus of  the corresponding eigenvalues
defined in \eqref{eigen1}, and $c$ is the bandwidth parameter.
\end{defn}
Two main issues of the divergence free vectorial ball PSWFs on ${\mathbb B}$  need to be addressed:
\begin{itemize}
\item In the spherical-polar coordinates, divergence
free vectorial ball PSWFs $\swf_{k,\ell}(\bx;c)$ have the simple representation by the scalar ball PSWFs $\psi_{k,\ell}^{\af,n}(\bx;c)$, i.e.,
\begin{equation}\label{VPSWF}
\begin{split}
\swf_{k,\ell}(\bx;c)&=(\bx\times\nabla)\psi_{k,\ell}^{\af,n}(\bx;c),\quad  (\ell,n)\in \Upsilon,\;\; k\in\NN_0.
\end{split}
\end{equation}
\item With the aid of  Lemma \ref{lamth} and \eqref{eigen2}, we can derive that
$\big\{\swf_{k,{\ell}}(\bx;c)\big\}_{(\ell,n)\in \Upsilon}^{k\in \NN_0}$ are also the eigenfunctions
of ${\mathcal Q}_{c}^{(\af)}:$
\begin{equation*}
{\mathcal Q}_{c}^{(\af)}[\swf_{k,{\ell}}](\bx;c)=\mu_{n,k}^{(\af)}(c)\,\swf_{k,{\ell}}(\bx;c),
\end{equation*}
and the eigenvalues are the same as those defined in \eqref{lamth}  such that have the relation: $\mu_{n,k}^{(\alpha)}(c)=|\lambda_{n,k}^{(\af)}(c)|^2\,.$
\end{itemize}

It is straightforward to prove the following  orthogonality result.
\begin{thm}\label{VGpswfP}  For any $c>0$ and $\af>-1$,  divergence free vectorial ball PSWFs
 $\big\{\swf_{k,\ell}(\bx; c)\big\}_{(\ell,n)\in \Upsilon}^{k\in \NN_0}$ are all real, smooth, and  orthogonal  in
$L^2_{\varpi_{\af}}(\ball)^3$,
\begin{equation*}\label{orthnswf}
\int_{\ball} \swf_{k,\ell}(\bx; c)\cdot {\bs \psi}^{\af,m}_{j,\iota}(\bx; c)\varpi_{\af} (\bx)\d\bx=n(n+1)\delta_{k,j}\delta_{\ell ,\iota}\delta_{n,m}\,
\quad (\ell,n),(\iota,m)\in \Upsilon,\, k,j\in \NN_0.
\end{equation*}
\end{thm}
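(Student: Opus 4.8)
The plan is to deduce everything from the representation \eqref{VPSWF} together with the orthogonality of the scalar ball PSWFs and the identities of Lemmas \ref{calculus}–\ref{lemInbyP}. First I would record that $\swf_{k,\ell}(\bx;c) = (\bx\times\nabla)\psi^{\af,n}_{k,\ell}(\bx;c)$ (which the paper has already extracted from the reduction in Section~\ref{sect3}); since the scalar ball PSWFs are real and smooth on $\ball$ (they are analytic eigenfunctions of the compact operator $\mathscr{F}_c^{(\af)}$ and of the Sturm–Liouville operator $\mathscr{L}_{\bx}^{(\af)}$), and $\bx\times\nabla$ is a first-order differential operator with smooth coefficients, the vectorial functions $\swf_{k,\ell}$ are automatically real and smooth. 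That disposes of the first two assertions.

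For the orthogonality, the main computation is to evaluate
\begin{equation*}
\int_{\ball} \big[(\bx\times\nabla)\psi^{\af,n}_{k,\ell}\big]\cdot\big[(\bx\times\nabla)\psi^{\af,m}_{j,\iota}\big]\,\varpi_\af(\bx)\,\d\bx .
\end{equation*}
The natural move is an integration by parts that transfers one copy of $\bx\times\nabla$ onto the other factor. Concretely, I would write $(\bx\times\nabla)\psi^{\af,m}_{j,\iota}$ using \eqref{opert-1D} as $\hat\bx\times\nabla_0\psi^{\af,m}_{j,\iota}$, peel off the radial variable, and on each sphere $r\sph^2$ apply Lemma~\ref{lemInbyP} (the skew-adjointness of $\hat\bx\times\nabla_0$) to move the operator across. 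Reassembling the radial integral and using \eqref{opert-1j}, namely $(\bx\times\nabla)\cdot(\bx\times\nabla)=\Delta_0$, the double cross-product collapses to
\begin{equation*}
-\int_{\ball} \psi^{\af,n}_{k,\ell}(\bx;c)\,\big[\Delta_0\,\psi^{\af,m}_{j,\iota}(\bx;c)\big]\,\varpi_\af(\bx)\,\d\bx .
\end{equation*}
Now $\psi^{\af,m}_{j,\iota}(\bx;c)$, expanded in ball polynomials $P^{\af,m}_{\cdot,\iota}$, carries the single angular mode $Y_\iota^m$, so by \eqref{eq:LaplaceBeltrami} we have $\Delta_0\psi^{\af,m}_{j,\iota} = -m(m+1)\psi^{\af,m}_{j,\iota}$, and the integral becomes $m(m+1)\,(\psi^{\af,n}_{k,\ell},\psi^{\af,m}_{j,\iota})_{\varpi_\af}$. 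Invoking the $L^2_{\varpi_\af}(\ball)$-orthonormality of the scalar ball PSWFs (Definition~\ref{BPSWFs} gives that they are eigenfunctions of a self-adjoint operator with the eigenvalue ordering forcing simplicity within each $n$, hence orthonormal after normalization), this equals $n(n+1)\,\delta_{nm}\delta_{kj}\delta_{\ell\iota}$, which is exactly the claimed identity.

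The step I expect to require the most care is the integration by parts on the sphere: Lemma~\ref{lemInbyP} is stated for $C^1$ functions on $\sph^{d-1}$ with $\nabla_0$, so I must justify applying it slice-by-slice in $r$ and check that no boundary term appears (there is none, since $\sph^2$ is closed), and I must be careful that when I write $(\bx\times\nabla)\psi\cdot(\bx\times\nabla)\varphi$ as the contraction of two tangential vector fields, moving \emph{one} factor of $\hat\bx\times\nabla_0$ across really does produce $\varphi\,[(\hat\bx\times\nabla_0)\cdot(\hat\bx\times\nabla_0)\psi]=\varphi\,\Delta_0\psi$ rather than a stray curvature term — this is precisely guaranteed by \eqref{opert-1j}. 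Everything else is bookkeeping with the radial weight $\tau^2(1-\tau^2)^\af$ and the known normalizations.
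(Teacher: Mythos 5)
Your argument is correct and follows essentially the same route as the paper's own proof: rewrite both factors as $\hat\bx\times\nabla_0$ acting on the scalar ball PSWFs, integrate by parts on the sphere via Lemma~\ref{lemInbyP} to collapse the product to $-\Delta_0$ by \eqref{opert-1j}, apply the eigenrelation \eqref{eq:LaplaceBeltrami}, and invoke the $L^2_{\varpi_\af}$-orthonormality of the scalar ball PSWFs. The only cosmetic difference is that you land $\Delta_0$ on the second factor (producing $m(m+1)$, converted to $n(n+1)$ by the Kronecker delta) whereas the paper puts it on the first; this is immaterial.
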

\begin{proof}In view of \eqref{eq:LaplaceBeltrami},
we have
\begin{eqnarray*}
\begin{aligned}
  (\bs{\psi}_{k,\ell}^{\alpha,n}, \bs{\psi}_{j,\iota}^{\alpha,m})_{\omega^{\alpha}}=&(\bs x \times\nabla  \psi^{\alpha,n}_{k,\ell}, \bs x \times\nabla \psi^{\alpha,m}_{j,\iota})_{\omega_{\alpha}}\\
=& (\hat{\bs x} \times\nabla_0  \psi^{\alpha,n}_{k,\ell}, \hat{\bs x} \times\nabla_0  \psi^{\alpha,m}_{j,\iota})_{\omega_{\alpha}}
\\
=& -( \Delta_0 \psi^{\alpha,n}_{k,\ell}, \psi^{\alpha,m}_{j,\iota})_{\omega_{\alpha}}=- (r^n \phi_{k}^{\alpha,n}(2r^2-1; c) \Delta_0Y^n_{\ell}(\hat\bx), \psi^{\alpha,m}_{j \iota})_{\varpi_{\af}}\\
=&n(n+1)(r^n \phi_{k}^{\alpha,n}(2r^2-1; c)Y^n_{\ell}(\hat\bx), \psi^{\alpha,m}_{j \iota})_{\varpi_{\af}}\\
  =&n(n+1)(\psi^{\alpha,n}_{k,\ell} , \psi^{\alpha,m}_{j \iota})_{\varpi_{\af}}=n(n+1)\delta_{mn}\delta_{\ell\iota}\delta_{jk}.
 \end{aligned}
  \end{eqnarray*}
This gives the proof.
  \end{proof}


\section{Divergence free ball PSWFs as  vectorial eigenfunctions of Sturm-Liouville operators}\label{sec4}
In this section, we show that the divergence free vectorial ball PSWFs are eigenfunctions of  two differential operators.

\subsection{Sturm-Liouville equations of the first kind}
For $\af>-1$, we define the  second-order Sturm-Liouville differential operator $\mathscr{L}_{\bx}^{(\af)}$ with a parameter $c\ge 0$,
\begin{align}
\label{opDxB}
&\mathscr{L}_{c,\bx}^{(\af)}:=\mathscr{L}_{\bx}^{(\af)}+ c^2 \|\bx\|^2=-(1-\|\bx\|^2)^{-\af}\nabla\cdot({\bs {\rm I}}-\bx\bx^{\tr})(1-\|\bx\|^2)^{\af}\nabla+ c^2 \|\bx\|^2,
\end{align}
for $\bx\in \ball$.
Throughout this paper, composite differential operators  are  understood in the convention of right associativity, for instance,
$$\nabla\cdot({\bs {\rm I}}-\bx\bx^{\tr})(1-\|\bx\|^2)^{\af}\nabla
=\nabla\cdot[({\bs {\rm I}}-\bx\bx^{\tr})(1-\|\bx\|^2)^{\af}\nabla].$$
Obviously,  $\mathscr{L}_{c,\bx}^{(\af)}$  extends the definition of $\mathscr{L}_{\bx}^{(\af)}$   such that
  $\mathscr{L}_{\bx}^{(\af)}=\mathscr{L}_{0,\bx}^{(\af)}$ and $\mathscr{L}_{c,\bx}^{(\af)}=\mathscr{L}_{\bx}^{(\af)}+c^2\|\bx\|^2$.

In the forthcoming discussion, we shall demonstrate that the second-order Sturm-Liouville differential operator $\mathscr{L}_{c, \bx}^{(\af)}$ only have one kind of divergence free vectorial eigenfunctions, which are divergence
free vectorial ball PSWFs.  For this purpose, we first prove the following result.
\begin{lemma}\label{Lxprop} It  holds that
 \begin{align}
 \label{L-curl}
&(\bx\times\nabla)\mathscr{L}_{c,\bx}^{(\af)}=\mathscr{L}_{c,\bx}^{(\af)}(\bx\times\nabla),
\\
 \label{L-curl2}
&\Curl\mathscr{L}_{c,\bx}^{(\af)}=[\mathscr{L}_{c,\bx}^{(\af+1)}+2\af+4]\Curl + 2\,c^2 \bx \times ,
\\
 \label{L-curl3}
&\mathscr{L}_{c,\bx}^{(\af)} \nabla \times \bx \times \nabla
= \nabla\times \bx \times \nabla  [\mathscr{L}_{c,\bx}^{(\af-1)}-(2\af+2)]
+2\,c^2\, \|\bx\| \nabla_0   .
\end{align}
\end{lemma}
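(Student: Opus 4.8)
The plan is to verify the three operator identities by reducing everything to commutation relations involving $\bx\times\nabla$, $\Curl$, $\nabla_0$, and the scalar Sturm--Liouville operator, using Theorem \ref{thmdx} and Lemmas \ref{calculus}--\ref{calculus2}. For \eqref{L-curl}, I would start from the form $\mathscr{L}_{c,\bx}^{(\af)}=-(1-\|\bx\|^2)^{-\af}\nabla\cdot(1-\|\bx\|^2)^{\af+1}\nabla-\Delta_0+c^2\|\bx\|^2$ given in \eqref{Ldef}. The term $c^2\|\bx\|^2$ obviously commutes with $\bx\times\nabla$ since $\bx\times\nabla$ annihilates any radial function times it (more precisely $(\bx\times\nabla)(\|\bx\|^2 f)=\|\bx\|^2(\bx\times\nabla)f$ because $\bx\times\nabla$ has no radial component, cf. \eqref{opert-1d}). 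The commutation with $\Delta_0$ is exactly \eqref{D0xn}. For the first term, I would use that $\bx\times\nabla=\hat{\bx}\times\nabla_0$ acts only in the angular variables and commutes with the radial operator $-(1-r^2)\partial_r^2-\frac{2}{r}\partial_r+(2\af+4)r\partial_r$ appearing in the third line of \eqref{Ldef}, while the remaining piece $-\frac{1}{r^2}\Delta_0$ again commutes with $\bx\times\nabla$ by \eqref{D0xn}. Assembling these gives \eqref{L-curl}.

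For \eqref{L-curl2} and \eqref{L-curl3}, the strategy is to exploit the interplay between $\Curl$ and the weight $(1-\|\bx\|^2)^{\af+1}$, whose gradient produces a factor $-2(\af+1)\bx$; it is precisely these ``boundary'' terms that shift $\af\to\af\pm1$ and generate the extra $c^2\bx\times$ and $c^2\|\bx\|\nabla_0$ summands. I would write $\mathscr{L}_{c,\bx}^{(\af)}=-(1-\|\bx\|^2)^{-\af}\nabla\cdot(1-\|\bx\|^2)^{\af+1}\nabla-\Delta_0+c^2\|\bx\|^2$ and push $\Curl$ through each factor. The key algebraic facts I would invoke are: $\nabla\times(g(r)\,\bs a)=\nabla g\times\bs a+g\,\nabla\times\bs a$ applied with $g=(1-\|\bx\|^2)^{\pm\af\mp\text{(something)}}$; the curl identities \eqref{curl-1}--\eqref{curl-3} for radial functions times vector spherical harmonics; and $\Delta_0\Curl$ versus $\Curl\Delta_0$ commutation (which I would need to establish or extract from Lemma \ref{calculus2}, since \eqref{opert-1E} expresses $\nabla\times(\bx\times\nabla)$ explicitly). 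For \eqref{L-curl3} I would similarly compute $\mathscr{L}_{c,\bx}^{(\af)}$ acting on $\nabla\times(\bx\times\nabla)$, using \eqref{opert-1E} to replace $\nabla\times(\bx\times\nabla)$ by $\bx\Delta-(\bx\cdot\nabla+2)\nabla$ and then tracking how $\mathscr{L}_{c,\bx}^{(\af)}$ interacts with $\Delta$, $\bx$, and $\nabla$; the term $2c^2\|\bx\|\nabla_0$ should emerge from $c^2\|\bx\|^2$ not commuting past $\nabla$ (since $\nabla(\|\bx\|^2 f)=2\bx f+\|\bx\|^2\nabla f$, and the $\bx$-part recombines via \eqref{opert-1g} into $-\|\bx\|\nabla_0$ up to the extra $\bx(\bx\cdot\nabla)$ piece which cancels against similar contributions).

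A cleaner route for \eqref{L-curl2} and \eqref{L-curl3}, which I expect to be less error-prone, is to verify the identities on a spanning set: apply both sides to an arbitrary radial function times a vector spherical harmonic $\bY_\ell^{n,i}(\hat\bx)$, $i=1,2,3$, reduce via \eqref{curl-1}--\eqref{curl-3} and \eqref{eq:LaplaceBeltrami} to scalar radial ODEs, and match coefficients. The weight-shift in $\mathscr{L}_{c,\bx}^{(\af)}$ changes only the first-order radial coefficient $(2\af+4)r\partial_r$, so the bookkeeping is a one-parameter computation in $\af$. Either way, the main obstacle is the careful handling of the curl of the weighted divergence term: the non-commutativity of $\Curl$ with multiplication by $(1-\|\bx\|^2)^{\af+1}$ and with $\nabla\cdot$ produces several $\bx$- and $\bx\times$-valued correction terms, and one must verify that exactly the claimed combinations $2\af+4$, $2c^2\bx\times$, $-(2\af+2)$, $2c^2\|\bx\|\nabla_0$ survive after all cancellations. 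I would organize this by first establishing the auxiliary identity $\Curl\big[(1-\|\bx\|^2)^{-\af}\nabla\cdot(1-\|\bx\|^2)^{\af+1}\nabla\big]$ in closed form, then adding the contributions of $-\Delta_0$ and $c^2\|\bx\|^2$ separately.
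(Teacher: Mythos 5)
Your treatment of \eqref{L-curl} is essentially sound: splitting $\mathscr{L}_{c,\bx}^{(\af)}$ as in \eqref{Ldef} and commuting $\bx\times\nabla=\hat\bx\times\nabla_0$ past the purely radial part, past $\Delta_0$ (via \eqref{D0xn}) and past $c^2\|\bx\|^2$ (because $(\bx\times\nabla)\|\bx\|^2=2\,\bx\times\bx=0$; the reference should be \eqref{commute2} rather than \eqref{opert-1d}) does prove the first identity. The paper does the same thing after first collapsing the operator to the form $\mathscr{L}_{c,\bx}^{(\af)}=-\Delta+(\bx\cdot\nabla)(\bx\cdot\nabla+2\af+3)+c^2\|\bx\|^2$ (its \eqref{Lc}) and invoking \eqref{commute1}--\eqref{commute3}.

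For \eqref{L-curl2} and \eqref{L-curl3}, however, what you give is a plan rather than a proof, and the step you defer is precisely the one that matters: you never derive the coefficients $2\af+4$, $2c^2\,\bx\times$, $-(2\af+2)$, $2c^2\|\bx\|\nabla_0$, and you yourself flag the verification of the cancellations as the unresolved ``main obstacle.'' The missing idea is again the reduction to the polynomial form \eqref{Lc}: once the $\af$-dependence sits in the single scalar coefficient $2\af+3$, the two elementary relations $\partial_{x_i}(\bx\cdot\nabla)=(\bx\cdot\nabla+1)\partial_{x_i}$ and $\partial_{x_i}\|\bx\|^2=\|\bx\|^2\partial_{x_i}+2x_i$ give $\partial_{x_i}\mathscr{L}_{c,\bx}^{(\af)}=[\mathscr{L}_{c,\bx}^{(\af+1)}+2\af+4]\partial_{x_i}+2c^2x_i$ in two lines, and \eqref{L-curl2} is just the antisymmetrization of this in the indices. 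Working instead with the curl of the weighted divergence form, or checking the identity on $f(r)\bY_\ell^{n,i}(\hat\bx)$, is far more delicate than you suggest: $\mathscr{L}_{c,\bx}^{(\af)}$ acts componentwise on vector fields and the Cartesian components of $\bY_\ell^{n,i}$ mix spherical harmonics of degrees $n\pm1$, so the promised reduction ``to scalar radial ODEs'' does not go through cleanly. Finally, you miss that \eqref{L-curl3} requires no fresh computation at all: writing \eqref{L-curl2} with $\af$ replaced by $\af-1$ as $\mathscr{L}_{c,\bx}^{(\af)}\Curl=\Curl\,\mathscr{L}_{c,\bx}^{(\af-1)}-(2\af+2)\Curl-2c^2\,\bx\times$, applying it to $\bx\times\nabla$, moving $\mathscr{L}_{c,\bx}^{(\af-1)}$ through $\bx\times\nabla$ by \eqref{L-curl}, and using $\bx\times(\bx\times\nabla)=-\|\bx\|\nabla_0$ from \eqref{opert-1g} yields \eqref{L-curl3} immediately, whereas your proposed direct expansion via \eqref{opert-1E} restarts the bookkeeping from scratch.
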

\begin{proof}
We obtain from a technical reduction together with \eqref{Lap-Bel} that
\begin{align}
\label{Lc}
\begin{split}
\mathscr{L}_{c,\bx}^{(\af)}&= -(1-\|\bx\|^2)^{-\af} \nabla\cdot  (1-\|\bx\|^2)^{\af+1}\nabla - \Delta_0 +c^2 \|\bx \|^2\\
&=-[(1-\|\bx\|^2)\Delta-2(\af+1)(\bx \cdot\nabla)]- [\|\bx\|^2\Delta -(\bx \cdot\nabla) (\bx \cdot\nabla+1)  ] +c^2 \|\bx \|^2\\
&=-\Delta+(\bx\cdot\nabla)(\bx\cdot\nabla+2\af+3) +c^2 \|\bx \|^2.
\end{split}
\end{align}
Then
the identity
$\mathscr{L}_{c,\bx}^{(\af)} (\bx\times\nabla)= (\bx\times\nabla)\mathscr{L}_{c,\bx}^{(\af)}  $
is an immediate consequence of \eqref{commute1}- \eqref{commute3}.

It can be readily  checked that
\begin{equation*}
\partial_{x_i}(\bx\cdot\nabla)=(\bx\cdot\nabla+1)\partial_{x_i}, \qquad  \partial_{x_i}\|\bx\|^2 = \|\bx\|^2 \partial_{x_i} + 2x_i.
\end{equation*}
As a result,
\begin{equation*}
\begin{split}
\partial_{x_i}\mathscr{L}_{c,\bx}^{(\af)}&\overset{\eqref{Lc}}=\partial_{x_i}[-\Delta+(\bx\cdot\nabla)(\bx\cdot\nabla+2\af+3) + c^2\|\bx\|^2]\\
&=[-\Delta+(\bx\cdot\nabla+1)(\bx\cdot\nabla+2\af+4)  +  c^2\|\bx\|^2 ]\partial_{x_i}  +2\,c^2 x_i\\
&=[-\Delta+(\bx\cdot\nabla)(\bx\cdot\nabla+2\af+5)+ c^2\|\bx\|^2] \partial_{x_i}+(2\af+4)\partial_{x_i}+2\,c^2 x_i\\
&\overset{\eqref{Lc}}=\mathscr{L}_{c,\bx}^{(\af+1)}\partial_{x_i}+(2\af+4)\partial_{x_i} +2\,c^2 x_i,
\end{split}
\end{equation*}
which implies
\begin{align*}
\Curl\mathscr{L}_{c,\bx}^{(\af)}=[\mathscr{L}_{c,\bx}^{(\af+1)}+2\af+4]\Curl + 2\,c^2 \bx \times.
\end{align*}
In the sequel,
\begin{align*}
\mathscr{L}_{c,\bx}^{(\af)} \nabla \times \bx \times \nabla
&\overset{\eqref{L-curl2}}=[\nabla\times \mathscr{L}_{c,\bx}^{(\af-1)} -(2\af+2) \nabla\times -2\,c^2\bx\times]\bx \times \nabla
\\
&\overset{\eqref{L-curl}}=\nabla\times \bx \times \nabla  [\mathscr{L}_{c,\bx}^{(\af-1)}-(2\af+2)]
-2\,c^2\, \bx \times \bx \times \nabla ,
\end{align*}
which together with \eqref{opert-1g} gives the desired  result \eqref{L-curl3}.
The proof is completed.
\end{proof}

The following Lemma recall  the scalar ball PSWFs as eigenfunctions of the second-order differential
operator differential
operator $\mathscr{L}_{c,\bx}^{(\af)}$.

\begin{lemma}{\cite[Theorem 4.1]{ZhangLi.18}}
\label{ThZhLi}
 For real $\alpha>-1$ and real $c\ge 0,$
the ball prolate spheroidal wave functions, denoted by $\big\{\psi^{\alpha,n}_{k,\ell}(\bx; c)\big\}_{(\ell,n)\in \Upsilon_{\!0}}^{k\in\NN_0},$ are  eigenfunctions of the differential operator defined in  $\mathscr{L}_{c,\bx}^{(\af)}$ defined in \eqref{opDxB}, that is,
\begin{equation}\label{varphichi}
\mathscr{L}_{c,\bx}^{(\af)}\psi^{\alpha,n}_{k,\ell}(\bx; c)=\chi_{n,k}^{(\af)}(c)\, \psi^{\alpha,n}_{k,\ell}(\bx; c),\quad  \bx\in \ball,
\end{equation}
where  $c$ is the bandwidth parameter, and $\big\{\chi_{n,k}^{(\af)}(c)\big\}_{k,n\in \NN_0}$ are  real, positive eigenvalues ordered  for fixed $n$ as follows
\begin{equation}\label{eqn_incr}
0<\chi_{n,0}^{(\af)}(c)<\chi_{n,1}^{(\af)}(c)<\cdots<
\chi_{n,k}^{(\af)}(c)<\cdots.
\end{equation}

\end{lemma}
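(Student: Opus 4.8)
The plan is to run the classical Slepian ``commuting operator'' argument, adapted to the ball: I would produce a self-adjoint differential operator with \emph{simple} spectrum in each angular sector that commutes with ${\mathscr F}_c^{(\af)}$, so that the two share an eigenbasis. First, fix $(\ell,n)\in\Upsilon_{\!0}$ and consider the sector $V_\ell^n:=\{f(\|\bx\|)\,Y_\ell^n(\hat\bx)\}$; since $L^2_{\varpi_\af}(\ball)=\bigoplus_{(\ell,n)}V_\ell^n$, it suffices to work one sector at a time. By the third expression in \eqref{Ldef} together with $\Delta_0Y_\ell^n=-n(n+1)Y_\ell^n$, the operator $\mathscr{L}_{c,\bx}^{(\af)}$ leaves $V_\ell^n$ invariant and restricts there to the singular Sturm--Liouville operator $f\mapsto-(1-r^2)f''-\tfrac2rf'+(2\af+4)rf'+\big(\tfrac{n(n+1)}{r^2}+c^2r^2\big)f$ on $(0,1)$, self-adjoint and bounded below on $L^2\big(r^2(1-r^2)^\af\,\d r\big)$ under the natural endpoint conditions ($\af>-1$); classical oscillation theory then gives a discrete, \emph{simple} spectrum with eigenfunctions indexed by the number of interior zeros, which forces the ordering \eqref{eqn_incr}, while integrating $\langle\mathscr{L}_{c,\bx}^{(\af)}f,f\rangle_{\varpi_\af}$ by parts gives positivity. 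Dually, expanding the plane wave $\e^{-\ri c\langle\bx,\bs\tau\rangle}=\e^{-\ri c r\tau\langle\hat\bx,\hat\tau\rangle}$ in Gegenbauer polynomials of $\langle\hat\bx,\hat\tau\rangle$ and invoking the Funk--Hecke/addition formula shows that ${\mathscr F}_c^{(\af)}$ also preserves $V_\ell^n$ and restricts there to a one-dimensional integral operator whose kernel carries the prefactor $(-\ri)^n$ and a Bessel function $J_{n+1/2}$.

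The heart of the argument is the commutation $\mathscr{L}_{c,\bx}^{(\af)}\,{\mathscr F}_c^{(\af)}={\mathscr F}_c^{(\af)}\,\mathscr{L}_{c,\bs\tau}^{(\af)}$ on a dense domain of smooth functions. Using the form $\mathscr{L}_{c,\bx}^{(\af)}=-\Delta+(\bx\cdot\nabla)(\bx\cdot\nabla+2\af+3)+c^2\|\bx\|^2$ from \eqref{Lc}, a direct computation gives, with $s=\langle\bx,\bs\tau\rangle$, $\mathscr{L}_{c,\bx}^{(\af)}\e^{-\ri c\langle\bx,\bs\tau\rangle}=\big(c^2\|\bx\|^2+c^2\|\bs\tau\|^2-c^2s^2-\ri c(2\af+4)s\big)\e^{-\ri c\langle\bx,\bs\tau\rangle}$, which is symmetric under $\bx\leftrightarrow\bs\tau$ and hence equals $\mathscr{L}_{c,\bs\tau}^{(\af)}\e^{-\ri c\langle\bx,\bs\tau\rangle}$. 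Differentiating ${\mathscr F}_c^{(\af)}[\phi](\bx)$ under the integral sign, replacing $\mathscr{L}_{c,\bx}^{(\af)}$ on the kernel by $\mathscr{L}_{c,\bs\tau}^{(\af)}$, and integrating by parts twice in $\bs\tau$ using the divergence form $-(1-\|\bs\tau\|^2)^{-\af}\nabla\cdot(\Id-\bs\tau\bs\tau^{\tr})(1-\|\bs\tau\|^2)^\af\nabla$ of \eqref{Ldef}, the boundary integrals over $\sph^2$ vanish because $(\Id-\bs\tau\bs\tau^{\tr})\nabla$ is tangential on $\partial\ball$ (cf.\ \eqref{opert-1a}), while the $c^2\|\bs\tau\|^2$ term moves onto $\phi$ trivially. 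This yields the commutation, which one then closes up on the appropriate operator domains.

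Granting this, conclude sectorwise: on $V_\ell^n$ we have two commuting operators, the radial Sturm--Liouville operator being self-adjoint with simple spectrum, so each of its eigenfunctions is automatically an eigenfunction of the reduced integral operator, and conversely. Thus the eigenfunctions singled out in Definition \ref{BPSWFs} coincide with the radial Sturm--Liouville eigenfunctions times $Y_\ell^n$, proving \eqref{varphichi} with $\chi_{n,k}^{(\af)}(c)$ the $k$-th eigenvalue of the radial operator; positivity and the ordering \eqref{eqn_incr} were already obtained. The step I expect to be genuinely delicate, rather than the formal commutation, is matching the two labellings: showing that the eigenfunction carrying the $k$-th smallest $\chi_{n,k}^{(\af)}(c)$ is the same one carrying the ${\mathscr F}_c^{(\af)}$-eigenvalue of $k$-th largest modulus (equivalently, that $|\lambda_{n,k}^{(\af)}(c)|$ is strictly decreasing in $k$). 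For this I would use that the reduced real symmetric kernel of $\mathcal Q_c^{(\af)}$ on $V_\ell^n$ (Lemma \ref{th:Q}) is an oscillation (totally positive) kernel, so that Kellogg's theorem pins the eigenfunction with $k$ sign changes to the $k$-th eigenvalue, while the Sturm--Liouville side already says its $k$-th eigenfunction has exactly $k$ interior zeros; the two counts then force the identification. Alternatively, the three-term recurrence satisfied by the radial eigenfunctions in a Jacobi-polynomial basis yields the monotonicity of $|\lambda_{n,k}^{(\af)}(c)|$ directly. Possible coincidences $\chi_{n,k}^{(\af)}=\chi_{m,j}^{(\af)}$ across different sectors cause no difficulty, since the whole argument is carried out within a single $V_\ell^n$.
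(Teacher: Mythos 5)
The paper does not actually prove this lemma: it is imported verbatim as Theorem 4.1 of \cite{ZhangLi.18}, so there is no internal proof to compare against. Your argument is the standard Slepian commuting-operator proof that the cited source follows, and it is essentially correct: the key identity $\mathscr{L}_{c,\bx}^{(\af)}\e^{-\ri c\langle \bx,\bs\tau\rangle}=\big(c^2\|\bx\|^2+c^2\|\bs\tau\|^2-c^2\langle\bx,\bs\tau\rangle^2-\ri c(2\af+4)\langle\bx,\bs\tau\rangle\big)\e^{-\ri c\langle\bx,\bs\tau\rangle}$ checks out against \eqref{Lc}, the sector decomposition and radial reduction agree with the third line of \eqref{Ldef}, and you correctly isolate the genuinely delicate step, namely matching the increasing $\chi$-ordering with the decreasing $|\lambda|$-ordering of Definition \ref{BPSWFs}, for which the oscillation-kernel/Kellogg route is the one used in the literature. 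Two small points deserve more care than your sketch gives them: the vanishing of the boundary terms in the double integration by parts rests on the normal component being $(1-\|\bs\tau\|^2)^{\af+1}\partial_r$, whose exponent $\af+1>0$ is what saves the range $-1<\af<0$ where $\varpi_\af$ itself blows up at $\partial\ball$; and the strict positivity $0<\chi_{n,0}^{(\af)}(c)$ genuinely needs $c>0$, since at $c=0$ one has $\chi_{0,0}^{(\af)}(0)=\gamma_0^{(\af)}=0$ by \eqref{gtog} --- a boundary case already glossed over in the statement itself.
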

With the aid of Lemma \ref{Lxprop}, Lemma \ref{BPSWFs} and the relation \eqref{VPSWF}, we can derive the following
result,
\begin{align}
\begin{split}
\mathscr{L}_{c,\bx}^{(\af)}{\bs\psi}^{n}_{k,\ell}(\bx;c)&=\mathscr{L}_{c,\bx}^{(\af)} \bx \times \nabla \psi^{n}_{k,\ell} (\bx;c)
\overset{\eqref{L-curl}}= \bx \times \nabla  \mathscr{L}_{c,\bx}^{(\af)}  \psi^{n}_{k,\ell} (\bx;c)
\\
&\overset{\eqref{varphichi}}=  \chi^{(\alpha)}_{n,k}(c)  \bx \times \nabla\psi^{n}_{k,\ell} (\bx;c) =  \chi^{(\alpha)}_{n,k}(c)  {\bs \psi}^{n}_{k,\ell} (\bx;c),
\end{split}
\label{opD}
\end{align}
which means that  any $(\chi^{(\alpha)}_{n,k},  {\bs \psi}^{n}_{k,\ell} )$ with $(\ell,n) \in \Upsilon, \, k\in \NN_0$  is  a divergence free vectorial pair of
$\mathscr{L}_{c,\bx}^{(\af)}$.
\begin{rem}We would like to point out that Lemma \ref{Lxprop} provides us an approach for solving the vectorial eigen problem \eqref{opD}  with the divergence free constraint
via the  scalar eigen problem. More precisely, \eqref{L-curl} states that,
$(\chi,\bx \times \nabla \phi)$ is a divergence free vectorial eigen pair of  $\mathscr{L}_{c,\bx}^{(\af)}$
if  $(\chi,\phi)$ is an eigen pair of $\mathscr{L}_{c,\bx}^{(\af)}$ with  $\bx \times \nabla \phi\ne 0$; conversely,  $(\chi,\phi)$ is an eigen pair of $\mathscr{L}_{c,\bx}^{(\af)}$ if $(\chi,\bx \times \nabla \phi)$ is a
vectorial eigen pair of  $\mathscr{L}_{c,\bx}^{(\af)}$.
\end{rem}

Next, we shall demonstrate that  the Strum-Liouville operator  $\mathscr{L}_{c,\bx}^{(\af)}$ can not have other
divergence free vectorial eigenfunctions in $ L^2_{\varpi_{\af}}(\ball)^3$. Thanks to the form of \eqref{DFconst},  it suffices to show that
any   ${\bs E}(\bx) =\nabla \times \bx \times \nabla E(\bx)   $ can not be
a divergence free vectorial eigenfunctions of $\mathscr{L}_{c,\bx}^{(\af)} $.
Indeed,
\begin{align}
\begin{split}
\mathscr{L}_{c,\bx}^{(\af)}{\bs E} (\bx)&=\mathscr{L}_{c,\bx}^{(\af)} \nabla \times \bx \times \nabla  E (\bx)
\\
&\overset{\eqref{L-curl3}}
=\nabla\times \bx \times \nabla  [\mathscr{L}_{c,\bx}^{(\af-1)}-(2\af+2)] E(\bx)
+2\,c^2\, r \nabla_0  E(\bx).
\end{split}
\label{opD2}
\end{align}
Thus by \eqref{opert-1h},  \eqref{opert-1i}, and \eqref{eq:LaplaceBeltrami},  the following equality holds
\begin{align*}
0=\nabla \cdot \mathscr{L}_{c,\bx}^{(\af)}{{\bs E}(\bx)}(\bx)
= 2\,c^2\,\Delta_0 E (\bx) ,
\end{align*}
if and only if $E (\bx) = \theta(r) Y^0_1(\hat {\bx})$.  In return,  the spherical component of  $E$ is constant and
$ \nabla \times\bx \times \nabla E (\bx) = \nabla \times \hat \bx \times[ \nabla_0 E (\bx)] =0$. This fact indicates that the  vectorial functions $\nabla \times \hat \bx \times  \nabla E (\bx) $
are not eigenfunctions of the Strum-Liouville operator  $\mathscr{L}_{c,\bx}^{(\af)}$.

Finally, a combination of the previous discussions leads to the following result.
\begin{thm}\label{VBPSWFs} For real $\alpha>-1$ and real $c\ge 0,$
the divergence free ball PSWFs $\swf_{k,\ell}(\bx; c),\, (\ell,n)\in \Upsilon,\,  k\in \NN_0$, are  eigenfunctions of the differential operator $\mathscr{L}_{c,\bx}^{(\af)}$,
\begin{equation}\label{varphichi2}
\mathscr{L}_{c,\bx}^{(\af)}\swf_{k,\ell}(\bx; c)=\chi_{n,k}^{(\af)}(c)\, \swf_{k,\ell}(\bx; c),\quad  \bx\in \ball,
\end{equation}
where the eigenvalues $\big\{\chi_{n,k}^{(\af)}(c)\big\}_{k,n\in {\mathbb N}}$ are defined as in Lemma \ref{ThZhLi}.
\end{thm}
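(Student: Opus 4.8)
The plan is to assemble Theorem \ref{VBPSWFs} directly from the three pillars that have already been put in place in this section, so that the proof is essentially a bookkeeping argument with no genuinely new computation. First I would recall the relation \eqref{VPSWF}, namely $\swf_{k,\ell}(\bx;c)=(\bx\times\nabla)\psi_{k,\ell}^{\af,n}(\bx;c)$ for $(\ell,n)\in\Upsilon$, $k\in\NN_0$, which is the defining representation of the divergence free ball PSWFs in spherical–polar coordinates. Then the computation \eqref{opD} already shows, via the commutation identity \eqref{L-curl} of Lemma \ref{Lxprop} followed by the scalar eigen-equation \eqref{varphichi} of Lemma \ref{ThZhLi}, that
\[
\mathscr{L}_{c,\bx}^{(\af)}\swf_{k,\ell}(\bx;c)=\chi_{n,k}^{(\af)}(c)\,\swf_{k,\ell}(\bx;c),\qquad \bx\in\ball,
\]
which is exactly the asserted eigen-equation \eqref{varphichi2} with the claimed eigenvalues. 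So the ``existence'' half of the theorem is already in hand; the only thing to stress is that the functions $\swf_{k,\ell}$ are genuine (nonzero) eigenfunctions, which follows because $\bx\times\nabla\,\psi_{k,\ell}^{\af,n}\ne 0$ whenever $n\ge 1$ — this is why the index set is restricted to $\Upsilon$ rather than $\Upsilon_{\!0}$, and it is worth a one-line remark that $\Delta_0 Y_\ell^n=-n(n+1)Y_\ell^n$ is nonzero for $n\ge1$ so the tangential gradient does not annihilate the harmonic.

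The second half of the argument — the part that makes the statement sharp — is to argue that these are the \emph{only} divergence free vectorial eigenfunctions of $\mathscr{L}_{c,\bx}^{(\af)}$ in $L^2_{\varpi_{\af}}(\ball)^3$, so that the family $\{\swf_{k,\ell}\}$ is complete among such eigenfunctions. Here I would invoke the structural decomposition \eqref{DFconst}: any divergence free field splits into a part built from $(\bx\times\nabla)\phi$ and a part of the form $\nabla\times(\bx\times\nabla)E$. The first type is precisely captured by \eqref{VPSWF}–\eqref{opD}. For the second type, the computation in \eqref{opD2}, which uses identity \eqref{L-curl3} of Lemma \ref{Lxprop}, together with \eqref{opert-1h}, \eqref{opert-1i} and \eqref{eq:LaplaceBeltrami}, shows that if $\nabla\times(\bx\times\nabla)E$ were an eigenfunction then $2c^2\Delta_0 E=0$, forcing $E=\theta(r)Y_1^0(\hat\bx)$, and then $\nabla\times(\bx\times\nabla)E=\nabla\times(\hat\bx\times\nabla_0 E)=0$ by \eqref{opert-1g}. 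Hence the second type contributes nothing, and the eigenfunctions of $\mathscr{L}_{c,\bx}^{(\af)}$ subject to $\Div\cdot=0$ are exhausted by the $\swf_{k,\ell}$.

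Therefore the proof would read: cite \eqref{VPSWF} and the chain \eqref{opD} for \eqref{varphichi2}, note nonvanishing for $n\ge1$, and then cite the reduction via \eqref{DFconst} and \eqref{opD2} to conclude that no other divergence free eigenfunctions exist — i.e., ``A combination of \eqref{opD}, \eqref{opD2} and the discussion following \eqref{DFconst} yields the assertion.'' I expect the main subtlety to be not a computational one but a matter of careful phrasing: one must be precise that Theorem \ref{VBPSWFs} is simultaneously asserting (a) that each $\swf_{k,\ell}$ satisfies the eigen-equation and (b) that the $\chi_{n,k}^{(\af)}(c)$ are exactly those of Lemma \ref{ThZhLi} (so the spectral data are inherited verbatim from the scalar problem), and implicitly (c) completeness among divergence free eigenfunctions. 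The genuine ``hard part'' — proving the commutation relations \eqref{L-curl}–\eqref{L-curl3} — has already been discharged in Lemma \ref{Lxprop}, so at this stage the theorem is a corollary; the only care needed is to make sure the $L^2_{\varpi_{\af}}$ membership of $\swf_{k,\ell}$ is invoked (it follows since $\psi_{k,\ell}^{\af,n}$ is smooth on the closed ball and $\bx\times\nabla$ is a smooth first-order operator) so that ``eigenfunction'' is used in the correct functional-analytic sense.
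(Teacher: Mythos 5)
Your proposal is correct and follows essentially the same route as the paper, which proves the theorem precisely as ``a combination of the previous discussions'': the chain \eqref{opD} built from \eqref{VPSWF}, the commutation identity \eqref{L-curl} and the scalar eigen-equation \eqref{varphichi}, together with the exclusion of fields of the form $\nabla\times(\bx\times\nabla)E$ via \eqref{opD2}. The only nitpick is that the vanishing of $\nabla\times(\bx\times\nabla)E$ when $E$ has constant spherical part rests on \eqref{opert-1D} (i.e.\ $\bx\times\nabla=\hat\bx\times\nabla_0$) rather than \eqref{opert-1g}, but this does not affect the argument.
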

\begin{rem}If $c=0,$ we find readily from the previous discussions that
\begin{equation}\label{gtog}
\swf_{k,\ell}(\bx; 0)=(\bx\times\nabla)P_{k,\ell}^{\af,n}(\bx),\quad \chi_{n,k}^{(\af)}(0)=\gamma_{n+2k}^{(\af)}.
\end{equation}
\end{rem}


\subsection{Sturm-Liouville equations of the second kind}

For $\af>-1$, we define the   second-order Sturm-Liouville operators of the second kind,
\begin{align}
&\mathscr{D}_{c,\bx}^{(\af)}:=(1-\|\bx\|^2)^{-\af}\nabla\times(1-\|\bx\|^2)^{\af+1}\nabla\times -\Delta_0 + c^2 \|\bx\|^2,
\end{align}
for $\bx\in \ball,$ and real $c\ge 0.$ We are able to show that the divergence free ball PSWFs $\swf_{k,\ell}(\bx; c)$ are the eigenfunctions of the  second-order Sturm-Liouville operators $\mathscr{D}_{c,\bx}^{(\af)}$.  As a preparation, we first obtain the following result.
\begin{lemma}\label{Dxprop} It  holds that
 \begin{align}
 \label{D-curl}
&(\bx\times\nabla)\mathscr{D}_{c,\bx}^{(\af)}=(\bx \times \nabla ) [\mathscr{L}_{c,\bx}^{(\af)} +2(\alpha+1)],
\\
 \label{D-curl2}
&\mathscr{D}_{c,\bx}^{(\af)} \nabla\times \bx \times \nabla   =  \nabla\times  \bx \times \nabla \mathscr{L}_{c,\bx}^{(\af-1)}
 +2c^2 \|\bx \| \nabla_0  -2(\alpha+1) \nabla  \Delta_0.
  \end{align}
\end{lemma}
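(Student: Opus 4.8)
The plan is to mimic the proof of Lemma \ref{Lxprop} almost verbatim, working from a reduced Cartesian form of $\mathscr{D}_{c,\bx}^{(\af)}$. First I would rewrite $\mathscr{D}_{c,\bx}^{(\af)}$ in a form analogous to \eqref{Lc}: using the vector identity $\nabla\times(\nabla\times\bs u)=\nabla(\nabla\cdot\bs u)-\Delta\bs u$ together with the Leibniz rule for $(1-\|\bx\|^2)^{\af+1}$, one obtains
\begin{align*}
\mathscr{D}_{c,\bx}^{(\af)}=-\Delta+(\bx\cdot\nabla)(\bx\cdot\nabla+2\af+3)+c^2\|\bx\|^2+2(\af+1)\nabla\,\Div\,,
\end{align*}
so that $\mathscr{D}_{c,\bx}^{(\af)}=\mathscr{L}_{c,\bx}^{(\af)}+2(\af+1)\nabla\,\Div$ (using \eqref{Lc}). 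This is the key structural observation: the two operators of the second kind and first kind differ exactly by a gradient-of-divergence term, which annihilates (on the left) anything whose curl is being taken, and whose divergence vanishes on divergence-free fields. I expect the careful bookkeeping in deriving this reduced form — in particular getting the coefficient $2(\af+1)$ and the lower-order terms right — to be the one genuinely technical step; everything afterward is formal operator algebra.

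For \eqref{D-curl}: apply $\bx\times\nabla$ on the left of $\mathscr{D}_{c,\bx}^{(\af)}=\mathscr{L}_{c,\bx}^{(\af)}+2(\af+1)\nabla\,\Div$. The term $(\bx\times\nabla)\nabla\,\Div$ must be shown to equal $2(\af+1)(\bx\times\nabla)$ times nothing useful — rather, I would instead note $(\bx\times\nabla)\nabla = \bx\times(\text{Hessian})$ and use \eqref{opert-1d}, i.e. $\nabla\cdot(\bx\times\nabla)=0$, to see that the contribution collapses. More directly: the cleanest route is to write $\mathscr{D}=\mathscr{L}+2(\af+1)\nabla\,\Div$ and observe $(\bx\times\nabla)\mathscr{D}_{c,\bx}^{(\af)}=(\bx\times\nabla)\mathscr{L}_{c,\bx}^{(\af)}+2(\af+1)(\bx\times\nabla)\nabla\,\Div$. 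A short componentwise check (or the identity $\bx\times\nabla(\partial_{x_j} u)=\partial_{x_j}(\bx\times\nabla u)-\bs e_j\times\nabla u$ summed appropriately against $\partial_{x_j}$) shows $(\bx\times\nabla)\nabla\,\Div = (\bx\times\nabla)$ composed with something that, combined with \eqref{L-curl}, yields exactly $(\bx\times\nabla)[\mathscr{L}_{c,\bx}^{(\af)}+2(\af+1)]$. I would double-check the constant by testing both sides on $\bs Y_\ell^{n,3}$-type fields, where all operators act as known scalars via \eqref{eq:LaplaceBeltrami} and Lemma \ref{dxbop}.

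For \eqref{D-curl2}: start from $\mathscr{D}_{c,\bx}^{(\af)}\nabla\times(\bx\times\nabla)$. Since $\Div\nabla\times=0$ by \eqref{opert-1i}, the $\nabla\,\Div$ part of $\mathscr{D}$ acting on the left of $\nabla\times(\bx\times\nabla)$ contributes $2(\af+1)\nabla\,\Div\,\nabla\times(\bx\times\nabla)E$; but $\Div\nabla\times(\bx\times\nabla)E=0$ only kills it when it acts \emph{on the right} — here $\nabla\,\Div$ is applied after, so instead I use $\mathscr{D}=\mathscr{L}+2(\af+1)\nabla\,\Div$ to get $\mathscr{D}_{c,\bx}^{(\af)}\nabla\times(\bx\times\nabla) = \mathscr{L}_{c,\bx}^{(\af)}\nabla\times(\bx\times\nabla)+2(\af+1)\nabla\,\Div\,\nabla\times(\bx\times\nabla)$. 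The second summand vanishes by \eqref{opert-1i}. Then I invoke \eqref{L-curl3} from Lemma \ref{Lxprop}:
\begin{align*}
\mathscr{L}_{c,\bx}^{(\af)}\nabla\times\bx\times\nabla = \nabla\times\bx\times\nabla[\mathscr{L}_{c,\bx}^{(\af-1)}-(2\af+2)]+2c^2\|\bx\|\nabla_0\,.
\end{align*}
It remains to reconcile the $-(2\af+2)\nabla\times\bx\times\nabla$ term with the claimed $-2(\af+1)\nabla\Delta_0$. Using \eqref{opert-1E}, $\nabla\times(\bx\times\nabla)=\bx\Delta-(\bx\cdot\nabla+2)\nabla$, and the commutation relations for $\nabla$ with $\bx\cdot\nabla$ and $\Delta_0$, one rewrites $(2\af+2)\,\nabla\times\bx\times\nabla$ acting on scalars in terms of $\nabla\Delta_0$ modulo terms that are absorbed; alternatively — and this is the safer path — I would verify \eqref{D-curl2} directly by substituting the reduced Cartesian form of $\mathscr{D}$ and pushing $\partial_{x_i}$ through, exactly paralleling the $\partial_{x_i}\mathscr{L}_{c,\bx}^{(\af)}$ computation in the proof of Lemma \ref{Lxprop}, and collecting the $\nabla\Delta_0$ remainder at the end. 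The main obstacle throughout is purely the identification of the reduced form of $\mathscr{D}_{c,\bx}^{(\af)}$ and tracking the $\af$-dependent constants; once those are pinned down, \eqref{D-curl} and \eqref{D-curl2} follow by the same operator manipulations already used for Lemma \ref{Lxprop}.
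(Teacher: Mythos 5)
Your overall strategy --- reduce $\mathscr{D}_{c,\bx}^{(\af)}$ to $\mathscr{L}_{c,\bx}^{(\af)}$ plus correction terms and then invoke Lemma \ref{Lxprop} --- is exactly the paper's, but your reduced form of $\mathscr{D}_{c,\bx}^{(\af)}$ is wrong, and this is not a bookkeeping slip that can be absorbed later: it makes both identities underivable. Carrying out the Leibniz rule, $(1-\|\bx\|^2)^{-\af}\nabla\times(1-\|\bx\|^2)^{\af+1}\nabla\times = (1-\|\bx\|^2)(\nabla\times)^2 - 2(\af+1)\,\bx\times\nabla\times$, and using $(\nabla\times)^2=\nabla\,\Div-\Delta$ together with $\bx\times\nabla\times=\nabla\,\bx\cdot-(\bx\cdot\nabla+1)$, one finds
\begin{equation*}
\mathscr{D}_{c,\bx}^{(\af)}=\mathscr{L}_{c,\bx}^{(\af)}+2(\af+1)+(1-\|\bx\|^2)\nabla\,\Div-2(\af+1)\nabla\,\bx\cdot\,,
\end{equation*}
not $\mathscr{L}_{c,\bx}^{(\af)}+2(\af+1)\nabla\,\Div$. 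Your version drops the zeroth-order constant $2(\af+1)$ and the first-order term $-2(\af+1)\nabla\,\bx\cdot$, and gives the $\nabla\,\Div$ term a wrong, constant coefficient in place of $(1-\|\bx\|^2)$.

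The two dropped terms are precisely what you then find yourself unable to account for. For \eqref{D-curl}: with your decomposition, $\nabla\,\Div$ annihilates $\bx\times\nabla$ outright by \eqref{opert-1d}, so you would obtain $\mathscr{D}_{c,\bx}^{(\af)}(\bx\times\nabla)=(\bx\times\nabla)\mathscr{L}_{c,\bx}^{(\af)}$ with no possible source for the $+2(\af+1)$; the test on $\bs Y^{n,3}_\ell$ that you propose would have exposed this. In the correct decomposition, both $(1-\|\bx\|^2)\nabla\,\Div$ and $\nabla\,\bx\cdot$ kill $\bx\times\nabla$ (again by \eqref{opert-1d}), and the explicit constant $2(\af+1)$ survives, giving the claim together with \eqref{L-curl}. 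For \eqref{D-curl2}: the term you cannot ``reconcile,'' namely $-(2\af+2)\nabla\times\bx\times\nabla$ coming from \eqref{L-curl3}, is cancelled exactly by the explicit constant $+2(\af+1)$ of the correct decomposition, while the claimed remainder $-2(\af+1)\nabla\Delta_0$ is produced by the term you dropped, since $\nabla\,\bx\cdot\,\nabla\times\bx\times\nabla=\nabla\Delta_0$ by \eqref{opert-1c}; the $(1-\|\bx\|^2)\nabla\,\Div$ piece dies against $\nabla\times$. Neither contribution can be conjured from $2(\af+1)\nabla\,\Div$ alone, so as written the proof does not close. Once the decomposition is corrected, the rest of your outline goes through and coincides with the paper's argument.
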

\begin{proof} Firstly,  it can be readily shown  that
\begin{align}
\label{CurlCurl}
&(\Curl)^2 = \nabla  \nabla \cdot  - \Delta,
\\
\label{xxCurl}
& \bx \times \Curl  = \nabla \bx \cdot - (\bx \cdot \nabla +1).
\end{align}
From \eqref{Lap-Bel},  \eqref{CurlCurl}, \eqref{xxCurl} and \eqref{Lc}, we observe that
\begin{align}
\begin{split}
\mathscr{D}_{c,\bx}^{(\af)}=&(1-|\bs x|^2)^{-\alpha}  \Curl (1-|\bs x|^2)^{\alpha+1}  \Curl  -\Delta_0 + c^2 \|\bx \|^2
 \\
 = & \big[ (1-|\bs x|^2)^{-\alpha}  [ (1-|\bs x|^2)^{\alpha+1} \Curl -  2(\alpha+1) (1-|\bs x|^2)^{\alpha}  \bx \times ]  \Curl
\\
  &-[ \|\bx\|^2 \Delta - \bx \cdot \nabla (\bx \cdot \nabla+1) ]+ c^2 \|\bx \|^2
 \\
 =&  (1-|\bs x|^2) [ \nabla  \nabla \cdot  - \Delta ]  -  2(\alpha+1) [ \nabla \bx \cdot - (\bx \cdot \nabla +1) ]
 \\
& - \|\bx\|^2 \Delta + \bx \cdot \nabla (\bx \cdot \nabla+1) + c^2 \|\bx \|^2
 \\
 =&  \big[( \bs x \cdot \nabla +2\alpha+2) (\bs x \cdot \nabla+1)  - \Delta + c^2 \|\bx \|^2  \big]
+ \big[(1-|\bs x|^2) \nabla  \nabla \cdot   -2(\alpha+1) \nabla \bx \cdot  \big]
\\
=& \mathscr{L}_{c,\bx}^{(\af)} +2(\alpha+1)  + \big[(1-|\bs x|^2) \nabla  \nabla \cdot   -2(\alpha+1) \nabla \bx \cdot  \big].
\end{split}
\label{Dc}
\end{align}
Then, we have
\begin{align*}
  \mathscr{D}_{c,\bx}^{(\af)}  (\bx \times \nabla )
\overset{\eqref{Dc}}=\,& [\mathscr{L}_{c,\bx}^{(\af)} +2(\alpha+1)]  (\bx \times \nabla )
+ \big[(1-|\bs x|^2) \nabla  \nabla \cdot   -2(\alpha+1) \nabla \bx \cdot  \big]  (\bx \times \nabla )
  \\
\overset{\eqref{opert-1d}}=& (\bx \times \nabla ) [\mathscr{L}_{c,\bx}^{(\af)} +2(\alpha+1)],
\end{align*}
which yields the identity  \eqref{D-curl}.

Now, it remains to establish \eqref{D-curl2}.  Since $\nabla \cdot \nabla \times =0$,
we find from \eqref{Dc}, \eqref{L-curl3} and \eqref{opert-1c} that
\begin{align*}
\mathscr{D}_{c,\bx}^{(\af)} \nabla\times \bx \times \nabla  = &  [\mathscr{L}_{c,\bx}^{(\af)}  +2(\alpha+1)] \nabla\times   \bx \times \nabla
-2(\alpha+1) \nabla \bx \cdot   \nabla\times   \bx \times \nabla
\\
 = & \nabla\times  \bx \times \nabla \mathscr{L}_{c,\bx}^{(\af-1)}   +2c^2 \|\bx \| \nabla_0
  -2(\alpha+1) \nabla  \Delta_0,
\end{align*}
which gives the desired result \eqref{D-curl2}.
  \end{proof}

Obviously, using the formula \eqref{D-curl},  we
deduce from \eqref{varphichi} that
\begin{align*}
\begin{split}
\mathscr{D}_{c,\bx}^{(\af)}{\bs\psi}^{n}_{k,\ell}(\bx;c)&=\mathscr{D}_{c,\bx}^{(\af)} \bx \times \nabla \psi^{n}_{k,\ell} (\bx;c)
\overset{\eqref{D-curl}}=  (\bx \times \nabla ) [\mathscr{L}_{c,\bx}^{(\af)} +2(\alpha+1)]  \psi^{n}_{k,\ell} (\bx;c)
\\
&\overset{\eqref{varphichi}}= [\chi^{(\alpha)}_{n,k}(c)+2\af+2] \bx \times \nabla\psi^{n}_{k,\ell} (\bx;c) = [\chi^{(\alpha)}_{n,k}(c)+2\af+2]  {\bs \psi}^{n}_{k,\ell} (\bx;c).
\end{split}
\end{align*}

\begin{rem}The above argument states that,
$(\chi+2\alpha+2,\bx \times \nabla \phi)$ is a divergence free vectorial eigen pair of  $\mathscr{D}_{c,\bx}^{(\af)}$
if  $(\chi,\phi)$ is an eigen pair of $\mathscr{L}_{c,\bx}^{(\af)}$ with  $\bx \times \nabla \phi\ne 0$; conversely,  $(\chi,\phi)$ is an eigen pair of $\mathscr{L}_{c,\bx}^{(\af)}$ if $(\chi+2\alpha+2,\bx \times \nabla \phi)$ is a
vectorial eigen pair of  $\mathscr{D}_{c,\bx}^{(\af)}$.
\end{rem}

Next,  we  intend to show that ${\mathscr D}_{c,\bx}^{(\alpha)}$ does not possess any
 divergence free vectorial  eigenfunction of form
\begin{equation*}
 {\bs E} (\bx) =\nabla \times (\bx \times \nabla ) E(\bx),
 \quad  E(\bx) = \theta(r) Y_{\ell}^{n}(\hat \bx), \qquad n\in \NN.
\end{equation*}
Otherwise, we observe
from \eqref{D-curl2} that
\begin{align*}
&\nabla \cdot \mathscr{D}_{c,\bx}^{(\af)} \nabla\times (\bx \times \nabla)
 \overset{\eqref{opert-1i}}=2c^2 \nabla \cdot ( \|\bx \| \nabla_0)  -2(\alpha+1) \nabla \cdot  \nabla \Delta_0
 \\
 \overset{\eqref{opert-1b}}=\,& 2c^2 (\frac1r\nabla_0 +\hat \bx \partial_r) \cdot ( r \nabla_0)  -2(\alpha+1) \Delta \Delta_0
 \\
 \overset{\eqref{opert-1a}}=\,& 2\big[c^2 - (\alpha+1) \Delta] \Delta_0
  \overset{\eqref{Lap-Bel}}= 2\Big[c^2 - (\alpha+1) \big(\partial_r^2 + \frac2r\partial_r + \frac{\Delta_0}{r^2}\big)\Big] \Delta_0,
\end{align*}
and
\begin{align*}
\begin{split}
0=\,&\chi  \nabla \cdot  {\bs E} (\bx)  =\nabla \cdot \mathscr{D}_{c,\bx}^{(\af)} {\bs E} (\bx) = \nabla \cdot \mathscr{D}_{c,\bx}^{(\af)}  \nabla\times \bx \times \nabla [ \theta(r) Y_{\ell}^{n}(\hat \bx)]
\\
 \overset{\eqref{eq:LaplaceBeltrami}}
 =\,& -2 n(n+1)  \big[c^2    - (\alpha+1) (  \partial_r^2+\frac{2}{r}\partial_r -\frac{n(n+1)}{r^2} )  \big] \theta(r)  Y^n_{\ell}(\hat {\bx}).
 \end{split}
\end{align*}
Equivalently,
\begin{align}
\begin{split}
\big[c^2    - (\alpha+1) (  \partial_r^2+\frac{2}{r}\partial_r -\frac{n(n+1)}{r^2} )  \big] \theta(r) =0
 \end{split}
 \label{DivD}
\end{align}
Meanwhile,  we deuce that
\begin{align*}
&  \chi   \bx \cdot \nabla\times \bx \times \nabla E
=\bx \cdot \mathscr{D}_{c,\bx}^{(\af)} \nabla\times \bx \times \nabla E
\\
\overset{ \eqref{D-curl2}}=\,& \bx \cdot \big[ \nabla\times  \bx \times \nabla \mathscr{L}_{c,\bx}^{(\af-1)}
 +2c^2 \cdot  \|\bx \| \nabla_0
   -2(\alpha+1) \cdot \nabla  \Delta_0\big]E
 \\
\overset{\eqref{opert-1c}}{\underset{\eqref{opert-1a}}=}\,& \Delta_0 [\mathscr{L}_{c,\bx}^{(\af-1)}  - 2(\alpha+1)r\partial_r]E
 \\
 \overset{\eqref{Ldef}}=\,& \Delta_0[(r^2-1)\partial^2r-\frac{2}{r}\partial r-\frac{1}{r^2} \Delta_0]E
 \\
 =\,& \Delta_0[(r^2-1)  (  \partial_r^2+\frac{2}{r}\partial_r +\frac{\Delta_0}{r^2} )  - 2r\partial_r - \Delta_0 ]E,
 \end{align*}
and,
\begin{align*}
-n(n+1)\chi \,&   \theta(r)Y^n_{\ell}(\hat {\bx})
\overset{\eqref{eq:LaplaceBeltrami}}{\underset{\eqref{opert-1c}}=}   \chi   \bx \cdot \nabla\times \bx \times \nabla { E}({\bx})
=\bx \cdot \mathscr{D}_{c,\bx}^{(\af)} \nabla\times \bx \times \nabla   { E}( {\bx})
\\
 \overset{\eqref{eq:LaplaceBeltrami}}=\,& -n(n+1) \big[ (r^2-1) \big( \partial_r^2+\frac{2}{r}\partial_r -\frac{n(n+1)}{r^2} \big)  - 2r\partial_r +n(n+1) \big] \theta(r)Y^n_{\ell}(\hat {\bx})
\\
 \overset{\eqref{DivD}}=\,& -n(n+1) \big[ (r^2-1) \frac{c^2}{\alpha+1} - 2r\partial_r +n(n+1) \big] \theta(r)Y^n_{\ell}(\hat {\bx}) .
\end{align*}
As a result,
\begin{align}
\label{DivX}
\big[(r^2-1) \frac{c^2}{\alpha+1} - 2r\partial_r +n(n+1)\big] \theta(r) =  \chi \theta(r).
\end{align}
This, in return, gives
  \begin{align*}
 0\overset{\eqref{DivD}}=\,&  \Big[r^2\partial_r^2 +2r\partial_r-n(n+1) -\frac{c^2r^2}{\alpha+1} \Big]\theta(r)
 =\Big[ r\partial_r (r\partial_r+ 1) -n(n+1) -\frac{c^2r^2}{\alpha+1} \Big]\theta(r)
 \\
 \overset{\eqref{DivX}}=\,& \frac{r}2 \partial_r \Big[   \frac{c^2(r^2-1)}{\alpha+1} +n(n+1) -\chi  + 2\Big] \theta(r)
 -\Big[n(n+1) +\frac{c^2r^2}{\alpha+1} \Big]\theta(r)
  \\
  =\,& \frac{r}2 \Big[   \frac{c^2(r^2-1)}{\alpha+1} +n(n+1) -\chi  + 2\Big] \partial_r  \theta(r)
  -n(n+1)   \theta(r)
  \\
  \overset{\eqref{DivD}}=\,& \frac{1}4 \Big[   \frac{c^2(r^2-1)}{\alpha+1} +n(n+1) -\chi  + 2\Big]  \Big[   \frac{c^2(r^2-1)}{\alpha+1} +n(n+1) -\chi \Big] \theta(r)
  -n(n+1)   \theta(r)
  \\
  =\,& \frac{1}4 \Big[   \frac{c^2(r^2-1)}{\alpha+1} +n(n-1)-\chi \Big]  \Big[   \frac{c^2(r^2-1)}{\alpha+1} +(n+2)(n+1) -\chi \Big]  \theta(r),
  \end{align*}
  which  states that  $ \theta(r)=0$  for $\alpha>-1$ and $c>0$, thus $\bs E$ is not an eigenfunction.

With the aid of the above discussion, we can obtain the following  result.
\begin{thm}\label{VBPSWFs2} For real $\alpha>-1$ and real $c\ge 0,$
the divergence free ball PSWFs $\swf_{k,\ell}(\bx; c),\, (\ell,n)\in \Upsilon,\,  k\in \NN_0$, are  eigenfunctions of the differential operator $\mathscr{D}_{c,\bx}^{(\af)}$,
\begin{equation}\label{varphichi2}
\mathscr{D}_{c,\bx}^{(\af)}\swf_{k,\ell}(\bx; c)=[\chi_{n,k}^{(\af)}(c) + 2\alpha+2]\, \swf_{k,\ell}(\bx; c),\quad  \bx\in \ball.
\end{equation}
\end{thm}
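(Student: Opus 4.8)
The plan is to reduce the vectorial eigenproblem for $\mathscr{D}_{c,\bx}^{(\af)}$ to the scalar eigenproblem for $\mathscr{L}_{c,\bx}^{(\af)}$ already solved by the scalar ball PSWFs in Lemma \ref{ThZhLi}, the engine being the intertwining relation \eqref{D-curl} of Lemma \ref{Dxprop} together with the representation \eqref{VPSWF}.

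First I would start from \eqref{VPSWF}, writing $\swf_{k,\ell}(\bx;c)=(\bx\times\nabla)\psi^{\af,n}_{k,\ell}(\bx;c)$, and apply $\mathscr{D}_{c,\bx}^{(\af)}$. Invoking \eqref{D-curl} to move the operator past $\bx\times\nabla$ at the cost of the additive shift $2(\alpha+1)$, I get
\[
\mathscr{D}_{c,\bx}^{(\af)}\swf_{k,\ell}(\bx;c)
=(\bx\times\nabla)\big[\mathscr{L}_{c,\bx}^{(\af)}+2(\alpha+1)\big]\psi^{\af,n}_{k,\ell}(\bx;c).
\]
Then \eqref{varphichi} turns the bracket into the scalar multiplier $\chi^{(\af)}_{n,k}(c)+2\alpha+2$, and since $\bx\times\nabla$ is linear this scalar pulls straight back out, giving the asserted eigenrelation. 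The only extra point I would verify is that $\swf_{k,\ell}$ is genuinely nonzero for $(\ell,n)\in\Upsilon$: because $\Delta_0 Y^n_\ell=-n(n+1)Y^n_\ell$ with $n\ge1$, the identity $(\bx\times\nabla)\cdot(\bx\times\nabla)=\Delta_0$ of \eqref{opert-1j} forces $\bx\times\nabla\psi^{\af,n}_{k,\ell}\ne0$, exactly as in the proof of Theorem \ref{VGpswfP}.

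To see in addition that these exhaust the divergence free eigenfunctions of $\mathscr{D}_{c,\bx}^{(\af)}$ in $L^2_{\varpi_\af}(\ball)^3$, I would use the structural decomposition \eqref{DFconst}: every divergence free field splits into a $\bx\times\nabla(\cdot)$ part and a $\nabla\times(\bx\times\nabla)(\cdot)$ part, so it remains to rule out an eigenfunction $\bs E=\nabla\times(\bx\times\nabla)E$ with $E=\theta(r)Y^n_\ell(\hat\bx)$, $n\ge1$. Applying $\mathscr{D}_{c,\bx}^{(\af)}$ through \eqref{D-curl2}, I would then (i) take the divergence, using $\nabla\cdot\nabla\times=0$ (so \eqref{opert-1i} kills the leading term) and $\hat\bx\cdot\nabla_0=0$ (\eqref{opert-1a}), to obtain the second-order ODE \eqref{DivD} for $\theta$; and (ii) take $\bx\cdot$, using \eqref{opert-1c}, \eqref{opert-1a} and the third form of $\mathscr{L}_{\bx}^{(\af-1)}$ in \eqref{Ldef}, to obtain after substituting \eqref{DivD} the first-order relation \eqref{DivX}. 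Feeding \eqref{DivX} back into \eqref{DivD} and differentiating once more should collapse all derivatives of $\theta$, leaving the purely algebraic identity $\tfrac14\big[\tfrac{c^2(r^2-1)}{\alpha+1}+n(n-1)-\chi\big]\big[\tfrac{c^2(r^2-1)}{\alpha+1}+(n+2)(n+1)-\chi\big]\theta(r)=0$, which forces $\theta\equiv0$ for $\alpha>-1$, $c>0$.

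The positive direction is essentially immediate once Lemma \ref{Dxprop} is available; the real work, and the main obstacle, is the algebraic elimination in the uniqueness step — arranging the pair \eqref{DivD}--\eqref{DivX} so that every derivative of $\theta$ cancels and one is left with a quadratic-in-$r$ coefficient that cannot vanish identically on $(0,1)$. A secondary technical hurdle is establishing the commutation identities \eqref{D-curl}--\eqref{D-curl2} themselves, which I would do by rewriting $\mathscr{D}_{c,\bx}^{(\af)}$ in the non-divergence form \eqref{Dc} and then using $\nabla\cdot(\bx\times\nabla)=0=\bx\cdot(\bx\times\nabla)$ from \eqref{opert-1d} and the commutation \eqref{L-curl3} of $\mathscr{L}_{c,\bx}^{(\af)}$ with $\nabla\times\bx\times\nabla$.
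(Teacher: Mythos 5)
Your proposal is correct and follows essentially the same route as the paper: the positive direction is the identical three-line computation combining the representation \eqref{VPSWF} with the intertwining relation \eqref{D-curl} and the scalar eigenrelation \eqref{varphichi}, and your exhaustion step reproduces the paper's elimination argument via \eqref{DivD} and \eqref{DivX} leading to the same factored algebraic identity that forces $\theta\equiv 0$. The only additions beyond the paper's text are your explicit check that $\swf_{k,\ell}\neq 0$ (a worthwhile remark the paper leaves implicit) and your sketch of how Lemma \ref{Dxprop} itself is proved.
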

\section{Numerical evaluation of the  divergence free ball PSWFs}\label{sec5}

In this section, we present an efficient algorithm to evaluate the divergence free vectorial ball PSWFs and their associated eigenvalues.
\subsection{Spectrally accurate Bouwkamp algorithm}

\begin{defn}
With the polynomials $P_{k,\ell}^{\af,n}(\bx)$ on a ball $\ball$ in $\mathbb{R}^3$, we define  one kind of vectorial ball polynomials $\P_{k,\ell}^{\af,n}(\bx)$,
\begin{equation*}
{\P}_{k,\ell}^{\af,n}(\bx)=(\bx \times \nabla)P_{k,\ell}^{\af,n}(\bx),\quad \bx\in \ball, \;\;  (\ell,n) \in \Upsilon,\;\; k\in \NN_0.
\end{equation*}
\end{defn}
Next, we introduce below some basic properties of  the vector ball polynomials $\bs{P}_{k,\ell}^{\alpha,n}(\bs x)$, which be used for the computation of divergence free vectorial ball PSWFs.
\begin{lemma}
 $\bs{P}_{k,\ell}^{\alpha,n}(\bs x)$ satisfies the divergence free constraint, i.e.,
\begin{align}\label{VBP-divfree}
   \nabla\cdot \bs{P}_{k,\ell}^{\alpha,n}(\bs x) =0,\;\;\bx\in \ball,\;\;  (\ell,n) \in \Upsilon,\;\; k\in \NN_0.
    \end{align}
 \end{lemma}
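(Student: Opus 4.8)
The plan is to read this off directly from the operator identity \eqref{opert-1d} in Lemma \ref{calculus}, which asserts that $\nabla\cdot({\bs x}\times\nabla)=0$ as a composite operator. Under the right-associativity convention for composite differential operators adopted in the paper, and since $P_{k,\ell}^{\af,n}$ is a polynomial (hence smooth) on $\ball$, the pointwise identity $\nabla\cdot\big[(\bx\times\nabla)u\big]=\big[\nabla\cdot(\bx\times\nabla)\big]u$ is legitimate for the scalar field $u=P_{k,\ell}^{\af,n}$. Therefore
\[
\nabla\cdot\bs{P}_{k,\ell}^{\af,n}(\bx)=\nabla\cdot\big[(\bx\times\nabla)P_{k,\ell}^{\af,n}(\bx)\big]=\big[\nabla\cdot(\bx\times\nabla)\big]P_{k,\ell}^{\af,n}(\bx)\overset{\eqref{opert-1d}}{=}0,
\]
for all $\bx\in\ball$, $(\ell,n)\in\Upsilon$, $k\in\NN_0$. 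That is essentially the entire argument.

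If one prefers a coordinate-based verification as a sanity check, I would write $P_{k,\ell}^{\af,n}(\bx)=J_{k}^{(\af,n+\frac12)}(2r^2-1)\,r^{n}\,Y_\ell^{n}(\hat\bx)=:f(r)\,Y_\ell^{n}(\hat\bx)$ in spherical-polar coordinates. Since $\bx\times\nabla=\hat\bx\times\nabla_0$ by \eqref{opert-1D} and $\nabla_0$ annihilates radial functions, the Leibniz rule gives $(\bx\times\nabla)P_{k,\ell}^{\af,n}(\bx)=f(r)\,(\bx\times\nabla)Y_\ell^{n}(\hat\bx)=f(r)\,\bY_\ell^{n,3}(\hat\bx)$, so that $\bs{P}_{k,\ell}^{\af,n}$ is of the pure mode-3 form to which the divergence relation \eqref{div-3} applies, yielding $\nabla\cdot(f(r)\bY_\ell^{n,3}(\hat\bx))=0$ immediately.

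There is no genuine obstacle here: the statement is a one-line corollary of Lemma \ref{calculus} (or, alternatively, of the vector-spherical-harmonic calculus \eqref{div-3}). The only point requiring a moment's care is the interpretation of the composite operator $\nabla\cdot(\bx\times\nabla)$, which is already pinned down by the associativity convention stated in the paper, so the reduction $\nabla\cdot\bs{P}_{k,\ell}^{\af,n}=[\nabla\cdot(\bx\times\nabla)]P_{k,\ell}^{\af,n}$ is rigorous.
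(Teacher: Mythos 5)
Your proposal is correct and follows essentially the same route as the paper: the paper's one-line proof writes $\nabla\cdot(\bx\times\nabla P^{\af,n}_{k,\ell})=-\nabla\cdot(\nabla\times \bx P^{\af,n}_{k,\ell})=0$ using \eqref{opert-1D} and the vanishing of the divergence of a curl, which is exactly the content of the identity \eqref{opert-1d} you invoke directly. Your remark on the right-associativity convention and the alternative check via \eqref{div-3} are both sound but add nothing beyond the paper's argument.
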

\begin{proof}
A direct calculation leads to
\begin{align*}
 \nabla\cdot\P^{\alpha,n}_{k,\ell}({\bs x})= \nabla\cdot(\bs x \times\nabla P^{\alpha,n}_{k,\ell}({\bs x}))=-\nabla\cdot(\nabla\times\bs x P^{\alpha,n}_{k,\ell}({\bs x}))=0.
  \end{align*}
The proof is now completed.
\end{proof}

\begin{thm}\label{Th-slp}
The corresponding eigenvalue equation based on the $curl$-operator for $\bs{P}_{k,\ell}^{\alpha,n}(\bs x)$ reads
\begin{align}\label{FVBP-sl}
   \begin{aligned}
\mathscr{L}_{\bx}^{(\af)}{\P}_{k,\ell}^{\alpha,n}(\bs x):&= \big(-(1-|\bs x|^2)^{-\alpha}  \nabla \cdot
 (1-|\bs x|^2)^{\alpha+1}  \nabla
 -\ \Delta_0\big)\P^{\alpha,n}_{k,\ell}(\bs x)
 \\
&
 =(n+2k)(n+2k+2\alpha+3) \P^{\alpha,n}_{k,\ell}(\bs x).
  \end{aligned}
  \\
   \begin{aligned}\label{DxP}
\mathscr{D}_{\bx}^{(\af)}{\P}_{k,\ell}^{\alpha,n}(\bs x):&= \big((1-|\bs x|^2)^{-\alpha}  \nabla\times
 (1-|\bs x|^2)^{\alpha+1}  \nabla\times
 -\ \Delta_0\big)\P^{\alpha,n}_{k,\ell}(\bs x)
 \\
&
 =(n+2k+1)(n+2k+2\alpha+2) \P^{\alpha,n}_{k,\ell}(\bs x).
  \end{aligned}
  \end{align}
 \end{thm}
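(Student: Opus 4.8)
The plan is to derive both identities from the known spectral relation for the scalar ball polynomials, $\mathscr{L}_{\bx}^{(\af)}P_{k,\ell}^{\af,n}=\gamma_{n+2k}^{(\af)}P_{k,\ell}^{\af,n}$ with $\gamma_m^{(\af)}=m(m+2\af+3)$ (Lemma~\ref{dxbop}), by conjugating with the first-order operator $\bx\times\nabla$ and invoking the commutator identities of Lemmas~\ref{Lxprop} and~\ref{Dxprop} specialized to $c=0$. I would first record that the operators written $\mathscr{L}_{\bx}^{(\af)}$ and $\mathscr{D}_{\bx}^{(\af)}$ in the statement are exactly $\mathscr{L}_{c,\bx}^{(\af)}$ and $\mathscr{D}_{c,\bx}^{(\af)}$ at $c=0$, which is immediate from \eqref{Ldef}, \eqref{opDxB} and the definition of $\mathscr{D}_{c,\bx}^{(\af)}$, so that Lemmas~\ref{Lxprop} and~\ref{Dxprop} apply verbatim.

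For \eqref{FVBP-sl}, I would use \eqref{L-curl} at $c=0$, namely $\mathscr{L}_{\bx}^{(\af)}(\bx\times\nabla)=(\bx\times\nabla)\mathscr{L}_{\bx}^{(\af)}$, to move the operator across:
\begin{equation*}
\mathscr{L}_{\bx}^{(\af)}{\P}_{k,\ell}^{\af,n}=\mathscr{L}_{\bx}^{(\af)}(\bx\times\nabla)P_{k,\ell}^{\af,n}=(\bx\times\nabla)\mathscr{L}_{\bx}^{(\af)}P_{k,\ell}^{\af,n}=\gamma_{n+2k}^{(\af)}\,{\P}_{k,\ell}^{\af,n},
\end{equation*}
and then read off $\gamma_{n+2k}^{(\af)}=(n+2k)(n+2k+2\af+3)$, which is the asserted eigenvalue.

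For \eqref{DxP}, I would use the commutation relation underlying \eqref{D-curl}, in the form $\mathscr{D}_{\bx}^{(\af)}(\bx\times\nabla)=(\bx\times\nabla)\bigl[\mathscr{L}_{\bx}^{(\af)}+2(\af+1)\bigr]$ at $c=0$, together with Lemma~\ref{dxbop} again, to get
\begin{equation*}
\mathscr{D}_{\bx}^{(\af)}{\P}_{k,\ell}^{\af,n}=(\bx\times\nabla)\bigl[\mathscr{L}_{\bx}^{(\af)}+2\af+2\bigr]P_{k,\ell}^{\af,n}=\bigl[\gamma_{n+2k}^{(\af)}+2\af+2\bigr]{\P}_{k,\ell}^{\af,n}.
\end{equation*}
It then only remains to verify the elementary identity $\gamma_m^{(\af)}+2\af+2=(m+1)(m+2\af+2)$ with $m=n+2k$; both sides expand to $m^2+(2\af+3)m+2\af+2$, which gives the eigenvalue $(n+2k+1)(n+2k+2\af+2)$.

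Since Lemmas~\ref{Lxprop} and~\ref{Dxprop} are already established, I do not expect any substantive obstacle: the argument is pure bookkeeping, and the only point I would take care to note is that $\bx\times\nabla$ does not annihilate $P_{k,\ell}^{\af,n}$ for $n\ge1$, so ${\P}_{k,\ell}^{\af,n}\ne0$ and the eigenvalue statements are not vacuous. Equivalently, one could bypass the computation entirely by observing that \eqref{FVBP-sl}--\eqref{DxP} are just the $c=0$ specializations of Theorems~\ref{VBPSWFs} and~\ref{VBPSWFs2} combined with the identification $\swf_{k,\ell}(\bx;0)=(\bx\times\nabla)P_{k,\ell}^{\af,n}(\bx)$, $\chi_{n,k}^{(\af)}(0)=\gamma_{n+2k}^{(\af)}$ from \eqref{gtog}.
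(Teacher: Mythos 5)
Your proposal is correct and follows essentially the same route as the paper: the paper also derives both identities by applying Lemma \ref{dxbop} together with the $c=0$ commutation relations of Lemmas \ref{Lxprop} and \ref{Dxprop} to move $\mathscr{L}_{0,\bx}^{(\af)}$ and $\mathscr{D}_{0,\bx}^{(\af)}$ past $\bx\times\nabla$. Your explicit check that $\gamma_{n+2k}^{(\af)}+2\af+2=(n+2k+1)(n+2k+2\af+2)$ and the remark that ${\P}_{k,\ell}^{\af,n}\ne 0$ for $n\ge 1$ are small additions the paper leaves implicit.
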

\begin{proof} Using Lemma \ref{dxbop} and Lemma \ref{Lxprop}, we obtain
\begin{equation*}
\begin{split}
\mathscr{L}_{\bx}^{(\af)}\P_{k,\ell}^{\af,n}(\bs x)&=\mathscr{L}_{0,\bx}^{(\af)}(\bx \times \nabla) P_{k,\ell}^{\af,n}(\bs x)=(\bx \times \nabla)\mathscr{L}_{0,\bx}^{(\af)} P_{k,\ell}^{\af,n}(\bs x),\\
&=(\bx \times \nabla)(n+2k)(n+2k+2\af+3)P_{k,\ell}^{\af,n}(\bs x).
\end{split}
\end{equation*}
We now prove \eqref{DxP}. From Lemma \ref{dxbop}, Lemma \ref{Lxprop} and  Lemma \ref{Dxprop}, we have
\begin{equation*}
\begin{split}
\mathscr{D}_{\bx}^{(\af)}{\P}_{k,\ell}^{\alpha,n}(\bs x)&=\mathscr{D}_{0,\bx}^{(\af)}(\bx \times \nabla)P_{k,\ell}^{\af,n}(\bx)=(\bx \times \nabla ) [\mathscr{L}_{0,\bx}^{(\af)} +2(\alpha+1)]\\
&=(\bx \times \nabla)(n+2k+1)(n+2k+2\af+2)P_{k,\ell}^{\af,n}(\bs x)
\end{split}
\end{equation*}
This ends the proof.
\end{proof}

We now use the Bouwkamp-type algorithm to evaluate $\big\{\bs\psi^{\alpha,n}_{k,\ell}, \chi_{n,k}^{(\alpha)}\big\}$ with $2k+n\le N$.
Following  the truncation rule  in \cite{Boyd.acm,wang2009new}, we set $ M=2N+2\alpha+30$ and
suppose   $\big\{\tilde {\bs\psi}^{\alpha,n}_{k,\ell}(\bx;c), \tilde \chi_{n,k}^{(\alpha)}\big\}$ to be the  approximation of $\big\{{\bs\psi}^{\alpha,n}_{k,\ell}(\bx;c), \chi_{n,k}^{(\alpha)}\big\}$ with
\begin{align*}
 \tilde{\bs\psi}^{\alpha,n}_{k,\ell}(\bs x;c)
= \sum_{j=0}^{\lceil \frac{M-n}{2} \rceil}
\tilde \beta^{n,k}_{j} \P^{\alpha,n}_{j,\ell}(\bs x),\quad  2k+n\le N.
\end{align*}
Denote  $K=\lceil \frac{M-n}{2} \rceil$. Thanks to  the Theorem \ref{VBPSWFs} and the  three-term recurrence relation \eqref{Jacobi} of the normalized Jacobi polynomials. The  coefficient $\{\tilde\beta^{n,k}_{j}\}$ can be equivalently deduced  from evaluating the radial component $\phi^{\af,n}_{k}$
of $\psi^{\alpha,n}_{k,\ell}(\bs x)= \phi^{\af,n}_{k}(2\|\bs x\|^2-1)Y^n_{\ell}(\bs x)$  in terms of
 Jacobi polynomials with the unknown coefficients $\{\tilde\beta_{j}^{n,k}\}$:
\begin{equation}\label{genexp}
\phi_{k}^{\alpha,n}(\eta; c)=\sum_{j=0}^{\infty}\tilde\beta_{j}^{n,k} J_{j}^{(\af,\beta_n)}(\eta).
\end{equation}Then the Bouwkamp-type algorithm gives  the following  finite algebraic eigen-system for $\{ \tilde \beta^{n,k}_{j}\}_{j=0}^K$ and
$\tilde \chi_{n,k}^{(\alpha)}$ (cf. \cite{ZhangLi.18}),
\begin{equation}\label{Feigsys}
({\bs A}-\tilde \chi_{n,k}^{(\alpha)}\cdot{\bs I})\vec{\beta}^{n,k}=\bs 0,
\end{equation}
where
$\vec{\beta}^{n,k}=(\tilde\beta^{n,k}_{0},\tilde\beta^{n,k}_{1},\dots,\tilde\beta^{n,k}_{K})$
and ${\bs A}$ is the $(K+1)\times (K+1)$ symmetric tridiagonal matrix whose nonzero entries are given by
\begin{equation}\label{333term}
\begin{split}
A_{j,j}=\gamma_{n+2j}^{(\af)}+\big(b_{j}^{(\af,\beta_n)}+1\big)\cdot \frac{c^2}{2};\quad  A_{j,j+1}=A_{j+1,j}=a_{j}^{(\af,\beta_n)} \cdot \frac{c^2}{2}, \quad  0\le j \le K.
\end{split}
\end{equation}

\subsection{Numerical results}
 We  plot some samples of the $\bs\psi_{k,\ell}^{\af,n}(\bs x;c)$  obtained from the previously described algorithms. Figure \ref{fg51} -\ref{fg52} visualize of
${\bs \psi}^{\af,n}_{k,\ell}(\bs x;c)$ with different $k,\ell,n,\alpha$ and $c$.
\begin{figure}\label{fg51}
  \centering
  \subfigure[$(\af, n, k, \ell)=(0,1,0,1).$]{
    \includegraphics[width=2.7in]{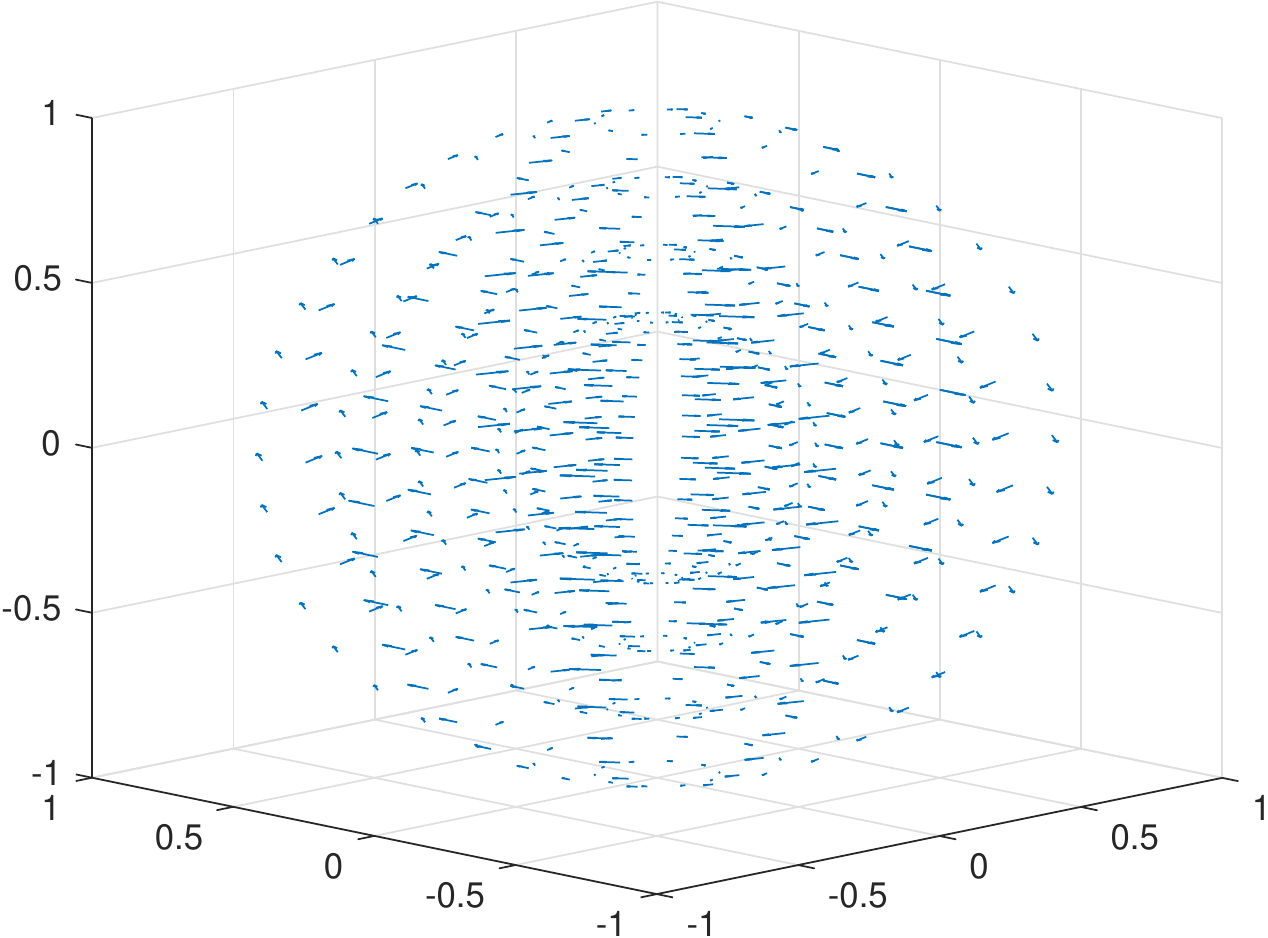}}
  \hspace{0in}
  \subfigure[$(\af, n, k,\ell)=(0,1,0,2).$]{
    \includegraphics[width=2.7in]{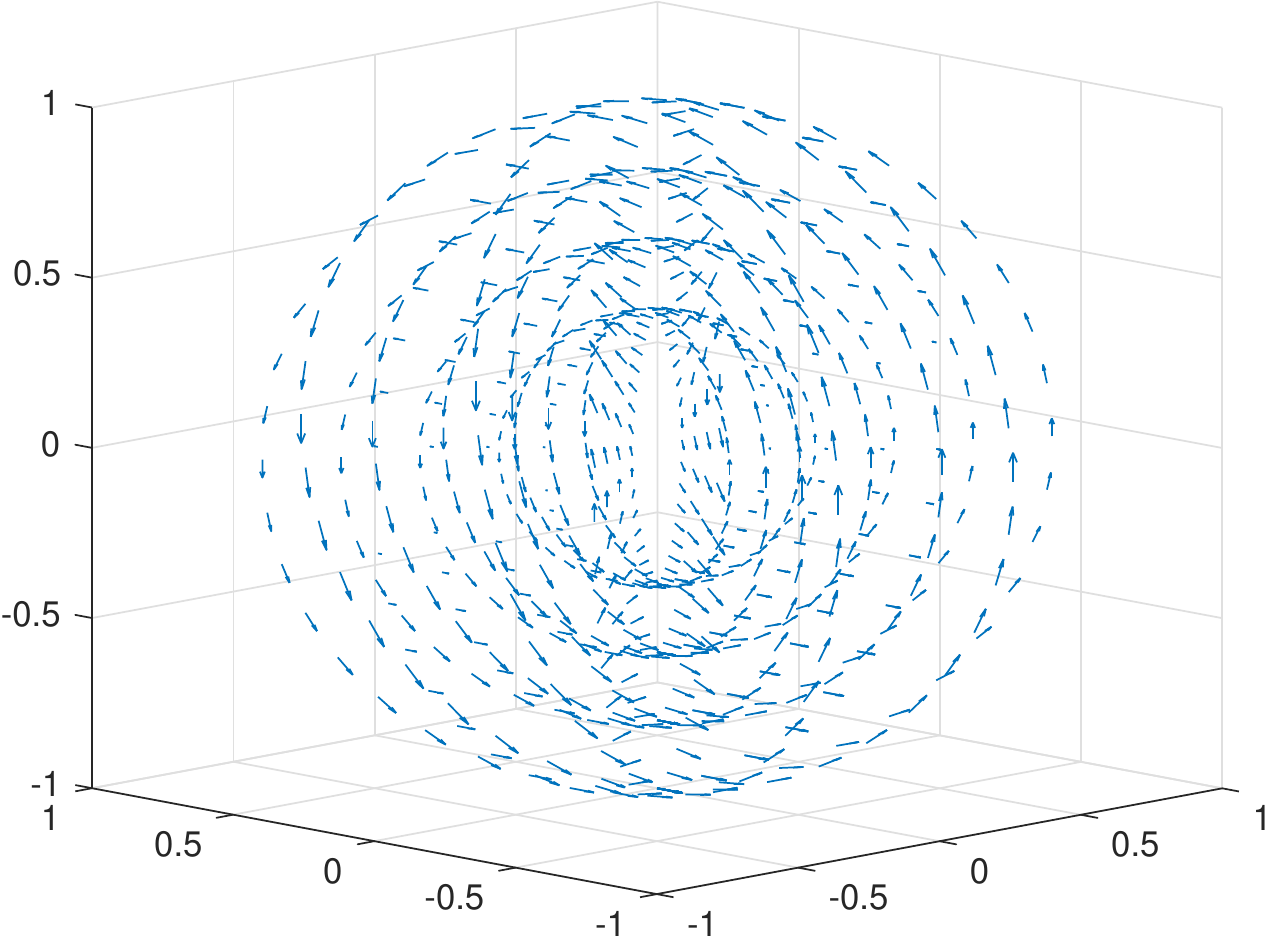}}
  \vfill
  \subfigure[$(\af, n, k,\ell)=(0,2,0,1).$]{
    \includegraphics[width=2.7in]{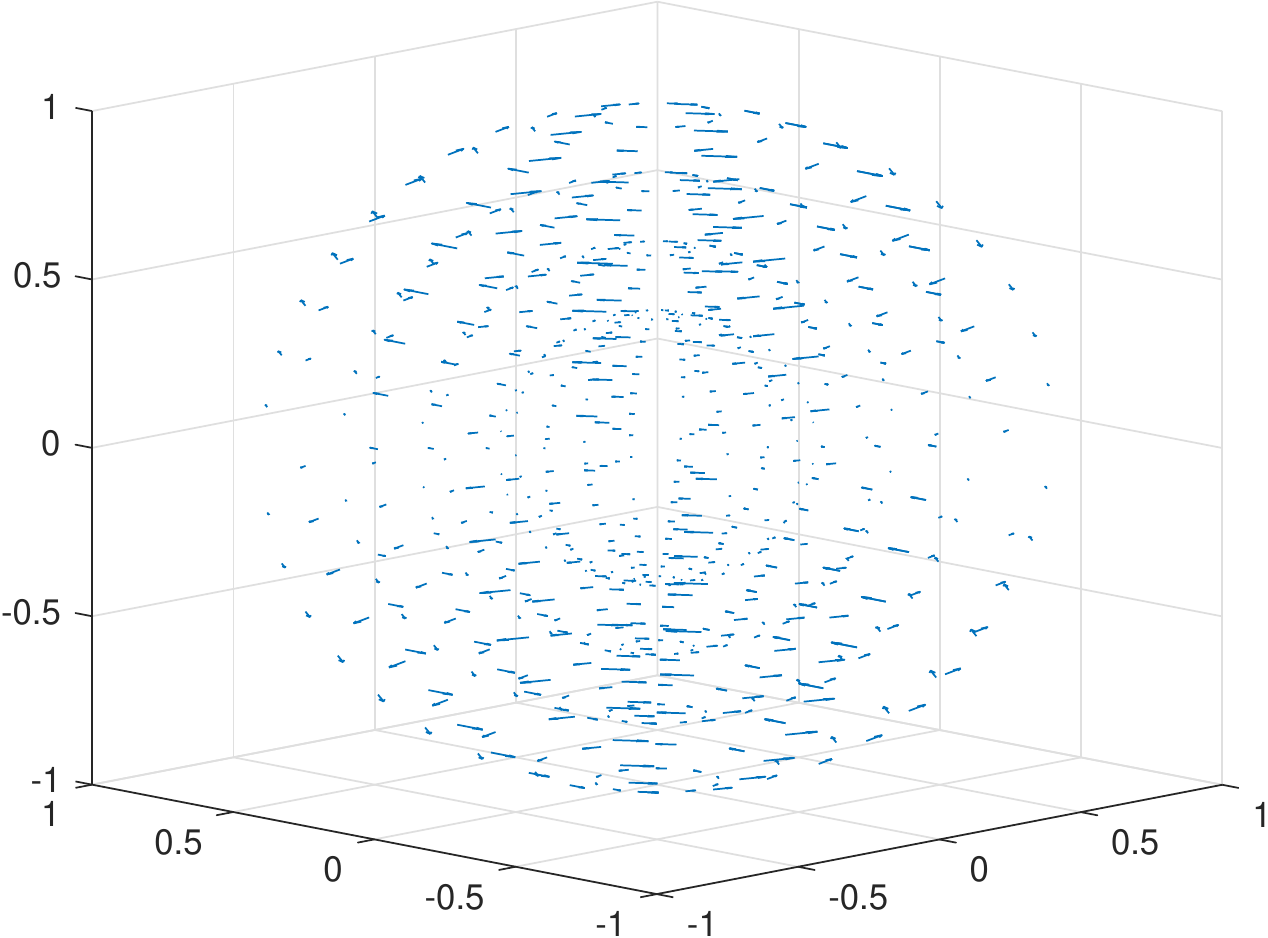}}
  \hspace{0in}
  \subfigure[$(\af, n, k, \ell)=(0,2,0,2).$]{
    \includegraphics[width=2.7in]{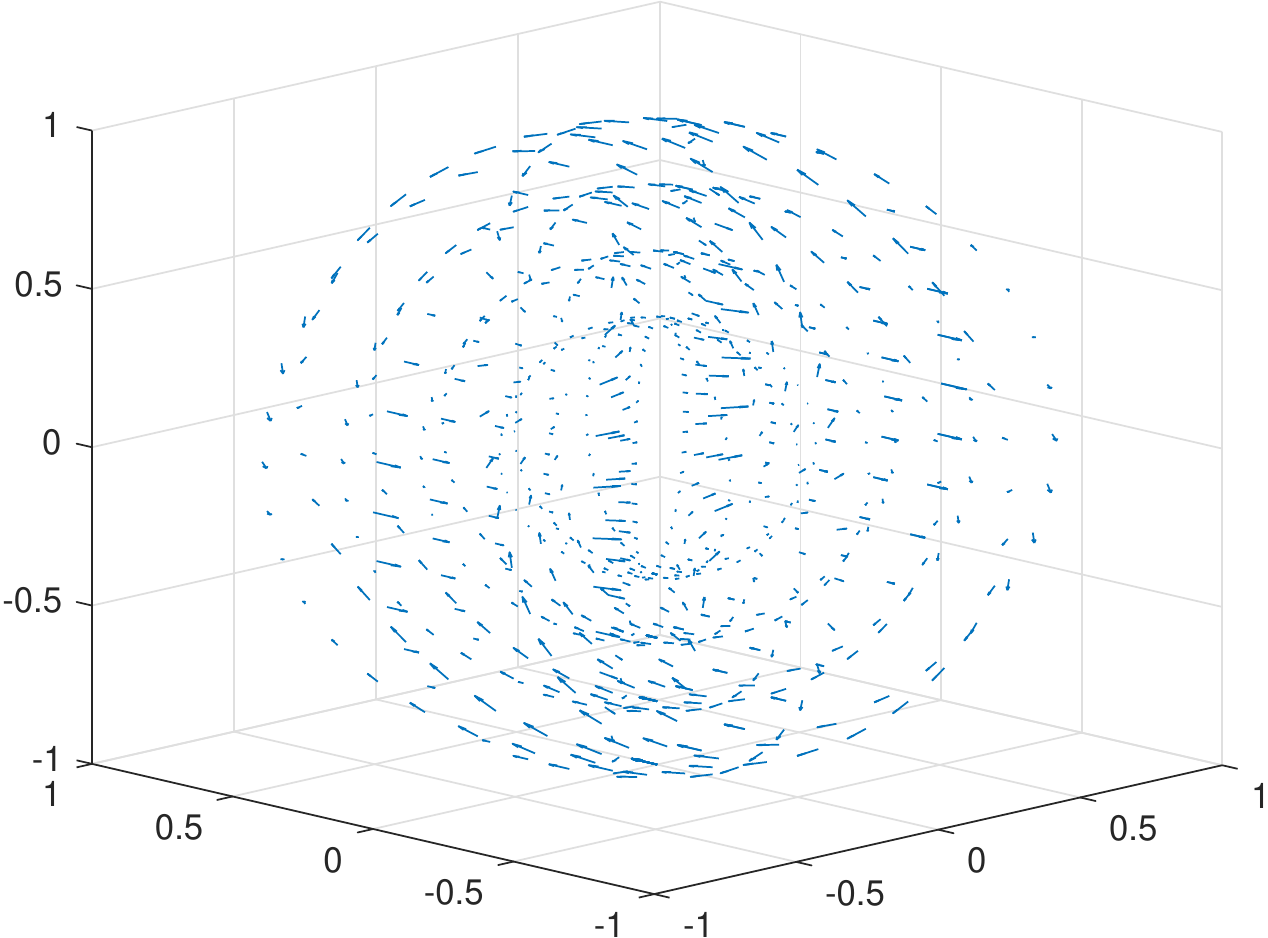}}
  \caption{$\bs\psi_{k,\ell}^{\af,n}(\bs x;c)$ with $c=2$.}
\end{figure}
\begin{figure}\label{fg52}
  \centering
  \subfigure[$(\af, n, k,\ell)=(1,1,0,2).$]{
    \includegraphics[width=2.7in]{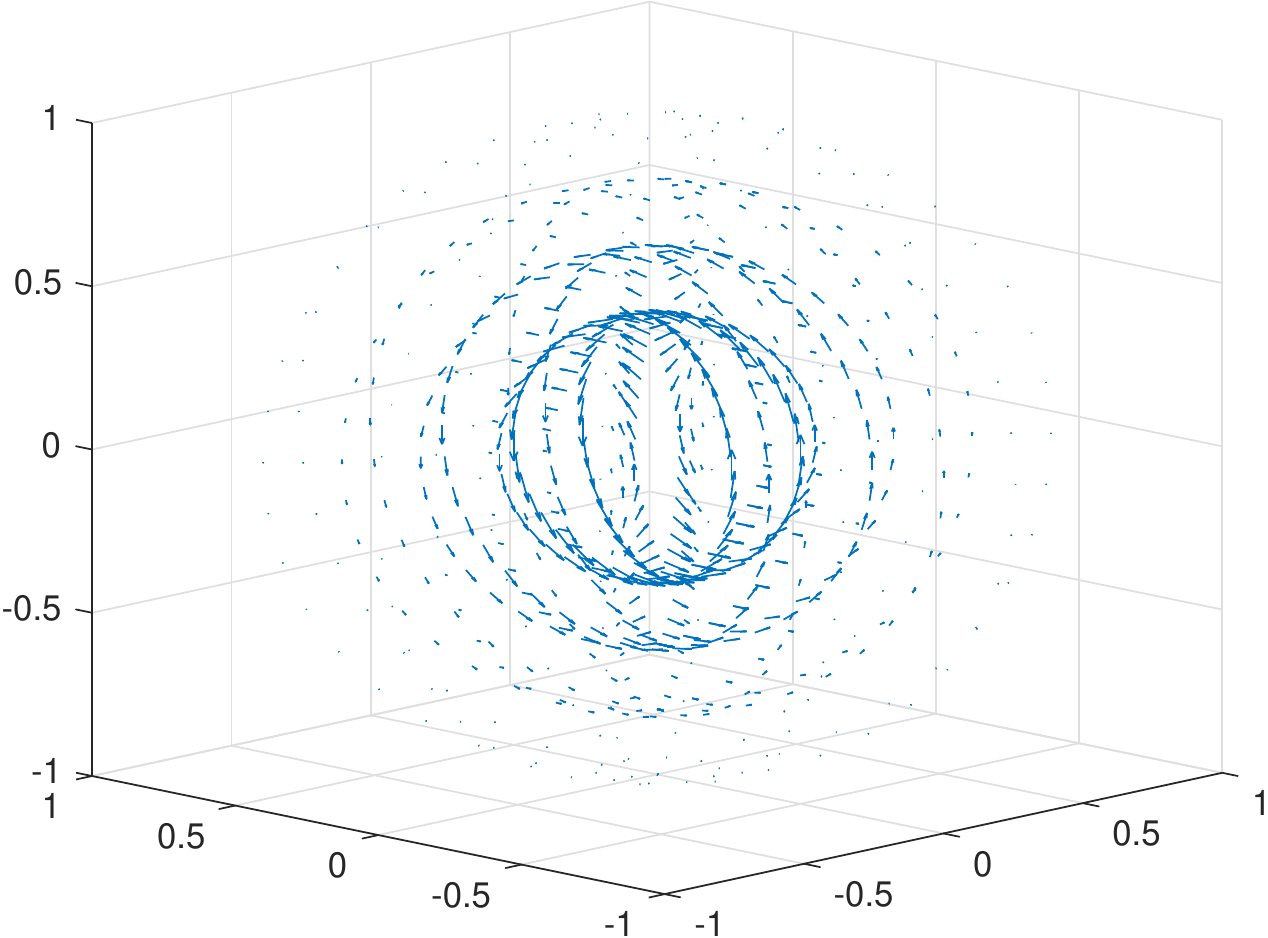}}
  \hspace{0in}
  \subfigure[$(\af, n, k, \ell)=(1,1,1,2).$]{
    \includegraphics[width=2.7in]{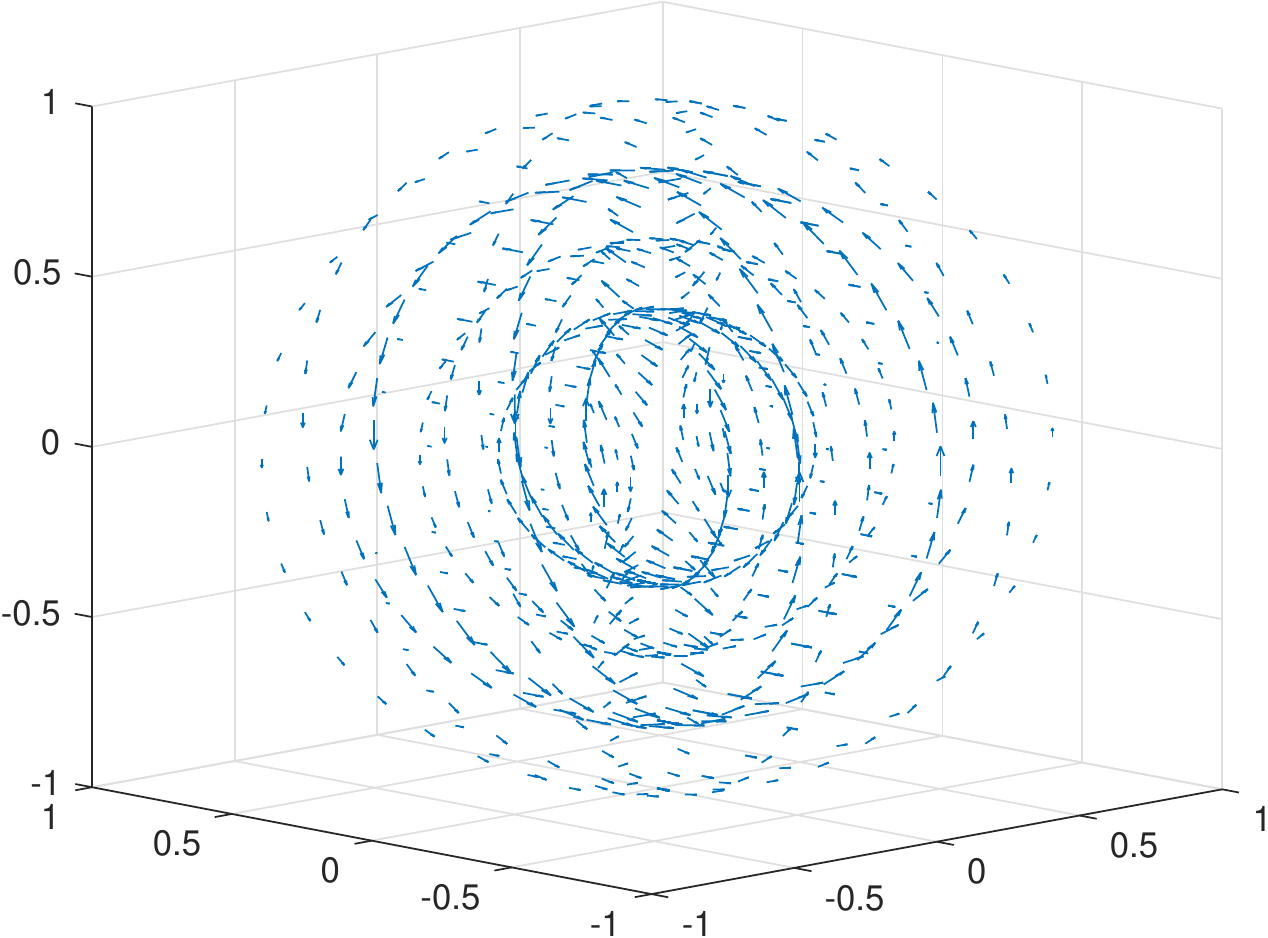}}
  \vfill
  \subfigure[$(\af, n, k, \ell)=(1,2,1,2).$]{
    \includegraphics[width=2.7in]{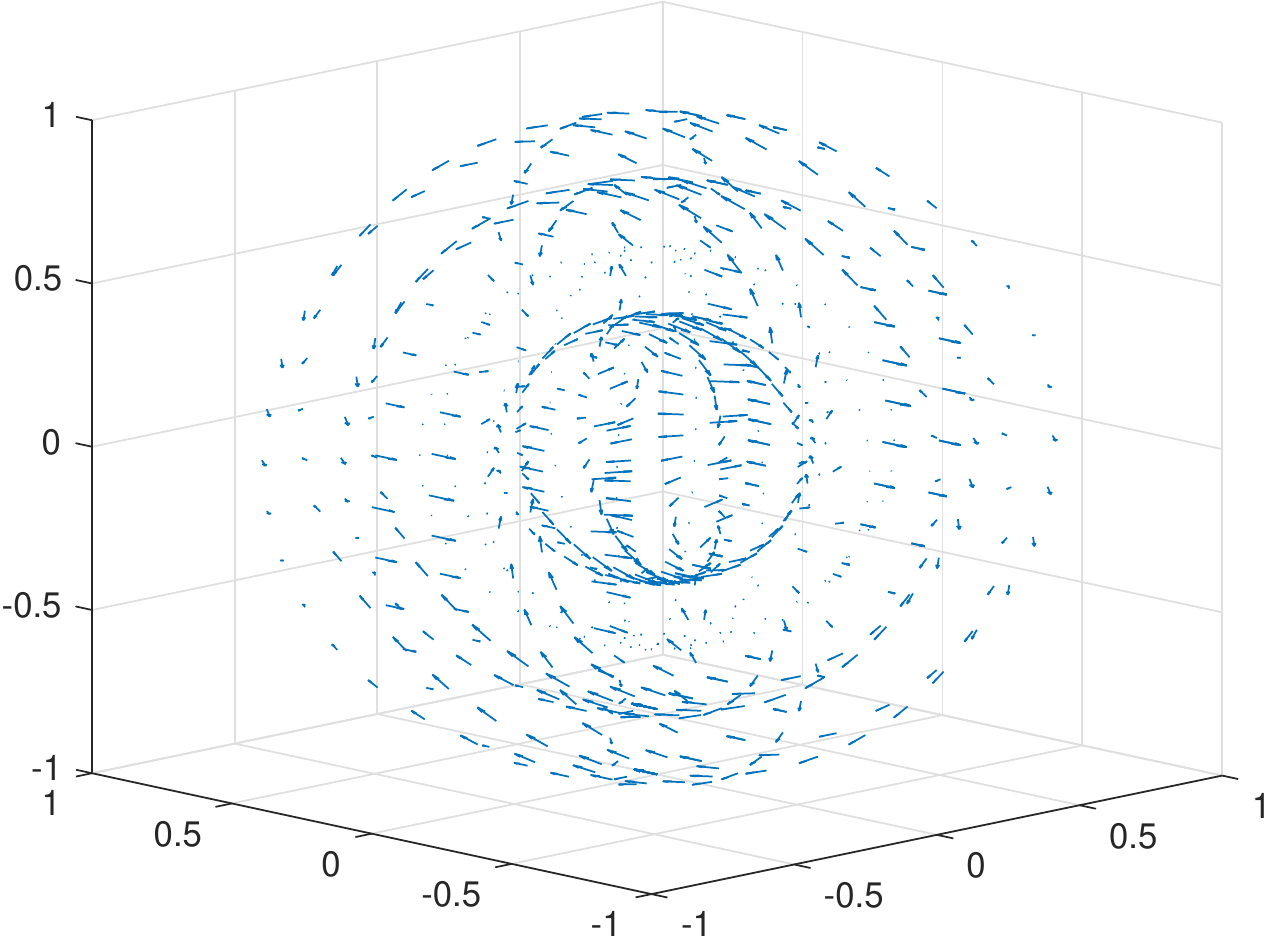}}
  \hspace{0in}
  \subfigure[$(\af, n, k, \ell)=(1,2,2,1).$]{
    \includegraphics[width=2.7in]{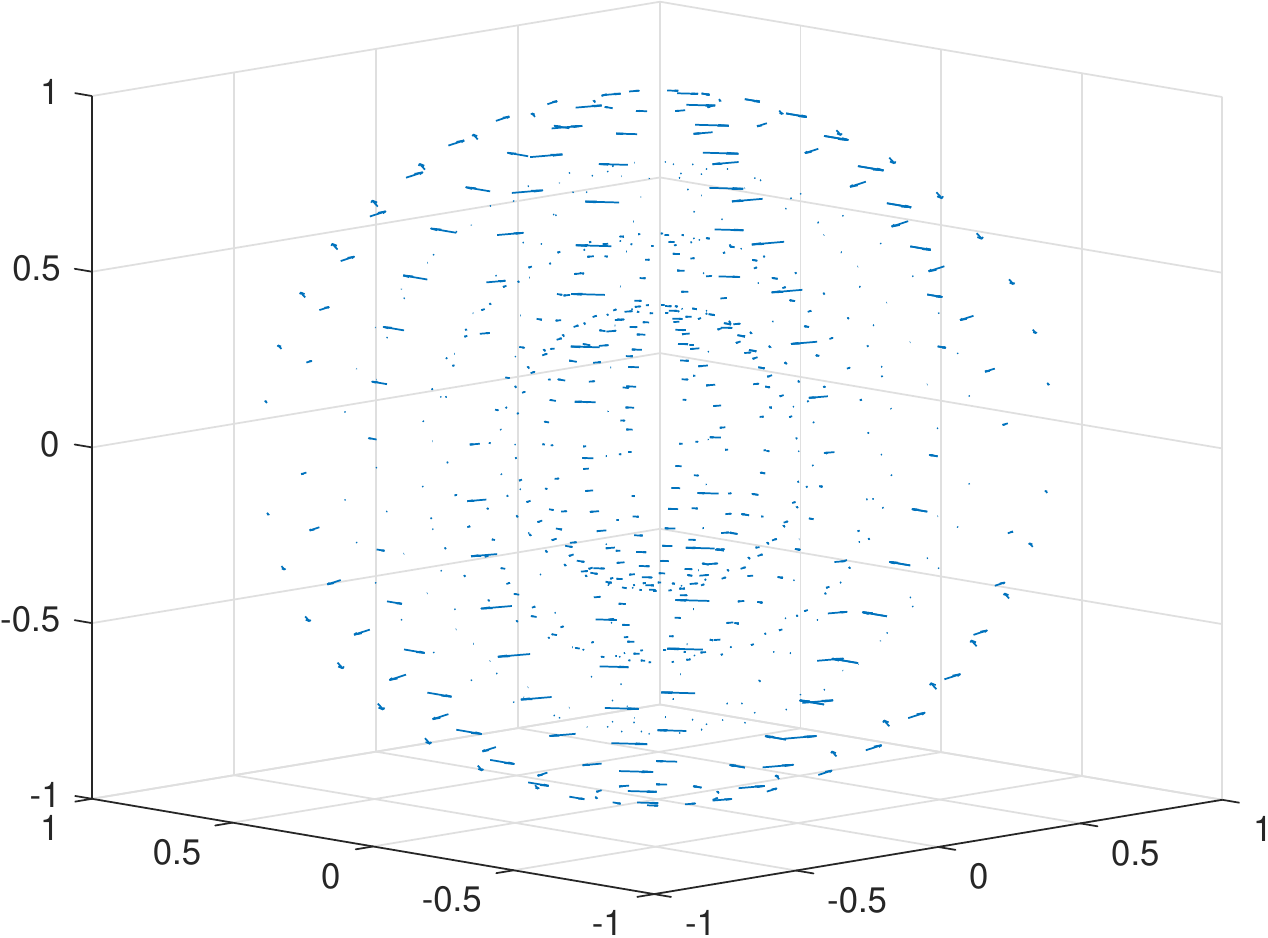}}
  \caption{Eigenfunctions  ${\bs\psi}_{k,\ell}^{\af,n}(\bx;c)$ with $c=10$.}
\end{figure}

\begin{appendix}
\renewcommand{\theequation}{A.\arabic{equation}}

\section{Proofs of Lemma \ref{calculus} and Lemma \ref{calculus2}}
\label{AppFirst}
\begin{proof}[Proof of Lemma \ref{calculus}]
Since $r$ and $\nabla_0$ commute,  one has
 \begin{align*}
  \nabla_0 \hat {\bs x}^{\tr}  = \frac{1}{r} \nabla_0 {\bs x}^{\tr}
  \overset{\eqref{opert-1b}} = \nabla  {\bs x}^{\tr}  -  \hat {\bs x} \partial_r  {\bs x}^{\tr}
  =   {\bs I}  -  \hat {\bs x}  \hat {\bs x} ^{\tr},
 \end{align*}
 and
\begin{align*}\hat {\bs x}\times \nabla_0 = {\bs x}\times \frac{1}{r} \nabla_0 \overset{\eqref{opert-1b}} =  {\bs x}\times (\nabla- \hat {\bs x} \partial_r)
=  {\bs x}\times \nabla =  -\nabla \times {\bs x},
\end{align*}
 which  verifies  
 \eqref{opert-1D}.

In analogy to   $ {\bs a} \cdot ({\bs a}\times {\bs b}) =  {\bs b} \cdot ({\bs a}\times {\bs b})=0$, one readily finds
\begin{align*}
\nabla \cdot( {\bs x}\times \nabla)  
= {\bs x} \cdot( {\bs x}\times \nabla)  = 0,
\end{align*}
which  proves \eqref{opert-1d}.

By a technical reduction, one gets
 \begin{align*}
 &({\bs x}\times \nabla)\cdot  ({\bs x}\times \nabla) =  \sum_{1\le i<j\le 3} (x_i\partial_{x_j}-x_j\partial_{x_j})^2
 = \|\bx\|^2 \Delta - (\bx\cdot \nabla )^2 -\bx\cdot \nabla \overset{\eqref{Lap-Bel}} = \Delta_0,
 \end{align*}
 which shows \eqref{opert-1j}.

 It is obvious that
\begin{align}
&(\bx \cdot \nabla )\, (\bx \times \nabla ) \overset{\eqref{opert-1D}}= r\partial_r  (\hat \bx \times \nabla_0 )
= (\hat \bx \times \nabla_0 )r\partial_r  \overset{\eqref{opert-1D}}=    (\bx \times \nabla )  \times (\bx \cdot \nabla ),
\label{commute1}
\\
&\|\bx \|^2  (\bx \times \nabla )  \overset{\eqref{opert-1D}}=r^2  (\hat \bx \times \nabla_0 ) =  (\hat \bx \times \nabla_0 )r^2
\overset{\eqref{opert-1D}}= (\bx \times \nabla ) \|\bx \|^2.
\label{commute2}
\end{align}
Meanwhile, It is straight forward that
\begin{align*}
& \Delta  (x_i \partial_{x_j} - x_j \partial_{x_i} )  = [(x_i \partial_{x_j})  \Delta +  \partial_{x_i} \partial_{x_j}]
  - [(x_j \partial_{x_i})  \Delta +  \partial_{x_j} \partial_{x_i}] = (x_i \partial_{x_j}-x_j \partial_{x_i}) \Delta,
\end{align*}
which shows
 \begin{align}
 \label{commute3}
& \Delta  (\bx \times \nabla )= (\bx \times \nabla )\Delta  .
\end{align}
As a result,
\begin{align*}
\Delta_0 (\bx \cdot \nabla ) \overset{\eqref{Lap-Bel}}= &\,\|\bx\|^2  [\Delta -(\bx \cdot  \nabla )(\bx \cdot \nabla+1 )  ] (\bx \times \nabla )
\\
=\ \ &  (\bx \times \nabla )  \|\bx\|^2  [\Delta -(\bx \cdot  \nabla )(\bx \cdot \nabla+1 )  ]
\overset{\eqref{Lap-Bel}}=(\bx \cdot \nabla )\Delta_0,
\end{align*}
which reveals \eqref{D0xn}.

The proof is now completed.
\end{proof}

\begin{proof}[Proof of Lemma \ref{calculus2}]
The well-known identity  on the cross products ${\bs a}\times {\bs b}\times {\bs c} ={\bs b} ({\bs a} \cdot  {\bs c})- {\bs c}({\bs a} \cdot {\bs b} ) $
yields
\begin{align*}
&\bx \times ({\bs x}\times \nabla ) ={\bx} ({\bs x} \cdot \nabla )- \|{\bs x}\|^2 \, \nabla \overset{\eqref{opert-1b}}= -r \nabla_0,
\end{align*}
which gives  \eqref{opert-1g}.  In return,
\begin{align*}
\nabla \cdot [\bx \times( {\bs x}\times \nabla)]  \overset{\eqref{opert-1g}}=  -\nabla \cdot r \nabla_0  \overset{\eqref{opert-1b}}= - (\frac1r \nabla_0+\hat{\bs x} \partial_r) \cdot r \nabla_0   \overset{\eqref{opert-1a}}= -\Delta_0 ,
\end{align*}
which is exactly \eqref{opert-1h}.

 Applying the  identity  on the  double curl operator $\nabla \times \nabla  \times {\bs f} =\nabla (\nabla  \cdot  {\bs f})- \Delta {\bs f} $, one obtains
 \begin{align*}
 \begin{split}
\nabla  \times&( {\bs x}\times \nabla) = -\nabla \times \nabla\times  {\bs x}  =  \Delta {\bx}- \nabla \Div  {\bs x}
 \\
 =&\, {\bx} \Delta  +2 \nabla  - ({\bx}  \cdot \nabla+4) \nabla
  = {\bx} \Delta   - ({\bx}  \cdot \nabla+2) \nabla ,
 \end{split}
 \end{align*}
 which states \eqref{opert-1E}. In the sequel,
 \begin{align*}
\Div    \nabla \times {\bs x}\times \nabla = \Div   {\bx} \Delta   - \Div     ({\bx}  \cdot \nabla+2) \nabla
=  ( {\bx}\cdot \nabla+3 \Delta  ) \Delta -  ({\bx}  \cdot \nabla+3) \Delta =0,
 \end{align*}
 and
 \begin{align*}
{\bx} \cdot   \nabla \times {\bs x}\times \nabla = {\bx} \cdot   {\bx} \Delta   - {\bx} \cdot    ({\bx}  \cdot \nabla+2) \nabla
=  \| \bx \|^2  \Delta   - {\bx} \cdot  \nabla  ({\bx}  \cdot \nabla+1) \overset{\eqref{Lap-Bel}} = \Delta_0,
 \end{align*}
 which gives \eqref{opert-1i} and \eqref{opert-1c}, respectively.

This  ends the proof.
\end{proof}

\renewcommand{\theequation}{B.\arabic{equation}}

%

\end{appendix}

\bibliographystyle{plain}
\bibliography{vball}

\end{document}